\documentclass[11pt]{amsart}

\usepackage{amsmath,amssymb,amsthm,mathtools}
\usepackage{graphicx}
\usepackage{float}
\usepackage[colorlinks=true,linkcolor=blue,citecolor=blue,urlcolor=blue]{hyperref}
\usepackage{tikz}
\usetikzlibrary{arrows.meta,calc,positioning}
\usepackage{pgfplots}
\pgfplotsset{compat=1.18}

\newtheorem{theorem}{Theorem}[section]
\newtheorem{proposition}[theorem]{Proposition}
\newtheorem{lemma}[theorem]{Lemma}
\newtheorem{corollary}[theorem]{Corollary}
\theoremstyle{definition}
\newtheorem{definition}[theorem]{Definition}
\newtheorem{remark}[theorem]{Remark}

\newtheorem{question}[theorem]{Question}

\newcommand{\RR}{\mathbb{R}}
\newcommand{\diam}{\operatorname{diam}}
\newcommand{\len}{\operatorname{len}}
\newcommand{\im}{\operatorname{im}}
\newcommand{\Hg}{\mathcal{H}}
\newcommand{\Poly}{\mathcal{P}}
\newcommand{\eps}{\varepsilon}
\newcommand{\Rg}{R_g}
\newcommand{\sD}{\mathsf{D}}
\newcommand{\Thi}{\operatorname{Thi}}
\newcommand{\Rop}{\operatorname{Rop}}

\title[Unconstrained and ropelength-windowed $p$-densities]{Unconstrained and Ropelength-Windowed $p$-densities of Knot Types}

\author{Makoto Ozawa}
\address{Department of Natural Sciences, Komazawa University, Tokyo, Japan}
\email{w3c@komazawa-u.ac.jp}

\subjclass[2020]{Primary 57K10; Secondary 49Q10, 49Q20, 53A04}
\keywords{knot type, $p$-density, ropelength, thickness, mean-chord inequality, chord stretching, convex plane curve, polygonal knot, diameter, radius of gyration}

\begin{document}
\raggedbottom

\begin{abstract}
We study scale-invariant $p$-densities of knot types in $\RR^3$, defined as length divided by an $L^p$-type spread of pairwise chord lengths.  The unconstrained density is independent of the knot type for every $p\in(-1,\infty]$.  For $-1<p\le2$ its common value is
\[
\rho_p(K)=
\pi\left(
\frac{\pi}{\int_0^\pi \sin^p\theta\,d\theta}
\right)^{1/p},
\]
with the logarithmic value $\rho_0(K)=2\pi$, while $\rho_\infty(K)=2$.  For finite $p>2$, the common value reduces to a planar convex extremal problem; comparison with the doubly covered segment gives the explicit upper bound $3.5720244135\ldots$ for the circle-breaking exponent and rules out circle optimality above it.

We then impose the scale-invariant window $\Rop(\gamma)\le\lambda\Rop(K)$.  The resulting density has minimizers and detects the unknot throughout the sharp mean-chord range:
\[
\rho^{\operatorname{rop}}_{p,\lambda}(K)=c_p
\quad\Longleftrightarrow\quad K=U
\qquad (-1<p\le2,\ \lambda\ge1),
\]
where $c_p$ is the circle constant displayed above and $c_0=2\pi$.  We also prove
\[
\lim_{\lambda\to\infty}\rho^{\operatorname{rop}}_{p,\lambda}(K)=\rho_p(K),
\]
as well as monotonicity and continuity of the windowed spectrum on $(-1,\infty]$, right-continuity in the window parameter, universal strict fixed-edge polygonal bounds, and polygonal approximation for the unconstrained densities.
\end{abstract}

\maketitle

\section{Introduction}

The purpose of this paper is to study scale-invariant quantities that compare the length of a knot representative with the spatial spread of the curve.  If $\gamma\subset \RR^3$ is a tame $C^1$ embedded closed curve and $L=\len(\gamma)$, we define, for $p\in(-1,\infty)\setminus\{0\}$,
\[
\sD_p(\gamma):=
\left(
\frac{1}{L^2}\int_\gamma\int_\gamma |x-y|^p\,ds_x\,ds_y
\right)^{1/p}.
\]
For $p=0$ we take the corresponding geometric mean; Proposition~\ref{prop:pzero-limit} shows that this is exactly the continuous extension of the $p$-means as $p\to0$.  For $p=\infty$ we set $\sD_\infty(\gamma)=\diam(\gamma)$.  The associated unconstrained density of a knot type $K$ is
\[
\rho_p(K):=
\inf_{\gamma\in\Hg(K)}\frac{\len(\gamma)}{\sD_p(\gamma)},
\]
where $\Hg(K)$ denotes the set of tame $C^1$ representatives of $K$.  Throughout the paper, knot types are ambient isotopy classes of tame embeddings in $\RR^3$, and all continuous representatives are embedded in $\RR^3$.

At first sight, this construction appears to provide a one-parameter family of geometric knot invariants.  The case $p=2$ is related to the radius of gyration, a standard measure of spatial spread in polymer physics, while the endpoint $p=\infty$ is the diameter density.  The negative range $-1<p<0$ is a mildly singular distance-average regime, adjacent to the more strongly singular inverse-power kernels appearing in knot-energy theory.

The main observation of the present paper is that the unconstrained invariant is too flexible to detect knotting in a robust way.  A knot may be tied inside an arbitrarily small ball on a nearly optimal curve for the unknot.  This local knotting construction changes both length and $\sD_p$ by an arbitrarily small amount.  Consequently the unconstrained density of any knot type is bounded above by the corresponding density of the unknot.

\begin{theorem}[Local knotting degeneration]
For every $p\in(-1,\infty]$ and every knot type $K$,
\[
\rho_p(K)\le \rho_p(U),
\]
where $U$ denotes the unknot.
\end{theorem}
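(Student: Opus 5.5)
The plan is to show that for every $\eps>0$ and every unknotted $C^1$ curve $\gamma_0\in\Hg(U)$ there is a representative $\gamma_\eps\in\Hg(K)$ with
\[
\frac{\len(\gamma_\eps)}{\sD_p(\gamma_\eps)}\le \frac{\len(\gamma_0)}{\sD_p(\gamma_0)}+\eps .
\]
Taking the infimum over $\gamma_0$ and letting $\eps\to0$ then gives $\rho_p(K)\le\rho_p(U)$. The representative is built by local knotting. Fix a point of $\gamma_0$ where the curve is nearly straight; after a $C^1$-small modification we may assume $\gamma_0$ is an exact straight segment inside a small ball $B_\delta$. Fix once and for all a $C^1$ tangle $T$ in the unit ball that meets the boundary at antipodal points, is straight near them, and realizes the knot type $K$ when closed by an unknotted arc outside. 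Scale $T$ by $\delta$ and replace the segment $\gamma_0\cap B_\delta$ by it. This yields $\gamma_\delta\in\Hg(K)$ with $\len(\gamma_\delta)=\len(\gamma_0)+\delta(\len T-2)$. The connected sum of $U$ with $K$ is $K$, and the gluing can be done $C^1$ and embedded because $T$ is straight near its endpoints and $\gamma_0$ meets $B_\delta$ only in the segment for small $\delta$.

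Next I would show that $\sD_p(\gamma_\delta)\to\sD_p(\gamma_0)$ as $\delta\to0$. Parametrize both curves by arclength, with $L_\delta=\len(\gamma_\delta)\to L_0$. Split the double integral into three pieces: pairs with both points outside the modified arc, pairs with exactly one point on it, and pairs with both points on it. The modified arcs have length $O(\delta)$.

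For $p=\infty$ the argument is direct. The diameter changes by at most $2\delta$, since every point of $\gamma_\delta$ lies within $\delta$ of $\gamma_0$ and conversely.

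For $0<p<\infty$, $|x-y|^p$ is bounded on a bounded region, so the pairs involving the small arc contribute $O(\delta)$. The outside pairs coincide with those of $\gamma_0$ up to a set of measure $O(\delta)$. Hence the normalized integral converges.

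For $p=0$ the integrand is $\log|x-y|$, and for $-1<p<0$ it is $|x-y|^p$; both are singular on the diagonal. I expect this to be the main obstacle: one must make sure the crowded knotted arc does not make $\iint|x-y|^p$ blow up. I would handle it with a uniform local integrability estimate. For a $C^1$ embedded curve $\gamma$ there are constants $c,r>0$ such that the arclength measure of $\gamma\cap B(x,t)$ is at most $ct$ for all $x$ and all $t$. Hence
\[
\sup_x\int_\gamma |x-y|^p\,ds_y<\infty \qquad\text{for } p>-1,
\]
and similarly with $|\log|x-y||$ in place of $|x-y|^p$. For the scaled tangle $\delta T$ this bound scales correctly. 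Its self-interaction satisfies $\iint_{\delta T}|x-y|^p=\delta^{2+p}\iint_T|x-y|^p$, which tends to $0$ because $2+p>1>0$. For the logarithm the term is $\delta^2\log\delta\cdot(\len T)^2+\delta^2\iint_T\log|x-y|$, which also tends to $0$.

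The cross terms between the tangle and the rest need the linear mass bound to hold uniformly in $\delta$ at scales above $\delta$, which holds because outside $B_\delta$ the curve is $\gamma_0$. This gives
\[
\int_{\delta T}\int_{\gamma_\delta\setminus\delta T}|x-y|^p\le \len(\delta T)\cdot \sup_x\int |x-y|^p=O(\delta).
\]
The removed segment's contributions go to $0$ the same way. The $p$-th root, or the exponential in the case $p=0$, is continuous at the positive limit $\sD_p(\gamma_0)>0$. So the ratio converges, and choosing $\delta$ small gives the $\eps$-estimate.
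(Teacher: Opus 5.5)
Your proof is correct, and its overall construction matches the paper's. The paper (Theorem~\ref{thm:local-knotting}) also inserts a fixed scaled knotted-arc pattern with straight collars into a small ball. It then splits the double integral into common, cross, and self-interaction terms (Lemma~\ref{lem:localized-replacement}). The paper states the connected-sum form \(\rho_p(K\mathbin{\#}J)\le\rho_p(K)\); your argument gives this verbatim, since it never uses that \(\gamma_0\) is unknotted except in the knot-type bookkeeping.

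The genuine difference is in the singular range \(-1<p\le 0\).

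\emph{The paper's route.} It first proves a uniform local chord-arc estimate for the \emph{modified} curves (Lemma~\ref{lem:scaled-pattern-chord-arc}). This needs separate control of the exterior, the scaled model, and the gluing neighbourhoods. The paper then bounds the self-interaction by \(\iint|s-t|^p\) and the endpoint cross terms by \(\iint (s+t)^p\). This has the advantage of feeding directly into the appendix machinery (uniform integrability plus Vitali) that the paper reuses elsewhere.

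\emph{Your route.} You compute the pattern's self-energy exactly by scaling, getting \(\delta^{2+p}\), respectively \(\delta^2\log\delta\). You bound the cross terms by \(\len(\delta T)\cdot\sup_x\int_{\gamma_0}|x-y|^p\,ds_y\). This only needs an upper linear-density bound for the \emph{fixed} curve \(\gamma_0\), so no chord-arc control across the junction is needed, which is cleaner.

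Two small clarifications on the cross terms:
\begin{itemize}
\item Your remark that the mass bound must hold ``uniformly in \(\delta\) at scales above \(\delta\)'' is unnecessary. Since \(\gamma_\delta\setminus\delta T\subset\gamma_0\), the bound for \(\gamma_0\) at all scales suffices.
\item The supremum must be taken over all centres \(x\in\RR^3\), because points of \(\delta T\) lie off \(\gamma_0\). This follows from the on-curve bound with the constant doubled.
\end{itemize}

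One step you leave unjustified is the preliminary ``\(C^1\)-small modification'' that makes \(\gamma_0\) straight in a ball. It too must change \(\len\) and \(\sD_p\) only slightly. For \(p\le0\) this needs a short argument, for example that \(C^1\) convergence to an embedded curve gives uniform chord-arc or linear-mass bounds. The paper passes over its analogous ``harmless \(C^1\) smoothing'' in the same way.
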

This is proved in the stronger connected-sum form in Theorem~\ref{thm:local-knotting}.

The degeneration is complete in the whole mean-chord range.  More precisely, the sharp mean-chord inequality for closed curves implies that the round circle maximizes $\sD_p(\gamma)/\len(\gamma)$ for $-1<p<0$ and $0<p\le2$.  Together with the logarithmic limit at $p=0$ and the local-knotting construction, this gives the following formula.

\begin{theorem}[Complete degeneration in the mean-chord range]
For every knot type $K$ and every $p\in(-1,2]$, one has
\[
\rho_p(K)=c_p,
\]
where, for $p\ne0$,
\[
c_p=
\pi\left(
\frac{\pi}{\int_0^\pi \sin^p\theta\,d\theta}
\right)^{1/p},
\]
and $c_0=2\pi$.  In particular, $\rho_2(K)=\sqrt2\pi$.
\end{theorem}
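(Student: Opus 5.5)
We prove the two inequalities $\rho_p(K)\ge c_p$ and $\rho_p(K)\le c_p$ separately. The lower bound holds for every knot type and rests only on the sharp mean-chord inequality. The upper bound is reduced to the unknot via local knotting.

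\emph{Lower bound.} We use the sharp mean-chord inequality for closed rectifiable curves, in the form quoted above: for $0<p\le 2$,
\[
\frac{1}{L^2}\int_\gamma\int_\gamma |x-y|^p\,ds_x\,ds_y\le \frac{1}{L^2}\int_C\int_C|x-y|^p,
\]
where $C$ is the round circle of length $L$. For $-1<p<0$ the inequality is reversed. In both ranges, raising to the power $1/p$ gives $\sD_p(\gamma)\le \sD_p(C)$. Since the inequality holds for every closed curve, it holds in particular for every $\gamma\in\Hg(K)$. It remains to compute $\len(C)/\sD_p(C)$. On the circle of radius $r$, parametrize by angle. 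The chord between angles $\alpha,\beta$ has length $2r|\sin((\alpha-\beta)/2)|$, so
\[
\sD_p(C)^p=(2r)^p\frac{1}{\pi}\int_0^\pi\sin^p\theta\,d\theta.
\]
Hence $\len(C)/\sD_p(C)=2\pi r/\sD_p(C)=\pi\bigl(\pi/\int_0^\pi\sin^p\theta\,d\theta\bigr)^{1/p}=c_p$. This gives $\rho_p(K)\ge c_p$ for $p\ne0$.

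For $p=0$ we use Proposition~\ref{prop:pzero-limit}. For a fixed curve, $\sD_0(\gamma)=\lim_{p\to0}\sD_p(\gamma)$, and the inequality $\sD_p(\gamma)\le\sD_p(C)$ passes to the limit. So we only need $\lim_{p\to 0}c_p=2\pi$. Write $I(p)=\int_0^\pi\sin^p\theta\,d\theta$. Then $\log c_p=\log\pi+\frac1p(\log\pi-\log I(p))$, and as $p\to0$ this tends to $\log\pi-I'(0)/\pi$. The standard evaluation $\int_0^\pi\log\sin\theta\,d\theta=-\pi\log2$ gives $\log c_0=\log\pi+\log2$, i.e.\ $c_0=2\pi$.

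\emph{Upper bound.} By the local knotting theorem (Theorem~\ref{thm:local-knotting}), $\rho_p(K)\le\rho_p(U)$. A round circle is a tame $C^1$ representative of $U$, so $\rho_p(U)\le c_p$, and therefore $\rho_p(K)\le c_p$. At $p=2$ we have $\int_0^\pi\sin^2\theta\,d\theta=\pi/2$, which gives $c_2=\pi\sqrt2$.

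\emph{Main obstacle.} The real content lies in the two cited ingredients, not in this assembly. The sharp mean-chord inequality has to be available in exactly the range $-1<p\le 2$ with $p\neq 0$, with the direction reversed for negative $p$. The local knotting estimate has to be uniform, so that tying a small knot changes $\len$ and $\sD_p$ continuously. For $p<0$ this needs integrability of $|x-y|^p$ near the diagonal, which is where the condition $p>-1$ enters.
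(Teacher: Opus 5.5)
Your proof is correct and follows the paper's argument: the mean-chord inequality (Theorem~\ref{thm:mean-chord}) together with the round-circle computation gives the lower bound, and local knotting (Theorem~\ref{thm:local-knotting}) together with the round circle in the unknot class gives the upper bound. The one variation is at $p=0$, where you obtain $\sD_0(\gamma)\le \len(\gamma)/(2\pi)$ by letting $p\to0$ via Proposition~\ref{prop:pzero-limit}, whereas the paper uses its Jensen-plus-$p=2$ argument; your route suffices for the value $c_0=2\pi$, but unlike the paper's it does not by itself yield the $p=0$ equality case used later for attainment and unknot detection.
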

See Theorem~\ref{thm:mean-chord-degeneration} and Corollary~\ref{cor:mean-chord-attainment}.

At the diameter endpoint, the elementary inequality $\len(\gamma)\ge2\diam(\gamma)$ gives the lower bound, and an almost doubled segment with a localized knot gives the upper bound.

\begin{theorem}[Complete degeneration at $p=\infty$]
For every knot type $K$,
\[
\rho_\infty(K)=2.
\]
Moreover, the infimum is not attained by any embedded $C^1$ simple closed curve.
\end{theorem}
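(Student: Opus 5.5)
The argument has three parts: a lower bound, an upper bound by an explicit family of curves, and an analysis of the equality case.

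\emph{Lower bound.} For any closed rectifiable curve $\gamma$ and any $x,y\in\gamma$, the two arcs of $\gamma$ joining $x$ to $y$ each have length at least $|x-y|$. Hence
\[
\len(\gamma)\ge 2|x-y| .
\]
Taking the supremum over pairs gives $\len(\gamma)\ge 2\diam(\gamma)$, and therefore $\rho_\infty(K)\ge 2$ for every knot type.

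\emph{Upper bound.} Fix $\eps>0$ and build a tame $C^1$ representative of $K$ as follows. Start with a thin ``stadium'' curve: two parallel segments of length $1$ at distance $\delta$, joined by small semicircular caps. This is an unknot with length at most $2+\pi\delta$ and diameter at least $1$. Inside a ball of radius $\delta$ about the midpoint of one straight segment, replace a short subarc by a small $C^1$ tangle realizing $K$, as in the local knotting construction (Theorem~\ref{thm:local-knotting}, connected sum with $U$). The tangle adds length $O(\delta)$, since a scaled copy of a fixed pattern has length proportional to its scale, and it leaves the diameter at least $1$. The ratio is then at most $2+C\delta$, and letting $\delta\to0$ gives $\rho_\infty(K)\le2$. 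The only care needed is to keep the tangle disjoint from the other strand and to smooth the junctions in $C^1$ fashion. Both are easy if the tangle is placed on the outer side of its strand, away from the parallel strand, at scale much smaller than $\delta$.

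\emph{Non-attainment.} This is the step I expect to carry the real content. Suppose $\len(\gamma)=2\diam(\gamma)$, and pick $x,y$ realizing the diameter, which exist by compactness. Both arcs from $x$ to $y$ have length at least $|x-y|=\diam(\gamma)$, and their lengths sum to $2\diam(\gamma)$. So each arc has length exactly $|x-y|$. An arc whose length equals the distance between its endpoints is the straight segment $[x,y]$. Thus both arcs coincide with the same segment, and $\gamma$ is not injective, contradicting that $\gamma$ is an embedded simple closed curve. (At $x$ the curve also reverses direction, which is incompatible with a $C^1$ regular parametrization, but simplicity alone already suffices.) So no embedded representative attains the value $2$, for any $K$.
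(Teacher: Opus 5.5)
Your proposal is correct and follows essentially the paper's own proof: the same bound $\len(\gamma)\ge2\diam(\gamma)$, the same thin stadium with a scaled knotted pattern inserted on one strand, and the same doubled-segment contradiction for non-attainment. The only difference is normalization: you fix the strand length at $1$ and shrink the width $\delta$, which up to scaling is the paper's limit $R\to\infty$, $\eps/R\to0$, and you use only the lower bound $\diam\ge1$ (all that is needed), whereas the paper also bounds the diameter from above.
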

This is Theorem~\ref{thm:infty-degeneration}.

The finite range $p>2$ is governed by a different extremal argument.  Sallee's chord-stretching theorem replaces every closed space curve by a convex plane curve of the same length while weakly increasing every corresponding chord.  It was used by Abrams--Cantarella--Fu--Ghomi--Howard to show that the integrated $L^p$-mean chord functional has a maximizer and that every maximizer is convex and planar.  Write
\[
A_p(K):=\sup_{\gamma\in\Hg(K)}
\frac{\sD_p(\gamma)}{\len(\gamma)},
\qquad
A_p^{\mathrm{all}}:=\sup_\gamma
\frac{\sD_p(\gamma)}{\len(\gamma)},
\]
where the second supremum is over all embedded closed curves.  Combining convexification with local knotting gives the following result.

\begin{theorem}[Complete finite high-exponent degeneration]
For every knot type $K$ and every finite $p>2$,
\[
A_p(K)=A_p(U)=A_p^{\mathrm{all}},
\qquad
\rho_p(K)=\rho_p(U)=\frac{1}{A_p^{\mathrm{all}}}.
\]
In particular, the unconstrained $p$-density is independent of the knot type.
\end{theorem}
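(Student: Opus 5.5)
We argue by a chain of inequalities $A_p(K)\le A_p^{\mathrm{all}}\le A_p(U)\le A_p(K)$. The first is trivial: every representative of $K$ is an embedded closed curve. The remaining two inequalities come from convexification (the second) and local knotting (the third). Once the chain closes, the density statement is immediate: $\rho_p(K)=\inf_\gamma \len(\gamma)/\sD_p(\gamma)=1/A_p(K)$, since $\sD_p(\gamma)/\len(\gamma)$ is positive and bounded. The bound is $\sD_p(\gamma)\le\diam(\gamma)\le\len(\gamma)/2$, so $A_p^{\mathrm{all}}\in(0,1/2]$ and the reciprocal makes sense.

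\emph{Convexification.} For $A_p^{\mathrm{all}}\le A_p(U)$, fix any embedded closed curve $\gamma$ and apply Sallee's chord-stretching theorem. It produces a convex plane curve $\tilde\gamma$ with $\len(\tilde\gamma)=\len(\gamma)$, parametrized by arclength compatibly with $\gamma$, such that $|\tilde\gamma(s)-\tilde\gamma(t)|\ge|\gamma(s)-\gamma(t)|$ for all $s,t$. Since $p>2>0$, the map $r\mapsto r^p$ is increasing, so $\sD_p(\tilde\gamma)\ge\sD_p(\gamma)$.

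If $\tilde\gamma$ is a genuine convex Jordan curve, it is an unknot, though possibly only Lipschitz rather than $C^1$. In that case we smooth it: the boundary of a small outer parallel body of the convex region is $C^{1,1}$ and convex, and $\len$ and $\sD_p$ change continuously under this approximation. This loses at most $\eps$ in the ratio.

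If $\tilde\gamma$ degenerates to a doubly covered segment, we instead approximate it by thin convex curves (stadia) around the segment. Chord lengths then converge uniformly under the arclength parametrization, so again the ratio drops by at most $\eps$. In either case $\sD_p(\gamma)/\len(\gamma)\le A_p(U)+\eps$. The previously cited Abrams--Cantarella--Fu--Ghomi--Howard argument supplies this convexification step directly.

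\emph{Local knotting.} For $A_p(U)\le A_p(K)$ we invoke Theorem~\ref{thm:local-knotting} in its connected-sum form. Given a $C^1$ unknot $\gamma$ and $\eps>0$, tie $K$ inside a tiny ball on $\gamma$, using an added arc of length at most $\delta$. The resulting curve $\gamma_\delta\in\Hg(K)$ satisfies $\len(\gamma_\delta)\to\len(\gamma)$ as $\delta\to0$. Moreover $\sD_p(\gamma_\delta)\to\sD_p(\gamma)$, because the double integral of $|x-y|^p$ with $p>0$ is continuous: the modified part has measure $O(\delta)$ and the integrand is bounded by $\diam^p$. Taking the supremum over $\gamma$ gives $A_p(U)\le A_p(K)$.

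The main obstacle is the convexification step, specifically the degenerate and nonsmooth cases of Sallee's output. We must make sure that the convex curve, or its approximants, is admissible in $\Hg(U)$. This requires choosing approximations that preserve the pairing of arclength parameters, so that uniform convergence of chord lengths yields convergence of $\sD_p$. We handle it with parallel-body smoothing, which keeps convexity and changes the length by exactly $2\pi r$.
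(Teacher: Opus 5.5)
Your proposal is correct and follows the paper's proof. Sallee's chord-stretching theorem, together with approximation of the possibly degenerate convex output, bounds $\sD_p/\len$ of every embedded curve by $A_p(U)$, and the local-knotting theorem (with the easy $p>0$ continuity) gives $A_p(U)\le A_p(K)$. Your outer-parallel-body smoothing is a concrete version of the paper's unspecified approximation step. It produces $C^{1,1}$ curves that need not be strictly convex, which is harmless since $\Hg(U)$ only requires $C^1$ embeddings. You assert that $\sD_p$ changes continuously without justifying it; the missing reason is that the $O(r)$ shift in arc-length coordinates gives the uniform convergence of parametrizations needed for Lemma~\ref{lem:appendix-Dp-continuity-positive}.
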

This is proved in Theorem~\ref{thm:finite-high-degeneration}.

Together with the preceding results, this yields complete knot-type degeneration throughout the entire parameter range.

\begin{corollary}[Complete unconstrained degeneration]
For every knot type $K$ and every $p\in(-1,\infty]$,
\[
\rho_p(K)=\rho_p(U).
\]
\end{corollary}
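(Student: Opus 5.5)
The corollary follows by splitting the parameter range $(-1,\infty]$ into three pieces, each covered by one of the theorems stated above. The fact that makes this work is that in every regime the common value is computed without reference to $K$. Applying that value to $K=U$ as well then identifies it with $\rho_p(U)$.

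\emph{The range $-1<p\le 2$.} The complete degeneration theorem in the mean-chord range gives $\rho_p(K)=c_p$ for every knot type $K$. Here $c_p$ is the explicit circle constant, with $c_0=2\pi$. This holds in particular for $K=U$, so $\rho_p(K)=c_p=\rho_p(U)$. At $p=0$ one should check that $\rho_0$ is defined through the geometric mean. The statement of that theorem includes $p=0$, with the value $c_0=2\pi$ coming from Proposition~\ref{prop:pzero-limit}, so no extra argument is needed.

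\emph{The endpoint $p=\infty$.} The degeneration theorem at $p=\infty$ gives $\rho_\infty(K)=2$ for every $K$. Taking $K=U$ gives $\rho_\infty(U)=2$, so the two values agree. The non-attainment part of that theorem is irrelevant here; only the value of the infimum matters.

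\emph{Finite $p>2$.} The complete finite high-exponent degeneration theorem states directly that $\rho_p(K)=\rho_p(U)=1/A_p^{\mathrm{all}}$.

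The three cases $(-1,2]$, $(2,\infty)$ and $\{\infty\}$ exhaust $(-1,\infty]$, which completes the argument. There is no real obstacle: the substantive work sits in the three cited theorems. These rest on the local knotting inequality $\rho_p(K)\le\rho_p(U)$ and on the reverse inequalities, namely the mean-chord inequality, $\len\ge 2\diam$, and Sallee convexification. The one point needing care is that the three cases are stated with consistent conventions, $\sD_0$ as the geometric mean and $\sD_\infty=\diam$, so that the theorems apply verbatim at the boundary values $p=0$, $p=2$ and $p=\infty$.
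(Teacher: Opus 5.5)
Your proposal is correct and matches the paper's proof. The paper also proves the corollary by combining the three degeneration theorems: the mean-chord range $-1<p\le 2$, the finite range $2<p<\infty$, and the endpoint $p=\infty$, each of which gives a value independent of $K$.
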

See Corollary~\ref{cor:complete-unconstrained-degeneration}.

For finite $p>2$, the common value is generally not explicit.  The remaining problem is to maximize $\sD_p(\gamma)/\len(\gamma)$ among convex plane curves.  This distinction is important: knot-type independence is complete, whereas the sharp planar convex extremizer may differ from the round circle.

Although the values collapse, attainment retains information.  For $-1<p\le2$, the infimum is attained exactly in the unknot class, by round circles; see Corollary~\ref{cor:mean-chord-attainment}.  At $p=\infty$ it is attained in no knot class.  Thus the invariant value forgets knotting, while the corresponding extremal problem need not.

For $2<p<\infty$, any representative that attains the common global value must itself be an unknot; a conditional completion of the attainment dichotomy is recorded in Remark~\ref{rem:high-p-attainment}.  Thus the invariant value forgets knotting throughout the full exponent range, while attainment can still retain geometric information.

Thus the correct interpretation of the unconstrained theory is not that it gives strong knot-type invariants, but rather that it reveals a degeneration mechanism.  This is not merely a defect: it identifies precisely what is missing from the variational problem.  To obtain a non-degenerate theory, one must prevent arbitrarily small local knotting.

We therefore introduce a ropelength-windowed refinement.  Let $\Thi(\gamma)$ denote thickness, let
\[
\Rop(\gamma):=\frac{\len(\gamma)}{\Thi(\gamma)},
\]
and let $\Rop(K)$ denote the ropelength of a knot type $K$.  For $\lambda\ge1$, define
\[
\rho^{\operatorname{rop}}_{p,\lambda}(K):=
\inf\left\{
\frac{\len(\gamma)}{\sD_p(\gamma)}\ \middle|\
\gamma\in\mathcal C^{1,1}(K),\
\Rop(\gamma)\le \lambda\Rop(K)
\right\}.
\]
Here $\mathcal C^{1,1}(K)$ denotes the class of embedded $C^{1,1}$ representatives of $K$.  After normalization to thickness one, the ropelength bound becomes a uniform thickness constraint together with a length cap.  These prevent both concentration of knotting at arbitrarily small scale and escape by excessive length.

The constrained problem has a direct compactness theory.  Using standard compactness for thick curves with bounded length, we prove the following existence result.

\begin{theorem}[Existence of ropelength-windowed minimizers]
Let $K$ be a knot type, $p\in(-1,\infty]$, and $\lambda\ge1$.  Then the infimum defining $\rho^{\operatorname{rop}}_{p,\lambda}(K)$ is attained by a $C^{1,1}$ representative satisfying
\[
\Rop(\gamma)\le\lambda\Rop(K).
\]
\end{theorem}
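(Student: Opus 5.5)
The plan is the direct method, run after normalizing the scale. The ratio $\len(\gamma)/\sD_p(\gamma)$ is scale invariant, and so is $\Rop$, so we may restrict to representatives with $\Thi(\gamma)=1$. Then the window $\Rop(\gamma)\le\lambda\Rop(K)$ becomes the length cap $\len(\gamma)\le\lambda\Rop(K)$. Two further normalizations are free: translate so that $\gamma$ passes through the origin, and parametrize by constant speed on $[0,1]$.

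The admissible set is nonempty, because ropelength minimizers exist (Cantarella--Kusner--Sullivan, Gonzalez--Maddocks--Schuricht--von der Mosel), and since $\lambda\ge1$ such a minimizer is admissible. Its value is finite and positive, so we can take a minimizing sequence $\gamma_n$.

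Next comes compactness. Thickness one means each $\gamma_n$ is $C^{1,1}$ with curvature bounded by $1$. A thick curve also has length at least $2\pi$ (in fact at least $\Rop(K)$). So the lengths $L_n$ lie in $[2\pi,\lambda\Rop(K)]$, and after passing to a subsequence $L_n\to L>0$. The constant-speed parametrizations $\gamma_n(t)$ are bounded in $C^{1,1}$, uniformly: $|\gamma_n'|=L_n$, $|\gamma_n''|\le L_n^2$, and they all pass through the origin. Arzel\`a--Ascoli then gives a subsequence converging in $C^1$ to a $C^{1,1}$ limit $\gamma$ with $|\gamma'|=L$, so $\len(\gamma)=\lim L_n$.

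Thickness is upper semicontinuous under $C^1$ convergence (the standard Gonzalez--Maddocks characterization through the global radius of curvature), so $\Thi(\gamma)\ge1$. In particular $\gamma$ is embedded. Since it is a $C^1$ limit of curves with uniform thickness, it is isotopic to $\gamma_n$ for large $n$: the tubes of radius $1/2$ carry the isotopy. Hence $\gamma\in\mathcal C^{1,1}(K)$, and $\Rop(\gamma)\le\liminf \Rop(\gamma_n)\le\lambda\Rop(K)$.

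It remains to show that $\sD_p$ passes to the limit. For $p=\infty$, the diameter is continuous under uniform convergence. For finite $p$, write
\[
\sD_p(\gamma_n)^p=\int_0^1\!\!\int_0^1|\gamma_n(s)-\gamma_n(t)|^p\,ds\,dt .
\]
For $p\ge0$ the integrands converge uniformly, or via $\log$ for $p=0$, away from the diagonal. The delicate part, which I expect to be the main obstacle, is the singular range $-1<p<0$ together with the log case near the diagonal. There I would use thickness to get a uniform bi-Lipschitz bound $|\gamma_n(s)-\gamma_n(t)|\ge c\,L_n|s-t|_{\mathrm{circ}}$ with $c$ independent of $n$; such a bound holds for curves of thickness $1$ and length at most $\lambda\Rop(K)$. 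The integrands are then dominated by $C|s-t|^p$, respectively $C+|\log|s-t||$, which is integrable for $p>-1$. Dominated convergence then gives $\sD_p(\gamma_n)\to\sD_p(\gamma)$, and the limit is positive.

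Hence $\len(\gamma)/\sD_p(\gamma)=\lim \len(\gamma_n)/\sD_p(\gamma_n)=\rho^{\operatorname{rop}}_{p,\lambda}(K)$. Scaling back gives a minimizer satisfying $\Rop(\gamma)\le\lambda\Rop(K)$.
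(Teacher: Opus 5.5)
Your proposal is correct and follows essentially the same route as the paper. You normalize to $\Thi=1$ with the length cap, extract a $C^1$-convergent subsequence by thick-curve compactness, keep $\Thi\ge1$ and the knot type via semicontinuity of thickness and the embedded normal tube, and pass $\sD_p$ to the limit using a uniform chord-arc bound supplied by thickness. The only difference is in the domination step: the paper uses the local Schur estimate $|\gamma_j(s)-\gamma_j(t)|\ge 2\sin(d_{\gamma_j}(s,t)/2)$ for $d_{\gamma_j}(s,t)\le1$, plus uniform separation away from the diagonal inherited from the embedded limit; you instead assert a global bi-Lipschitz bound for thick curves of bounded length, which is also true but needs a citation or a short argument.
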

This is Theorem~\ref{thm:rop-window-existence}.

The windowed theory is not merely compact.  In the range where the sharp circle inequality is available, it separates the unknot from every nontrivial knot type.

\begin{theorem}[Unknot detection by ropelength windows]
Let $p\in(-1,2]$ and $\lambda\ge1$.  Then
\[
\rho^{\operatorname{rop}}_{p,\lambda}(K)\ge c_p,
\]
and equality holds if and only if $K=U$.
\end{theorem}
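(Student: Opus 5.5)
The argument splits into three parts: a lower bound by inclusion, equality for the unknot via round circles, and strictness for nontrivial knots via attainment.

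\textbf{Lower bound.} The admissible class for $\rho^{\operatorname{rop}}_{p,\lambda}(K)$ consists of $C^{1,1}$ representatives of $K$, which are in particular tame $C^1$ representatives. Hence
\[
\rho^{\operatorname{rop}}_{p,\lambda}(K)\ge\rho_p(K)=c_p
\]
by the complete degeneration theorem in the mean-chord range (Theorem~\ref{thm:mean-chord-degeneration}). More directly, this is the sharp mean-chord inequality $\len(\gamma)/\sD_p(\gamma)\ge c_p$, valid for every closed curve when $-1<p\le 2$; for $p=0$ one uses its logarithmic limit via Proposition~\ref{prop:pzero-limit}.

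\textbf{Equality for the unknot.} The round circle $\gamma_0$ is the ropelength minimizer of $U$: it has thickness equal to its radius, so $\Rop(\gamma_0)=2\pi=\Rop(U)$. It therefore satisfies $\Rop(\gamma_0)\le\lambda\Rop(U)$ for every $\lambda\ge1$. Since $\gamma_0$ attains $c_p$ by direct computation of its chord integrals (Corollary~\ref{cor:mean-chord-attainment}), we get $\rho^{\operatorname{rop}}_{p,\lambda}(U)\le c_p$, and hence equality.

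\textbf{Strictness for $K\ne U$.} Suppose $K$ is nontrivial and $\rho^{\operatorname{rop}}_{p,\lambda}(K)=c_p$. By the existence theorem for ropelength-windowed minimizers (Theorem~\ref{thm:rop-window-existence}), there is a $C^{1,1}$ representative $\gamma$ of $K$ with $\Rop(\gamma)\le\lambda\Rop(K)$ and $\len(\gamma)/\sD_p(\gamma)=c_p$. The equality case of the sharp mean-chord inequality then forces $\gamma$ to be a round circle; this is the content of Corollary~\ref{cor:mean-chord-attainment}, which states that the infimum is attained only by round circles. A round circle is unknotted, contradicting $K\ne U$. Therefore $\rho^{\operatorname{rop}}_{p,\lambda}(K)>c_p$.

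\textbf{Main obstacle.} The delicate step is the rigidity statement: equality in the mean-chord inequality should hold only for round circles across the whole range $-1<p\le2$. This includes the negative range, where the kernel $|x-y|^p$ is singular, and the logarithmic case $p=0$. I would take this from Corollary~\ref{cor:mean-chord-attainment}. If that corollary were only stated for $C^1$ curves, the $C^{1,1}$ minimizer is covered anyway. At $p=2$ one should also check that the equality case is genuinely rigid. There $\sD_2^2=2\,\mathrm{mean}|x-\bar x|^2$, and equality in Wirtinger's inequality forces the coordinate functions to be first Fourier modes with unit speed. This gives a planar circle, since a space curve of this form with $|\gamma'|\equiv1$ must be a round circle.

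Compactness does the essential work. Without attainment, a sequence of knotted curves could approach $c_p$ while degenerating through local knotting, which is exactly what the ropelength window prevents.
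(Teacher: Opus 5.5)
Your proposal is correct and follows the paper's proof essentially step for step. The three steps match: the lower bound comes from including the admissible class in $\Hg(K)$ together with Theorem~\ref{thm:mean-chord-degeneration}; the unit circle is admissible for every $\lambda\ge1$ because $\Rop(U)=2\pi$, which the paper justifies via Fenchel's theorem in Lemma~\ref{lem:unknot-ropelength}; and the converse uses a windowed minimizer from Theorem~\ref{thm:rop-window-existence} combined with the strict equality case of Theorem~\ref{thm:mean-chord}. Routing the rigidity through Corollary~\ref{cor:mean-chord-attainment} rather than citing the equality statement directly is immaterial, since that corollary is proved from the same equality case.
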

This is Theorem~\ref{thm:rop-window-detects-unknot}.

The window parameter also gives a genuine exhaustion of the unconstrained problem.

\begin{theorem}[Exhaustion of the unconstrained density]
For every knot type $K$ and every $p\in(-1,\infty]$,
\[
\lim_{\lambda\to\infty}
\rho^{\operatorname{rop}}_{p,\lambda}(K)
=\rho_p(K).
\]
\end{theorem}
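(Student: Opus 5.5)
The plan is to prove two inequalities.

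The first is trivial: $\rho^{\operatorname{rop}}_{p,\lambda}(K)\ge\rho_p(K)$ for every $\lambda$. Indeed, every $C^{1,1}$ representative is in particular a tame $C^1$ representative, so the windowed infimum is taken over a subset of $\Hg(K)$. Moreover, $\lambda\mapsto\rho^{\operatorname{rop}}_{p,\lambda}(K)$ is nonincreasing, because the admissible class grows with $\lambda$. Hence the limit exists and is at least $\rho_p(K)$. It remains to show
\[
\lim_{\lambda\to\infty}\rho^{\operatorname{rop}}_{p,\lambda}(K)\le\rho_p(K).
\]

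For the reverse inequality, fix $\eps>0$ and choose $\gamma\in\Hg(K)$ with $\len(\gamma)/\sD_p(\gamma)<\rho_p(K)+\eps$. The target is a $C^{1,1}$ embedded curve $\tilde\gamma$ in the same knot type, with positive thickness and
\[
\len(\tilde\gamma)/\sD_p(\tilde\gamma)<\rho_p(K)+2\eps.
\]
Once we have it, $\Rop(\tilde\gamma)<\infty$, so $\tilde\gamma$ is admissible for every $\lambda\ge\Rop(\tilde\gamma)/\Rop(K)$. This gives $\rho^{\operatorname{rop}}_{p,\lambda}(K)<\rho_p(K)+2\eps$ for all large $\lambda$, and the theorem follows.

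To construct $\tilde\gamma$, I would approximate $\gamma$ in the $C^1$ topology by a smooth embedded curve, for instance by convolving an arclength parametrization with a mollifier. Embeddedness and the knot type are preserved for sufficiently $C^1$-close approximations: a $C^1$ embedded closed curve has a tubular-type neighborhood, so a $C^1$-small perturbation is isotopic to it. Alternatively, I would use the polygonal approximation result for the unconstrained densities stated in the abstract and then round the corners with small circular arcs. A smooth compact embedded curve is $C^{1,1}$ and has positive thickness, hence finite ropelength.

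The main step is continuity of $\len$ and $\sD_p$ under $C^1$ (or uniform-plus-length) convergence. Length is continuous in $C^1$. For $\sD_p$, parametrize both curves by normalized arclength on $[0,1]$, so that
\[
\sD_p^p=\iint|\gamma(s)-\gamma(t)|^p\,ds\,dt.
\]
For $p\ge0$, and for the $p=0$ logarithmic case, uniform convergence of the parametrizations together with boundedness gives convergence, except that for $p=0$ the logarithmic singularity on the diagonal needs the same care as negative $p$. For $-1<p<0$, the singularity on the diagonal is the main obstacle. We handle it using the bi-Lipschitz bound
\[
|\gamma(s)-\gamma(t)|\ge c\,|s-t|_{\mathrm{circ}},
\]
which holds uniformly along the approximating sequence because of $C^1$ convergence and embeddedness. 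This gives a uniform integrable majorant $c^p|s-t|^p$, which is integrable since $p>-1$, and dominated convergence applies. For $p=\infty$, the diameter is continuous under uniform convergence. Thus $\len(\tilde\gamma)/\sD_p(\tilde\gamma)$ can be made within $\eps$ of $\len(\gamma)/\sD_p(\gamma)$, which completes the argument.
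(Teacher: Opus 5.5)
Your proposal is correct and matches the paper's proof. The windowed infimum runs over a subclass of $\Hg(K)$ and is non-increasing in $\lambda$, so the limit exists and is at least $\rho_p(K)$; a smooth near-optimal representative has positive thickness and finite ropelength, so it is admissible once $\lambda\ge\Rop(\tilde\gamma)/\Rop(K)$. The only difference is how the smooth representative is produced: you use direct $C^1$ mollification with a uniform chord-arc domination argument, whereas the paper's Lemma~\ref{lem:smooth-restriction} uses polygonal approximation followed by rounding the corners, and either route works.
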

This is Theorem~\ref{thm:lambda-limit}.

The windowed spectrum is continuous also at the diameter endpoint $p=\infty$, and it is right-continuous as a function of the window parameter $\lambda$.  These endpoint and window-regularity statements are proved in Theorem~\ref{thm:window-p-continuity} and Proposition~\ref{prop:lambda-right-continuity}.

Finally, the polygonal approximation theorem from the unconstrained setting remains an important structural result.  If $\rho_{p,n}(K)$ denotes the infimum over embedded polygonal $n$-gons representing $K$, then
\[
\lim_{n\to\infty}\rho_{p,n}(K)=\rho_p(K).
\]
Thus the continuous unconstrained density is the limit of its discrete polygonal counterparts.  In the constrained setting, a corresponding thickness-controlled polygonal approximation problem is natural, but it requires a polygonal thickness model and is left as a further direction.

At every fixed edge number, however, compactness already gives a universal strict gap above the circle constant: for $p\in(0,2]$ there is a number $m_{p,n}>c_p$, independent of the knot type, such that $\rho_{p,n}(K)\ge m_{p,n}$ whenever an $n$-edge representative exists; see Proposition~\ref{prop:universal-polygon-gap}.

This paper is the analytic starting point of a group of related projects.  The companion paper \cite{OzawaGeometric} develops general density--compression factorizations and compression radii.  The lattice-filtered move-graph framework of \cite{OzawaMoveGraphs} and the discrete density and compression profiles of \cite{OzawaDiscreteDensity} concern finite-state lattice models and computational filtrations.  The present paper is distinguished by its continuous variational results: unconstrained degeneration, the sharp circle formula, ropelength-windowed unknot detection, and exhaustion as the window grows.  No discrete assertion from the companion papers is used here.

The paper is organized as follows.  Section~\ref{sec:def} defines the spread functional and proves monotonicity in the exponent.  Section~\ref{sec:degeneration} proves local-knotting degeneration, gives the explicit circle formula for $-1<p\le2$, treats the diameter endpoint, uses Sallee's theorem for finite $p>2$, and gives a rigorous doubled-segment obstruction to circle optimality.  Section~\ref{sec:rop-window} introduces the scale-free ropelength window, proves existence and unknot detection, establishes continuity in $p$ up to $p=\infty$ and right-continuity in $\lambda$, and shows convergence to the unconstrained invariant as $\lambda\to\infty$.  Section~\ref{sec:polygonal-approximation} proves polygonal approximation and the universal fixed-edge gap.  Section~\ref{sec:questions} records the remaining extremal and constrained problems.  The appendix collects the technical continuity and approximation lemmas.

\section{The spread functional and unconstrained densities}\label{sec:def}

\subsection{Admissible curves}

\begin{definition}
A \emph{continuous representative} of a knot type $K$ means a tame $C^1$ embedding
\[
\gamma\colon S^1\to \RR^3
\]
whose image represents $K$.
We denote by $\Hg(K)$ the set of all such representatives.
\end{definition}

\begin{remark}
Every $C^1$ embedded circle in $\RR^3$ is tame.  The adjective is retained only to emphasize that the knot types are the classical tame ambient-isotopy classes.
\end{remark}

\begin{remark}
Polygonal knots are not included in $\Hg(K)$, because they have corners.  They are used separately for the discrete densities.  This causes no loss in the comparison with the continuous theory: an embedded polygonal knot can be rounded inside arbitrarily small disjoint neighborhoods of its vertices, preserving the knot type and changing neither length nor the spread functional in the limit; see Lemma~\ref{lem:polygonal-smoothing-ratio}.
\end{remark}

\subsection{The spread functional}

\begin{definition}\label{def:spread}
Let $\gamma\in\Hg(K)$ and write $L=\len(\gamma)$.  For $p\in(-1,\infty)\setminus\{0\}$, define
\[
\sD_p(\gamma):=
\left(
\frac{1}{L^2}
\int_\gamma\int_\gamma |x-y|^p\,ds_x\,ds_y
\right)^{1/p}.
\]
For $p=0$, define
\[
\sD_0(\gamma):=
\exp\left(
\frac{1}{L^2}
\int_\gamma\int_\gamma \log|x-y|\,ds_x\,ds_y
\right).
\]
For $p=\infty$, define
\[
\sD_\infty(\gamma):=\diam(\gamma).
\]
\end{definition}

\begin{remark}
The restriction $p>-1$ ensures that the pairwise-distance integral is locally integrable near the diagonal for the $C^1$ and polygonal representatives considered here.
\end{remark}

\begin{remark}\label{rem:rectifiable-extension}
When no knot type is involved, the same formulas will also be used for a nonconstant rectifiable closed curve, understood as a Lipschitz arc-length parametrization of a parameter circle.  Positive moments and the diameter are always finite.  Negative and logarithmic moments are interpreted in the extended sense if an additional self-contact singularity makes the corresponding integral infinite.
\end{remark}

\begin{lemma}\label{lem:log-well-defined}
Let $\gamma$ be a tame $C^1$ embedded closed curve in $\RR^3$.  Then
\[
\int_\gamma\int_\gamma \log|x-y|\,ds_x\,ds_y
\]
is finite.  In particular, $\sD_0(\gamma)$ is well defined.
\end{lemma}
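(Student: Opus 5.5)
The plan is to split the logarithm into positive and negative parts and bound each separately. Parametrize $\gamma$ by arc length $\sigma\colon \RR/L\mathbb Z\to\RR^3$ with $|\sigma'|\equiv1$ and $\sigma'$ continuous. Since $|x-y|\le\diam(\gamma)<\infty$, the positive part satisfies $\log^+|x-y|\le\log^+\diam(\gamma)$, and its double integral is at most $L^2\log^+\diam(\gamma)<\infty$. Everything therefore reduces to showing that
\[
\int_0^L\!\!\int_0^L \log^-|\sigma(s)-\sigma(t)|\,ds\,dt<\infty .
\]
For this I will prove a bi-Lipschitz lower bound: there is $c>0$ with $|\sigma(s)-\sigma(t)|\ge c\,|s-t|_{L}$, where $|s-t|_L$ denotes intrinsic distance on the parameter circle.

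The bi-Lipschitz bound comes in two regimes.

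\emph{Near the diagonal.} By uniform continuity of $\sigma'$, choose $\delta>0$ such that $|\sigma'(u)-\sigma'(v)|\le 1/2$ whenever $|u-v|_L\le\delta$. Then for $|s-t|_L\le\delta$,
\[
\langle \sigma(t)-\sigma(s),\sigma'(s)\rangle=\int_s^t\langle\sigma'(u),\sigma'(s)\rangle\,du\ge \tfrac12|t-s|_L,
\]
using $\langle\sigma'(u),\sigma'(s)\rangle\ge 1-\tfrac12|\sigma'(u)-\sigma'(s)|^2\ge 7/8$. Hence $|\sigma(s)-\sigma(t)|\ge\tfrac12|s-t|_L$.

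\emph{Away from the diagonal.} The set $\{(s,t):|s-t|_L\ge\delta\}$ is compact, and $|\sigma(s)-\sigma(t)|$ is continuous and strictly positive on it because $\gamma$ is embedded. So it has a positive minimum $m$. Since $|s-t|_L\le L/2$ there, we get $|\sigma(s)-\sigma(t)|\ge (2m/L)\,|s-t|_L$.

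Taking $c=\min(1/2,\,2m/L)$ gives the bound on the whole torus.

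The conclusion is then a one-variable computation. From the bound, $\log^-|\sigma(s)-\sigma(t)|\le \log^-c+\log^-|s-t|_L$. Fubini reduces the double integral to
\[
L\int_{-L/2}^{L/2}\log^-|u|\,du,
\]
which is finite because $\log|u|$ is integrable near $0$. Thus the double integral of $\log|x-y|$ converges absolutely, and $\sD_0(\gamma)$ is a well-defined positive real number.

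The main obstacle is the near-diagonal estimate. It is exactly where $C^1$ regularity, rather than mere rectifiability, is used: for rectifiable curves with cusps a linear lower bound of this kind can fail. The remaining steps are routine compactness and integration.
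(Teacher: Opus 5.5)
Your proof is correct and follows the paper's argument. Both combine a local chord-arc lower bound near the circular diagonal with boundedness of the kernel away from the diagonal, which comes from embeddedness; you derive the local bound explicitly from uniform continuity of $\sigma'$, and you merge the two regimes into one global bound $|\sigma(s)-\sigma(t)|\ge c\,|s-t|_L$, which streamlines the same idea without changing it.
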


\begin{proof}
Let $\widetilde\gamma\colon\RR/L\mathbb Z\to\RR^3$ be an arc-length parametrization and write $d_L(s,t)=\min_{k\in\mathbb Z}|s-t+kL|$ for the circular distance.  Since $\gamma$ is a $C^1$ embedding parametrized by arc length, it is locally chord-arc: there exist constants $c>0$ and $\delta>0$ such that
\[
|\widetilde\gamma(s)-\widetilde\gamma(t)|\ge c\,d_L(s,t)
\]
whenever $d_L(s,t)<\delta$.  Thus the only singularity of the logarithmic kernel is bounded below by $\log c+\log d_L(s,t)$ near the circular diagonal.  Since $|\log d_L(s,t)|$ is integrable near the circular diagonal, the negative part is integrable.  Away from the diagonal the kernel is continuous and bounded.  Hence the double integral is finite.
\end{proof}

\begin{proposition}[The case \texorpdfstring{$p=0$}{p=0} as a limit]\label{prop:pzero-limit}
Let $\gamma$ be a tame $C^1$ embedded closed curve in $\RR^3$.  Then
\[
\lim_{p\to0}
\left(
\frac{1}{L^2}\int_\gamma\int_\gamma |x-y|^p\,ds_x\,ds_y
\right)^{1/p}
=
\exp\left(
\frac{1}{L^2}\int_\gamma\int_\gamma \log|x-y|\,ds_x\,ds_y
\right),
\]
where $L=\len(\gamma)$.  Thus the definition of $\sD_0$ is the continuous extension of $\sD_p$ at $p=0$.
\end{proposition}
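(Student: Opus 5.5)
The plan is to treat the normalized measure $\mu=L^{-2}\,ds_x\,ds_y$ on $\gamma\times\gamma$ as a probability measure and the function $f(x,y)=\log|x-y|$ as a random variable. Then $\sD_p(\gamma)=(\int e^{pf}\,d\mu)^{1/p}$ is the power mean of $e^{f}$. The claim becomes the standard fact that power means converge to the geometric mean as $p\to0$, provided $e^{pf}$ is $\mu$-integrable for $p$ in a neighborhood of $0$. We therefore first establish integrability and then differentiate the moment function at $p=0$.

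\textbf{Integrability.} By Lemma~\ref{lem:log-well-defined} and its proof, $\gamma$ is locally chord-arc: $|x-y|\ge c\,d_L(s,t)$ near the diagonal. Away from the diagonal, $|x-y|$ is bounded below by a positive constant by compactness and injectivity. Hence $|x-y|^{q}\le C\,d_L(s,t)^{q}+C$ for $-1<q<0$, which is integrable. For $q>0$ the integrand is bounded by $\diam(\gamma)^q$. So $M(q):=\int e^{qf}\,d\mu$ is finite for all $q\in(-1,\infty)$, and $f\in L^1(\mu)$ by the lemma.

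\textbf{Derivative at $0$.} Write $\log\sD_p=\frac1p\log M(p)$. Since $M(0)=1$, it suffices to show $M(p)=1+p\int f\,d\mu+o(p)$. This gives $\frac1p\log M(p)\to\int f\,d\mu$, and exponentiating yields the claim. To get the expansion, write $(e^{pf}-1)/p$. Pointwise it tends to $f$ as $p\to0$. By the mean value theorem, $|(e^{pf}-1)/p|\le |f|\,e^{|p||f|}\le |f|\,(e^{\delta f}+e^{-\delta f})$ for $|p|\le\delta$. Fix $\delta=1/4$. Then $|f|e^{\pm\delta f}\le C_\eta(e^{(\delta+\eta)f}+e^{-(\delta+\eta)f})$ for any small $\eta>0$, and both terms are integrable by the previous step because $\delta+\eta<1$. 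Dominated convergence then gives $(M(p)-1)/p\to\int f\,d\mu$.

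\textbf{Conclusion.} Since $\log(1+u)=u+O(u^2)$ and $M(p)-1=O(p)$, we get $\frac1p\log M(p)\to\int f\,d\mu$ from both sides of $0$. Continuity of $\exp$ then gives the stated limit.

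The main obstacle is the domination step near the diagonal for negative $p$. This is exactly where the chord-arc bound from Lemma~\ref{lem:log-well-defined} is needed, to ensure a negative moment slightly beyond $-\delta$ is finite. The rest is routine.
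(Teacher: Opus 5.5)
Your proof is correct and follows the paper's argument: both apply dominated convergence to the difference quotient $(|x-y|^p-1)/p$, using the local chord-arc bound to dominate it near the diagonal by an integrable negative power of exponent greater than $-1$, and then pass from $(M(p)-1)/p\to\int\log|x-y|\,d\mu$ to $\frac1p\log M(p)\to\int\log|x-y|\,d\mu$ via $\log(1+u)\sim u$. The only difference is cosmetic: the paper writes the dominating function directly as a constant multiple of $1+d_L(s,t)^{-p_0}|\log d_L(s,t)|$, whereas you obtain it through the mean value theorem and the finiteness of the moments of order $\pm(\delta+\eta)$.
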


\begin{proof}
Let $\mu=L^{-1}ds$ be normalized arc-length measure on $\gamma$, and set $f(x,y)=|x-y|$ on $\gamma\times\gamma$.  By Lemma~\ref{lem:log-well-defined}, $\log f\in L^1(\mu\times\mu)$.  We show that
\[
\lim_{p\to0}\frac{1}{p}\log\int f^p\,d\mu(x)d\mu(y)
=
\int\log f\,d\mu(x)d\mu(y).
\]
This is enough after exponentiating.

Choose $p_0\in(0,1)$.  Since $\gamma$ is compact, $f$ is bounded above.  Near the diagonal, the local chord-arc estimate used in Lemma~\ref{lem:log-well-defined} gives $f(x,y)\ge c\,d_L(s,t)$ in arc-length parameters.  Hence, for $|p|\le p_0$, the functions
\[
\left|\frac{f^p-1}{p}\right|
\]
are dominated near the diagonal by a constant multiple of
\[
1+d_L(s,t)^{-p_0}|\log d_L(s,t)|,
\]
which is integrable because $p_0<1$.  Away from the diagonal the domination is immediate.  Therefore dominated convergence gives
\[
\lim_{p\to0}\frac{\int f^p\,d\mu d\mu-1}{p}
=
\int\log f\,d\mu d\mu.
\]
The same domination applied to $f^p\to1$ gives $\int f^p\,d\mu d\mu\to1$.  The elementary relation $\log(1+a_p)\sim a_p$ now yields the displayed limit.
\end{proof}

\begin{proposition}\label{prop:scale-invariance-Dp}
For every $p\in(-1,\infty]$, the quantity $\sD_p(\gamma)$ scales linearly under Euclidean similarities.  Consequently, the ratio $\len(\gamma)/\sD_p(\gamma)$ is scale-invariant.
\end{proposition}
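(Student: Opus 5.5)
The plan is to check directly how each ingredient of Definition~\ref{def:spread} transforms under a Euclidean similarity $\Phi(x)=rQx+b$, with $r>0$, $Q\in O(3)$ and $b\in\RR^3$. Put $\widetilde\gamma=\Phi\circ\gamma$. Rigid motions preserve distances and arc length, and the dilation by $r$ multiplies both by $r$. So $\len(\widetilde\gamma)=r\,\len(\gamma)$, and under the bijection $x\mapsto\Phi(x)$ between the two images the arc-length measure transforms as $ds_{\Phi(x)}=r\,ds_x$. For every pair of points we also have $|\Phi(x)-\Phi(y)|=r|x-y|$. The map $\Phi$ is a $C^1$ diffeomorphism of $\RR^3$ isotopic to the identity or to a reflection. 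Hence $\widetilde\gamma$ is again a tame $C^1$ embedding, and the functional makes sense for it, even though the knot type may become the mirror image. The statement only concerns $\sD_p$ of a curve, so this causes no difficulty.

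Next I would treat each exponent separately. For $p\in(-1,\infty)\setminus\{0\}$, I change variables in the double integral. The factor $r^2$ from the two measures cancels against $L^{-2}$ becoming $r^{-2}L^{-2}$, and the kernel contributes $r^p$. The normalized integral is therefore multiplied by $r^p$, and taking the $1/p$-th power gives $\sD_p(\widetilde\gamma)=r\,\sD_p(\gamma)$. Finiteness is preserved: for $p>-1$ this follows from the local integrability remark, and in any case the integral is simply multiplied by a positive constant. For $p=0$, Lemma~\ref{lem:log-well-defined} ensures the integral is finite. We have $\log|\Phi(x)-\Phi(y)|=\log r+\log|x-y|$, and the normalized measure has total mass one, so the exponent increases by exactly $\log r$. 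Thus $\sD_0(\widetilde\gamma)=r\,\sD_0(\gamma)$. Alternatively, this case follows from the nonzero case by Proposition~\ref{prop:pzero-limit}. For $p=\infty$, we have $\diam(\Phi(\gamma))=r\,\diam(\gamma)$ immediately.

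Finally, the ratio satisfies $\len(\widetilde\gamma)/\sD_p(\widetilde\gamma)=r\len(\gamma)/(r\sD_p(\gamma))$. This gives scale invariance, and also invariance under rigid motions when $r=1$.

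There is no real obstacle here. The only points needing care are the justification of the change of variables for arc-length measure, which follows from the pushforward of $\Hg^1$ under a similarity scaling it by $r$, and the finiteness bookkeeping in the negative and logarithmic cases. Both are handled by the observations above.
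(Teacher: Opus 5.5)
Your proposal is correct and follows essentially the same route as the paper: the double integral picks up $r^{p+2}$ against the factor $r^2$ from the normalization, the logarithmic case shifts by exactly $\log r$, and the diameter scales directly. Your treatment of the full similarity $x\mapsto rQx+b$ rather than a pure dilation is a harmless extension of the paper's argument.
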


\begin{proof}
Let $\gamma_a$ be obtained from $\gamma$ by scaling with factor $a>0$.  Then $\len(\gamma_a)=a\len(\gamma)$.  For $p\ne0,\infty$, the double integral scales by $a^{p+2}$, while the normalizing factor $\len(\gamma_a)^2$ scales by $a^2$, so $\sD_p(\gamma_a)=a\sD_p(\gamma)$.  For $p=0$, the identity $\log|ax-ay|=\log a+\log|x-y|$ gives the same conclusion after exponentiating.  For $p=\infty$, it is immediate from the scaling of diameter.
\end{proof}

\begin{lemma}[$L^p$ convergence to the diameter]\label{lem:Lp-to-diameter}
Let $\gamma$ be a nonconstant rectifiable closed curve, equipped with normalized arc-length measure.  Then
\[
\lim_{p\to\infty}\sD_p(\gamma)=\diam(\gamma)=\sD_\infty(\gamma).
\]
\end{lemma}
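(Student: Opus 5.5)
The plan is the standard $L^p\to L^\infty$ argument for the function $f(x,y)=|x-y|$ on $\gamma\times\gamma$ with the product measure $\mu\times\mu$, where $\mu=L^{-1}ds$ is normalized arc-length measure. Here $\sD_p(\gamma)=\|f\|_{L^p(\mu\times\mu)}$ for $0<p<\infty$. Write $D=\diam(\gamma)$; we have $D>0$ because $\gamma$ is nonconstant. We prove a matching upper bound and lower bound on $\sD_p(\gamma)$ and let $p\to\infty$.

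\textbf{Upper bound.} Since $\mu\times\mu$ is a probability measure and $f\le D$ everywhere, we get $\sD_p(\gamma)\le D$ for every $p>0$. Hence $\limsup_{p\to\infty}\sD_p(\gamma)\le D$.

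\textbf{Lower bound.} Fix $\eps\in(0,D)$ and consider the set
\[
E_\eps=\{(x,y)\in\gamma\times\gamma : |x-y|>D-\eps\}.
\]
We need $E_\eps$ to have positive $\mu\times\mu$-measure. This is the only real step, and the one place to be careful about measure versus image points.

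\begin{enumerate}
\item By compactness of the image, choose parameters $s_0,t_0$ with $|\widetilde\gamma(s_0)-\widetilde\gamma(t_0)|=D$, where $\widetilde\gamma$ is the Lipschitz (indeed $1$-Lipschitz) arc-length parametrization on $\RR/L\mathbb Z$.
\item If $|s-s_0|<\eps/2$ and $|t-t_0|<\eps/2$, then the $1$-Lipschitz bound and the triangle inequality give
\[
|\widetilde\gamma(s)-\widetilde\gamma(t)|>D-\eps.
\]
\item Measured in parameters, this product of intervals has measure at least $\min(\eps,L)^2/L^2>0$. Call this lower bound $m_\eps$.
\end{enumerate}

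Working in parameter space is what makes the argument valid even for non-injective curves, as allowed by Remark~\ref{rem:rectifiable-extension}.

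It follows that
\[
\sD_p(\gamma)\ge \left(\int_{E_\eps} f^p\right)^{1/p}\ge (D-\eps)\,m_\eps^{1/p}.
\]
Since $m_\eps^{1/p}\to1$ as $p\to\infty$, we obtain $\liminf_{p\to\infty}\sD_p(\gamma)\ge D-\eps$. Letting $\eps\to0$ and combining with the upper bound gives
\[
\lim_{p\to\infty}\sD_p(\gamma)=D=\sD_\infty(\gamma).
\]

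No step is a genuine obstacle. The only point to state explicitly is the positive-measure neighborhood of a diametral pair, which comes from the Lipschitz parametrization rather than from injectivity.
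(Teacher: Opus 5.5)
Your proof is correct and follows the paper's route. The paper treats $\sD_p(\gamma)$ as the $L^p$ norm of $|x-y|$ on a probability space, shows that the essential supremum equals $\diam(\gamma)$ because small neighborhoods of a diametral pair have positive measure, and then cites the standard $L^p\to L^\infty$ convergence; you prove that standard convergence inline, and your $1$-Lipschitz parameter-space estimate makes the positive-measure step explicit even for non-injective curves.
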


\begin{proof}
On the probability space $\gamma\times\gamma$ with product normalized arc-length measure, $\sD_p(\gamma)$ is the $L^p$ norm of the bounded function
\[
f(x,y)=|x-y|.
\]
Its essential supremum equals its supremum.  Indeed, if $x_0,y_0$ are diametral points, then every pair of sufficiently small relative neighborhoods of $x_0$ and $y_0$ has positive arc-length measure, and $f$ is arbitrarily close to $\diam(\gamma)$ on their product.  The standard convergence of $L^p$ norms to the essential supremum on a probability space now gives the result.
\end{proof}

\begin{proposition}[Monotonicity in the exponent]\label{prop:p-monotonicity}
Let $-1<p<q\le\infty$.  Let $\gamma$ be either a tame $C^1$ embedded closed curve or a nonconstant polygonal closed curve with finitely many edges.  Then
\[
\sD_p(\gamma)\le\sD_q(\gamma).
\]
\end{proposition}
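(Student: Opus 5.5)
This is the power-mean (Lyapunov/Jensen) inequality on the probability space $\Omega=\gamma\times\gamma$ with product normalized arc-length measure $\mu\times\mu$, applied to $f(x,y)=|x-y|$. I will first check that all the quantities involved are finite, then split into cases according to the signs of $p$ and $q$, and finally handle $q=\infty$ and $p=0$ by the limit statements already proved.

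\emph{Finiteness.} Since $f$ is bounded, every positive moment is finite. For $-1<p<0$ the moment is finite because of the local chord-arc bound $f\ge c\,d_L(s,t)$ near the diagonal, as in Lemma~\ref{lem:log-well-defined}. For a polygon, this bound holds near the diagonal even at vertices, since two adjacent non-collinear edges satisfy $|x-y|\ge c(|s|+|t|)$. Away from the diagonal, however, a nonconstant polygon that is not embedded may have self-contacts, where $f$ vanishes. There I will use the extended-sense convention of Remark~\ref{rem:rectifiable-extension}. If $\int f^p=\infty$ for some $p<0$, then $\sD_p=0$, so the inequality is trivial for that $p$. Likewise $\int\log f=-\infty$ gives $\sD_0=0$.

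\emph{Finite exponents with $0<p<q<\infty$.} Apply Jensen's inequality to the convex function $t\mapsto t^{q/p}$, evaluated at $f^p$. This gives
\[
\Bigl(\int f^p\Bigr)^{q/p}\le\int f^q,
\]
and taking $q$-th roots yields $\sD_p\le\sD_q$.

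\emph{Finite exponents with $p<q<0$.} Here $q/p\in(0,1)$, so $t\mapsto t^{q/p}$ is concave. Jensen then gives
\[
\int f^q\le\Bigl(\int f^p\Bigr)^{q/p}.
\]
Raising both sides to the negative power $1/q$ reverses the inequality, which again gives $\sD_p\le\sD_q$.

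\emph{Finite exponents with $p<0<q$.} Apply Jensen to $\exp$ with $p\log f$, or equivalently use the concavity of $\log$. This gives
\[
\tfrac1p\log\int f^p\le\int\log f\le\tfrac1q\log\int f^q,
\]
that is, $\sD_p\le\sD_0\le\sD_q$. The same two inequalities cover the cases $p=0$ and $q=0$ directly.

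\emph{The case $q=\infty$.} Since $f\le\diam(\gamma)$ pointwise, $\sD_p\le\diam(\gamma)$ for every finite $p$. Alternatively, this follows by letting $q\to\infty$ and using Lemma~\ref{lem:Lp-to-diameter}.

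The main obstacle I expect is not the convexity step but the bookkeeping for integrability at negative and logarithmic exponents in the polygonal case. The diagonal estimate must be checked at corners, and any off-diagonal self-contact must be handled by the extended-value convention, which makes the inequality trivial there. Once finiteness and these conventions are in place, the remaining argument is the standard Jensen argument, and it needs only that $\mu\times\mu$ is a probability measure.
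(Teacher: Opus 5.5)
Your proof is correct, but it reaches the power-mean inequality by a different route from the paper's.

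The paper works with the single function $\Phi(r)=\log\int f^r\,d(\mu\times\mu)$. It uses the convexity of $\Phi$ (log-convexity of moments, via H\"older) and $\Phi(0)=0$ to see that $\log\sD_r=\Phi(r)/r$ is a non-decreasing secant slope. It identifies the value at $r=0$ with $\Phi'(0)=\int\log f$ through Proposition~\ref{prop:pzero-limit}. It obtains $q=\infty$ by letting $r\to\infty$ with Lemma~\ref{lem:Lp-to-diameter}.

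Your case-by-case Jensen argument is more elementary in three ways. The two logarithmic Jensen inequalities pass across $p=0$ without any differentiability of $\Phi$ at $0$, so the dominated-convergence estimates behind Proposition~\ref{prop:pzero-limit} are not needed. All your steps remain valid in extended arithmetic, so finiteness of the moments is not actually required for the inequality itself. The diameter case reduces to the pointwise bound $f\le\diam(\gamma)$. The paper's route gives one unified convexity statement for all finite exponents, but it must first establish finiteness of every moment and identify $\Phi'(0)$, and that is where most of its proof is spent.

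One small correction to your bookkeeping. At a backtracking vertex of a non-embedded polygon (turning angle $\pi$), the adjacent edges are collinear and $|x-y|=\bigl||s|-|t|\bigr|$, so your bound $|x-y|\ge c(|s|+|t|)$ fails there. The singularity is of the overlapping-collinear type $|s-t|^p$, which is still integrable for $p>-1$. In fact, splitting into pairs of edges as the paper does shows that every polygon has finite moments for $p>-1$ and a finite logarithmic integral. Your extended-value fallback is therefore never triggered for polygons, which is harmless since your Jensen steps do not depend on it.
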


\begin{proof}
Let $\mu=L^{-1}ds$ be normalized arc-length measure and put $f(x,y)=|x-y|$.  For finite nonzero $r>-1$, set
\[
\Phi(r):=\log\int f^r\,d(\mu\times\mu).
\]
The function $\Phi$ is convex and $\Phi(0)=0$.  Hence the secant slope $\Phi(r)/r$ is non-decreasing in $r$, with its value at $r=0$ interpreted as
\[
\Phi'(0)=\int\log f\,d(\mu\times\mu).
\]
For a tame $C^1$ embedding, the integrability assertions in Lemma~\ref{lem:log-well-defined} and Proposition~\ref{prop:pzero-limit} justify this interpretation throughout $r>-1$.  For a polygonal curve, the same conclusion follows by splitting the double integral into finitely many pairs of edges: at an isolated intersection the singularity is two-dimensional, while along overlapping collinear edges it is of the form $|s-t|^r$, which is integrable for $r>-1$.  Since
\[
\log\sD_r(\gamma)=\frac{\Phi(r)}{r}
\]
for $r\ne0$, this proves the assertion for finite exponents, including passage across $r=0$.  For $q=\infty$, apply the finite-exponent inequality with an exponent $r>p$ and let $r\to\infty$ using Lemma~\ref{lem:Lp-to-diameter}.
\end{proof}

\subsection{Continuous and polygonal unconstrained densities}

\begin{definition}
Let $K$ be a knot type and let $p\in(-1,\infty]$.  The \emph{unconstrained $p$-density} of $K$ is
\[
\rho_p(K):=
\inf_{\gamma\in\Hg(K)}
\frac{\len(\gamma)}{\sD_p(\gamma)}.
\]
\end{definition}

\begin{definition}\label{def:polygonal-knots}
For an integer $n\ge3$, let $\Poly_n(K)$ denote the set of embedded polygonal knots in $\RR^3$ with $n$ parametrized edge intervals and representing $K$.  A vertex with zero turning angle is allowed, so subdivision gives the natural inclusion
\[
\Poly_n(K)\subset\Poly_{n+1}(K).
\]
\end{definition}

\begin{remark}
Under this convention, the number $e(K)$ defined below is still the usual stick number: zero-turning subdivision vertices may simply be deleted.
\end{remark}

\begin{definition}
Let $P\in\Poly_n(K)$.  Define $\sD_p(P)$ by the same formulas as in Definition~\ref{def:spread}, using uniform arc-length measure along the edges of $P$.  Define the \emph{polygonal unconstrained $p$-density} by
\[
\rho_{p,n}(K):=
\inf_{P\in\Poly_n(K)}
\frac{\len(P)}{\sD_p(P)}.
\]
\end{definition}

\begin{corollary}\label{cor:density-p-monotonicity}
If $-1<p<q\le\infty$, then
\[
\rho_p(K)\ge\rho_q(K),
\qquad
\rho_{p,n}(K)\ge\rho_{q,n}(K)
\]
whenever the polygonal density is defined.
\end{corollary}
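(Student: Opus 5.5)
The corollary follows directly from Proposition~\ref{prop:p-monotonicity}, applied curve by curve and then passed to the infimum. Fix $-1<p<q\le\infty$. For any $\gamma\in\Hg(K)$, the curve $\gamma$ is a tame $C^1$ embedded closed curve, so Proposition~\ref{prop:p-monotonicity} gives $\sD_p(\gamma)\le\sD_q(\gamma)$. Both quantities are positive: the $p$-mean of pairwise distances of a nonconstant curve lies in $(0,\infty)$, and for $p\le0$ this uses the finiteness from Lemma~\ref{lem:log-well-defined} and the local chord-arc bound. Taking reciprocals and multiplying by $\len(\gamma)>0$ yields
\[
\frac{\len(\gamma)}{\sD_p(\gamma)}\ge\frac{\len(\gamma)}{\sD_q(\gamma)}\ge\rho_q(K).
\]
Taking the infimum over $\gamma\in\Hg(K)$ on the left gives $\rho_p(K)\ge\rho_q(K)$.

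The polygonal statement is handled the same way. Every $P\in\Poly_n(K)$ is an embedded, hence nonconstant, polygonal closed curve with finitely many edges. The proposition explicitly covers this case, with integrability for $r>-1$ handled edge pair by edge pair. So $\sD_p(P)\le\sD_q(P)$, and
\[
\frac{\len(P)}{\sD_p(P)}\ge\rho_{q,n}(K).
\]
Taking the infimum over $\Poly_n(K)$ gives $\rho_{p,n}(K)\ge\rho_{q,n}(K)$ whenever $\Poly_n(K)\neq\emptyset$, which is exactly the meaning of ``whenever the polygonal density is defined.''

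The only point needing care is the positivity and finiteness of $\sD_p$ for negative $p$, so that taking reciprocals is legitimate. For embedded curves the integrand $|x-y|^p$ is locally integrable, as in the remark following Definition~\ref{def:spread}, so $0<\sD_p<\infty$. There is no genuine obstacle; the statement is a formal consequence of the monotonicity of $L^p$-means in the exponent.
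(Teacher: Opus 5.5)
Your proposal is correct and is the same argument as the paper's: apply Proposition~\ref{prop:p-monotonicity} to each continuous or polygonal representative, take reciprocals, and pass to the infimum. Your extra check that $0<\sD_p<\infty$, which justifies taking reciprocals, is the content the paper records separately in Proposition~\ref{prop:Dp-bounds}.
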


\begin{proof}
By Proposition~\ref{prop:p-monotonicity}, every representative satisfies
\[
\frac{\len(\gamma)}{\sD_p(\gamma)}
\ge
\frac{\len(\gamma)}{\sD_q(\gamma)}.
\]
Take the infimum over the corresponding continuous or polygonal admissible class.
\end{proof}

\begin{lemma}\label{lem:subdivision-invariance}
Let $P$ be an embedded polygonal knot, and let $P'$ be obtained from $P$ by subdividing one or more edges.  Then
\[
\len(P')=\len(P),
\qquad
\sD_p(P')=\sD_p(P)
\]
for every $p\in(-1,\infty]$.
\end{lemma}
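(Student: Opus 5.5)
The plan is to observe that subdivision changes neither the image set of $P$ nor the uniform arc-length measure on that image. Both $\len$ and $\sD_p$ depend only on these two data, so both are unchanged.

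\emph{Step 1 (length).} Subdividing an edge $[v_i,v_{i+1}]$ at an interior point $w$ replaces one segment by the two segments $[v_i,w]$ and $[w,v_{i+1}]$. These lie on the same line and their lengths add, $|v_i-w|+|w-v_{i+1}|=|v_i-v_{i+1}|$. Summing over edges gives $\len(P')=\len(P)$. For several subdivisions we induct on the number of inserted vertices.

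\emph{Step 2 (measure).} Let $\sigma$ and $\sigma'$ be the arc-length measures on $P$ and $P'$; each is the sum over edges of one-dimensional Hausdorff measure restricted to that edge. The two pieces of a subdivided edge overlap only at the single point $w$, which has measure zero. Hence the two pieces together carry exactly the Hausdorff measure of the original edge. All other edges are literally unchanged, so $\sigma'=\sigma$ as measures on $\RR^3$. Because the normalizing length is also unchanged, the normalized measures agree: $\mu'=\mu$.

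\emph{Step 3 (functionals).} For $p\ne0$ finite, $\sD_p$ is $\bigl(\int\!\!\int |x-y|^p\,d\mu\,d\mu\bigr)^{1/p}$. The integrand is a fixed function on $\RR^3\times\RR^3$ and the product measures coincide, so $\sD_p(P')=\sD_p(P)$. This holds equally when the integral is only finite in the sense justified in Proposition~\ref{prop:p-monotonicity}. The case $p=0$ is identical, using $\log|x-y|$ as the integrand. For $p=\infty$, the diameter depends only on the image set, and the image is unchanged by subdivision.

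\emph{Main obstacle.} The only point needing care is that the parametrized-edge convention in Definition~\ref{def:polygonal-knots} really yields uniform arc-length measure, and not, say, equal weight per edge. To settle this I would write the double integral edge by edge as $\sum_{i,j}\int_{e_i}\int_{e_j}$ and check that splitting $e_i$ into two collinear pieces splits each such term additively. This is just additivity of the one-dimensional integral over adjacent subintervals.
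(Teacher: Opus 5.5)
Your proposal is correct and follows essentially the same route as the paper: subdivision changes neither the image nor the uniform arc-length measure on it, so $\len$ and every $\sD_p$ (with the diameter at $p=\infty$ depending only on the image) are unchanged. Your edge-by-edge additivity check simply makes the paper's one-line argument explicit. The definition of $\sD_p(P)$ already specifies uniform arc-length measure along the edges, so the concern you raise under ``main obstacle'' is settled by that definition.
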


\begin{proof}
Subdividing edges does not change the image of the polygonal knot, nor the arc-length measure supported on that image.  Therefore both length and $\sD_p$ are unchanged.
\end{proof}

\begin{lemma}\label{lem:polygonal-smoothing-ratio}
Let $P\in\Poly_n(K)$ and let $p\in(-1,\infty]$.  Then there exists a sequence of smooth representatives $P_j\in\Hg(K)$ such that
\[
\len(P_j)\to\len(P),
\qquad
\sD_p(P_j)\to\sD_p(P).
\]
\end{lemma}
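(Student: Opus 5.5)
The plan is to round the corners of $P$ inside shrinking balls around its vertices and then pass to the limit in the double integral by dominated convergence. Concretely, let $v_1,\dots,v_n$ be the vertices of $P$ and fix $\eps_j\downarrow0$, smaller than half the minimal edge length and than half the distance between any two non-adjacent edges. In the ball $B(v_i,\eps_j)$ replace the two half-edges meeting at $v_i$ by a smooth planar arc. This arc lies in the plane of the two edges, is tangent to both edges at the boundary points, and stays inside the convex hull of the two half-edge segments. A zero-turning vertex needs no change. Because the balls are disjoint and meet only the two incident edges, and the arcs stay inside the ball, the result $P_j$ is an embedded $C^1$ (even smooth) curve. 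It is isotopic to $P$ by the straight-line homotopy inside each ball, which stays embedded in the triangle region. Hence $P_j\in\Hg(K)$.

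\textbf{Length and diameter.} The replaced arcs have length at most $2\eps_j$ each, so $|\len(P_j)-\len(P)|\le 4n\eps_j\to0$. The images converge in Hausdorff distance, since each point moves by at most $2\eps_j$. This gives $\diam(P_j)\to\diam(P)$, which settles $p=\infty$.

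\textbf{Finite $p$.} Parametrize $P_j$ and $P$ proportionally to arc length on a common circle $S^1=\RR/\mathbb Z$, choosing $\phi_j\colon S^1\to\RR^3$ converging uniformly to $\phi\colon S^1\to\RR^3$. Then $|\phi_j(s)-\phi_j(t)|^p\to|\phi(s)-\phi(t)|^p$ for $s\ne t$ (for $p\le0$ using injectivity of $\phi$). The normalizing factors are harmless, since the normalized double integral equals $\iint|\phi_j(s)-\phi_j(t)|^p\,ds\,dt$ with unit-speed scaling. For $p>0$ the integrands are uniformly bounded, so dominated convergence applies directly. For $p=0$ and $-1<p<0$, we need a uniform lower bound: a uniform chord--arc estimate
\[
|\phi_j(s)-\phi_j(t)|\ge c\,d(s,t)
\]
with $c>0$ independent of $j$. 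Given this, the integrands are dominated by $C\,d(s,t)^{p}$ or $C(1+|\log d(s,t)|)$, both integrable, and dominated convergence finishes. Convergence of the $p$-th root (respectively the exponential) then gives $\sD_p(P_j)\to\sD_p(P)$.

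\textbf{Main obstacle: the uniform chord--arc constant.} I will establish it in three cases.
\begin{itemize}
\item Pairs of parameters far apart, $d(s,t)\ge\delta$: the curves converge uniformly to an embedding, which gives a positive lower bound on $|\phi_j(s)-\phi_j(t)|$.
\item Nearby pairs on straight pieces: the chord equals the arc length times the speed ratio.
\item Nearby pairs involving a rounding arc: choose the arc as a scaled copy of one fixed convex planar arc, depending only on the angle at $v_i$, whose tangent turns monotonically through the exterior angle $\theta_i<\pi$. A convex arc with total turning $<\pi$ has chord--arc ratio at least $\cos(\theta_i/2)$. This bound is scale-invariant and extends to pairs straddling the arc and an adjacent edge, because the combined piece is still convex with total turning $\theta_i$.
\end{itemize}
Taking the minimum over the finitely many vertices, together with the far-pair constant, gives $c$ independent of $j$.
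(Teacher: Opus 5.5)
Your proposal is correct and follows essentially the paper's route: you round the corners inside shrinking disjoint vertex balls, get $p=\infty$ from Hausdorff convergence, and get finite $p$ from a uniform local chord--arc estimate feeding dominated (Vitali) convergence as in the appendix. Your convexity bound $\cos(\theta_i/2)$, with $\theta_i<\pi$, makes explicit the uniform constant that the paper only asserts. One small fix: $\eps_j$ must also be less than the distance from each vertex to every edge not containing it, which your two stated bounds do not ensure (a thin triangle is a counterexample), but this holds automatically for all large $j$ since $\eps_j\downarrow0$.
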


\begin{proof}
Choose pairwise disjoint small balls around the vertices of $P$, disjoint from the rest of the polygon.  Inside each ball, replace the two incident straight subarcs by a smooth rounding that agrees to all orders with the adjacent straight subarcs near the boundary of the ball.  The roundings are chosen so that their total added length tends to zero as the radii of the balls tend to zero.  The resulting smooth curves $P_j$ are ambient isotopic to $P$, converge to $P$ in the Hausdorff metric, and satisfy $\len(P_j)\to\len(P)$.

Let $\eta_j$ and $\eta$ be the rescaled arc-length parametrizations of $P_j$ and $P$ on $S^1$.  Since $P$ has finitely many edges and all vertex angles are nonzero, it satisfies a circular local chord-arc estimate.  The chosen roundings may be made with a uniform local chord-arc estimate away from a set of shrinking parameter measure.  The continuity lemmas in the appendix then give $\sD_p(P_j)\to\sD_p(P)$ for finite $p$, and the case $p=\infty$ follows from Hausdorff convergence of compact sets and Lemma~\ref{lem:hausdorff-diameter}.
\end{proof}

\subsection{Elementary bounds}

\begin{lemma}\label{lem:length-diameter}
For every rectifiable simple closed curve $\gamma\subset\RR^3$,
\[
\len(\gamma)\ge 2\diam(\gamma).
\]
\end{lemma}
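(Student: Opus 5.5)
The plan is the standard two-arc argument. Pick diametral points $x_0,y_0\in\gamma$ with $|x_0-y_0|=\diam(\gamma)$. Such points exist because $\gamma$ is compact and the distance function is continuous on $\gamma\times\gamma$. If $\diam(\gamma)=0$ the curve is a point, which is excluded because $\gamma$ is a simple closed curve, so we may assume $x_0\ne y_0$.

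Next, since $\gamma$ is a simple closed curve, removing $x_0$ and $y_0$ leaves two open arcs. Their closures $\alpha_1,\alpha_2$ are arcs from $x_0$ to $y_0$, each lying in $\gamma$. Because they meet only at their endpoints, length is additive:
\[
\len(\gamma)=\len(\alpha_1)+\len(\alpha_2).
\]
This additivity of length for rectifiable curves under splitting the parameter circle is the only point needing any care, and it is standard.

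Finally, the length of any rectifiable arc dominates the distance between its endpoints. The one-segment inscribed polygon already gives $\len(\alpha_i)\ge|x_0-y_0|$ from the definition of length as a supremum of inscribed polygon lengths. Summing over $i=1,2$ gives
\[
\len(\gamma)\ge 2|x_0-y_0|=2\diam(\gamma).
\]

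No genuine obstacle is expected. Simplicity is used only to guarantee two arcs joining $x_0$ to $y_0$ whose lengths add. Equivalently, traversing $\gamma$ from $x_0$ around back to $x_0$ passes through $y_0$, so $\len(\gamma)\ge|x_0-y_0|+|y_0-x_0|$. This version even works for nonsimple closed curves, which is why the rectifiable extension in Remark~\ref{rem:rectifiable-extension} causes no difficulty.
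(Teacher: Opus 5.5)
Your proposal is correct and follows the paper's proof: you choose diametral points $x_0,y_0$, split $\gamma$ into the two subarcs they determine, and bound each subarc's length below by $|x_0-y_0|$. Your extra justifications (existence of diametral points by compactness, additivity of length, and the one-segment inscribed polygon) fill in details the paper leaves implicit.
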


\begin{proof}
Choose points $x,y\in\gamma$ such that $|x-y|=\diam(\gamma)$.  The curve $\gamma$ is divided by $x$ and $y$ into two subarcs, each of length at least $|x-y|$.  Hence $\len(\gamma)\ge2\diam(\gamma)$.
\end{proof}

\begin{proposition}\label{prop:Dp-bounds}
Let $\gamma$ be either a tame $C^1$ embedded closed curve or a nonconstant polygonal closed curve with finitely many edges, and let $p\in(-1,\infty]$.  Then
\[
0<\sD_p(\gamma)\le\diam(\gamma).
\]
\end{proposition}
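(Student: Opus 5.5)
The plan is to prove the two inequalities separately, splitting the exponent range into the cases $p=\infty$, $p\in(0,\infty)$, $p=0$, and $p\in(-1,0)$.

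\emph{Upper bound.} For $p=\infty$ we have $\sD_\infty(\gamma)=\diam(\gamma)$ by definition. For finite $p$ we invoke Proposition~\ref{prop:p-monotonicity} with $q=\infty$, which is stated precisely for the two classes of curves in the present statement. It gives $\sD_p(\gamma)\le\sD_\infty(\gamma)=\diam(\gamma)$, including for $p=0$ and negative $p$. One can also check this directly. If $p>0$, then $|x-y|^p\le\diam(\gamma)^p$ pointwise; integrating against normalized arc-length measure and taking the $1/p$ power preserves the inequality. If $p=0$, use $\log|x-y|\le\log\diam(\gamma)$. If $p<0$, then $|x-y|^p\ge\diam(\gamma)^p$, so the mean is at least $\diam(\gamma)^p$, and raising to the negative power $1/p$ reverses the inequality.

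\emph{Positivity.} First, $\diam(\gamma)>0$ because the curve is nonconstant. For $p=\infty$ this finishes the argument. For $p>0$, the function $|x-y|^p$ is continuous and nonnegative, and it is strictly positive on the set of pairs of distinct points. That set has full product measure, since the diagonal and, in the polygonal case, the finitely many self-intersection pairs form a null set. So the integral is positive and $\sD_p>0$.

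For $-1<p<0$, the integrand lies in $(0,\infty]$, and the integral is positive because the integrand is at least $\diam(\gamma)^p>0$. The only point to check is that the integral is finite; otherwise $\sD_p$ would be $0$ in the extended sense of Remark~\ref{rem:rectifiable-extension}. For $C^1$ embeddings, finiteness follows from the local chord-arc estimate used in Lemma~\ref{lem:log-well-defined}, since $d_L(s,t)^p$ is integrable for $p>-1$. For polygons, it follows from the edge-pair splitting in the proof of Proposition~\ref{prop:p-monotonicity}: isolated intersections give two-dimensional singularities, and collinear overlaps give singularities of type $|s-t|^p$.

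For $p=0$ we need the log integral to be finite, not $-\infty$. For $C^1$ embeddings this is Lemma~\ref{lem:log-well-defined}. For polygons the same edge-pair splitting applies, since $\log|s-t|$ is integrable.

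\emph{Main obstacle.} The only delicate step is this finiteness of the negative and logarithmic moments for polygonal curves that may be non-embedded. For isolated crossing points I would bound $|x-y|\ge c\,(|s-s_0|+|t-t_0|)$ for two edges crossing at a nonzero angle, which gives a two-dimensional singularity that is integrable for $p>-2$.
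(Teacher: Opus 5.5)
Your proposal is correct and follows the paper's proof essentially step for step. Finiteness and positivity of the negative and logarithmic moments come from the local chord-arc estimate of Lemma~\ref{lem:log-well-defined} for $C^1$ embeddings and from the edge-pair splitting (isolated crossings versus collinear overlaps) for polygons, and the upper bound is the case $q=\infty$ of Proposition~\ref{prop:p-monotonicity}. Your extra pointwise check of the upper bound is a harmless, slightly more self-contained supplement. One small wording fix: for overlapping collinear edges the coincidence set in the parameter square is a finite union of segments rather than finitely many pairs; it is still null, so your positivity argument for $p>0$ is unaffected.
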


\begin{proof}
For $p>0$, positivity is immediate because the curve is nondegenerate, and finiteness follows from boundedness of the kernel.  For a tame $C^1$ embedding and $-1<p<0$, the only singularity is the circular diagonal and the local chord-arc estimate in Lemma~\ref{lem:log-well-defined} makes the kernel locally integrable.  The logarithmic case is exactly Lemma~\ref{lem:log-well-defined}.

For a polygonal curve, split the double integral into finitely many pairs of edges.  Near an isolated intersection the distance is comparable to a two-dimensional linear singularity, and along overlapping collinear edges it reduces to a one-dimensional factor of the form $|s-t|$.  Thus $|x-y|^p$ is locally integrable for $p>-1$, and $\log|x-y|$ is locally integrable as well.  Consequently, in all cases $\sD_p(\gamma)$ is finite and positive.

The upper bound is the special case $q=\infty$ of Proposition~\ref{prop:p-monotonicity}.
\end{proof}

\begin{proposition}\label{prop:basic-properties}
Let $K$ be a knot type and let $p\in(-1,\infty]$.  Then the following hold.
\begin{enumerate}
\item[(1)] Whenever $\Poly_n(K)\ne\varnothing$, both $\rho_p(K)$ and $\rho_{p,n}(K)$ are well defined.
\item[(2)] They satisfy $\rho_p(K)\ge2$ and $\rho_{p,n}(K)\ge2$.
\item[(3)] For every $n$ with $\Poly_n(K)\ne\varnothing$, one has $\rho_p(K)\le\rho_{p,n}(K)$.
\item[(4)] If $e(K):=\min\{n:\Poly_n(K)\ne\varnothing\}$, then $\{\rho_{p,n}(K)\}_{n\ge e(K)}$ is non-increasing.  Hence $\lim_{n\to\infty}\rho_{p,n}(K)$ exists.
\end{enumerate}
\end{proposition}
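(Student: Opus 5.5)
I would prove the four items in order, each reducing to a result already recorded above.

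For (1), the point is that the admissible classes are nonempty and each ratio is a finite positive number. Every knot type has a polygonal representative, and Lemma~\ref{lem:polygonal-smoothing-ratio} converts it into smooth representatives, so $\Hg(K)\ne\varnothing$. Whenever $\Poly_n(K)\ne\varnothing$, the polygonal class is nonempty by assumption. For every admissible curve, Proposition~\ref{prop:Dp-bounds} gives $0<\sD_p<\infty$, and the length is finite and positive. Hence each ratio lies in $(0,\infty)$, and both infima are well-defined real numbers.

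For (2), I would combine Proposition~\ref{prop:Dp-bounds} with Lemma~\ref{lem:length-diameter}:
\[
\frac{\len(\gamma)}{\sD_p(\gamma)}\ge\frac{\len(\gamma)}{\diam(\gamma)}\ge 2 .
\]
This holds for $C^1$ embeddings and for embedded polygons, since both are rectifiable simple closed curves. Taking infima gives $\rho_p(K)\ge2$ and $\rho_{p,n}(K)\ge2$.

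For (3), fix $P\in\Poly_n(K)$ and $\eps>0$. Lemma~\ref{lem:polygonal-smoothing-ratio} supplies $P_j\in\Hg(K)$ with $\len(P_j)\to\len(P)$ and $\sD_p(P_j)\to\sD_p(P)>0$. The ratios therefore converge, so
\[
\rho_p(K)\le \frac{\len(P_j)}{\sD_p(P_j)}\le \frac{\len(P)}{\sD_p(P)}+\eps
\]
for large $j$. Letting $\eps\to0$ and taking the infimum over $P$ gives $\rho_p(K)\le\rho_{p,n}(K)$. The only delicate input is the convergence $\sD_p(P_j)\to\sD_p(P)$ for negative $p$ and for $p=0$. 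That step is entirely delegated to Lemma~\ref{lem:polygonal-smoothing-ratio} and the appendix, so I would simply invoke it.

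For (4), I would use the inclusion $\Poly_n(K)\subset\Poly_{n+1}(K)$ from Definition~\ref{def:polygonal-knots}. It is realized by subdividing an edge, and Lemma~\ref{lem:subdivision-invariance} shows this leaves both $\len$ and $\sD_p$ unchanged. The infimum over the larger class is therefore no larger, so $\rho_{p,n+1}(K)\le\rho_{p,n}(K)$ for every $n\ge e(K)$; by induction, $\Poly_n(K)\ne\varnothing$ for all such $n$. The sequence is non-increasing and, by (2), bounded below by $2$, so the limit exists.

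I expect no genuine obstacle in this argument. The only point needing care is the nonemptiness in (1) and in (4), and the inclusion of classes handles it.
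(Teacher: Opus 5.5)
Your proposal is correct and follows the paper's proof essentially step by step. You use Proposition~\ref{prop:Dp-bounds} for well-definedness, the chain $\len/\sD_p\ge\len/\diam\ge2$ for (2), Lemma~\ref{lem:polygonal-smoothing-ratio} plus a limit for (3), and edge subdivision for (4). Your explicit checks that $\Hg(K)\ne\varnothing$ and that $\Poly_n(K)\ne\varnothing$ for all $n\ge e(K)$ are a small addition that the paper leaves implicit.
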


\begin{proof}
The ratios are well defined by Proposition~\ref{prop:Dp-bounds}.  The lower bounds follow from
\[
\frac{\len(\gamma)}{\sD_p(\gamma)}
\ge
\frac{\len(\gamma)}{\diam(\gamma)}
\ge2.
\]
The polygonal case is identical.  To prove (3), fix $P\in\Poly_n(K)$.  Lemma~\ref{lem:polygonal-smoothing-ratio} provides smooth representatives $P_j$ in $\Hg(K)$ such that
\[
\frac{\len(P_j)}{\sD_p(P_j)}\longrightarrow
\frac{\len(P)}{\sD_p(P)}.
\]
Since $\rho_p(K)$ is the infimum over all smooth representatives in $\Hg(K)$, we have
\[
\rho_p(K)\le \frac{\len(P_j)}{\sD_p(P_j)}
\]
for every $j$.  Passing to the limit gives
\[
\rho_p(K)\le \frac{\len(P)}{\sD_p(P)}.
\]
Taking the infimum over all $P\in\Poly_n(K)$ gives $\rho_p(K)\le\rho_{p,n}(K)$.  Finally, subdivision of one edge sends $\Poly_n(K)$ to $\Poly_{n+1}(K)$ without changing length or $\sD_p$, so $\rho_{p,n+1}(K)\le\rho_{p,n}(K)$.
\end{proof}

\section{Degeneration of the unconstrained densities}\label{sec:degeneration}

\subsection{Local knotting}

\begin{lemma}[Uniform chord-arc control for scaled local patterns]\label{lem:scaled-pattern-chord-arc}
Let $\gamma$ be a tame $C^1$ embedded closed curve, and let $I_j\subset\gamma$ be subarcs whose lengths tend to zero.  Suppose that each $I_j$ is replaced by a scaled copy of one fixed embedded $C^1$ arc pattern $A$, with the same endpoint tangents and with straight collar subarcs near the two endpoints.  Assume that the scaling factors tend to zero and that the replacements are made in local balls meeting $\gamma$ exactly in the corresponding $I_j$.
Then, after choosing the local balls sufficiently small, the resulting curves satisfy a uniform local chord-arc estimate: there are constants $c>0$ and $\delta>0$, independent of $j$, such that, for all sufficiently large $j$,
\[
 |x-y|\ge c\, d_{\gamma_j}(x,y)
\]
whenever $x,y\in\gamma_j$ and their intrinsic distance $d_{\gamma_j}(x,y)$ is at most $\delta$.
\end{lemma}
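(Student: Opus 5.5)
We split pairs $x,y\in\gamma_j$ with $d_{\gamma_j}(x,y)\le\delta$ into three cases. In the first, both points lie on the unchanged part $\gamma\setminus I_j$. In the second, both lie inside the inserted scaled pattern $A_j=a_jA$. In the third, one point lies in $A_j$ and the other outside it.

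First we fix constants from the two fixed objects. Since $\gamma$ is a $C^1$ embedding, the argument of Lemma~\ref{lem:log-well-defined} gives $c_\gamma,\delta_\gamma>0$ with $|x-y|\ge c_\gamma d_\gamma(x,y)$ whenever $d_\gamma(x,y)<\delta_\gamma$. The pattern $A$ is a compact embedded $C^1$ arc, so it is globally chord-arc: $|u-v|\ge c_A d_A(u,v)$ for all $u,v\in A$. Here we use compactness together with the local estimate near the diagonal and embeddedness away from it. This global estimate is scale invariant, so it transfers to $a_jA$ with the same constant $c_A$. That settles the second case for all $j$.

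For the first case, replacing $I_j$ can only change intrinsic distances between points outside $I_j$ through paths crossing $I_j$. Take the local ball $B_j$ to have radius comparable to $\len(I_j)$, with $\gamma\cap B_j=I_j$ a nearly straight arc. The length of $A_j$ equals $a_j\len(A)$, which is comparable to $\len(I_j)$. Hence the intrinsic distance in $\gamma_j$ is at most a bounded multiple of the intrinsic distance in $\gamma$, and case one follows with a worse constant.

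The third case is the main obstacle. Let $x\in A_j$ and $y\notin A_j$. The key device is the straight collars. Shrink the local balls and let $j$ be large, so that on $B_j$ the curve $\gamma$ is $C^1$-close to a straight segment of length comparable to $\operatorname{radius}(B_j)$. Then $A_j$ together with the adjacent pieces of $\gamma$ is, after rescaling by $a_j^{-1}$, a $C^1$-small perturbation of a fixed configuration: $A$ extended by two straight rays along its endpoint tangents. For this we need the collar directions to match the tangent of $\gamma$ at the endpoints of $I_j$, which is assumed.

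That limiting configuration is an embedded curve with straight ends, provided the pattern is placed so that the rays do not re-enter the convex hull of $A$. We arrange this by choosing $B_j$ so that $A_j\subset B_j$ and the collars exit $B_j$ transversally. The limiting configuration is then chord-arc on bounded windows with some constant $c_*$. By $C^1$-closeness, the rescaled curves inherit, say, $c_*/2$ on windows of rescaled size $R$.

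If instead $d_{\gamma_j}(x,y)$ exceeds $Ra_j$, then $y$ lies outside $B_j$ at distance at least a fixed multiple of $d_{\gamma_j}(x,y)$ from the pattern. This follows because $\gamma$ leaves $B_j$ along a nearly straight arc, so the chord-arc estimate for $\gamma$ applies to $y$ and the exit point of $B_j$, and the triangle inequality closes the argument.

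Finally, take $c$ to be the minimum of the constants from the three cases, and $\delta\le\delta_\gamma/2$ so that paths of length at most $\delta$ meet at most one replaced region. This requires the $I_j$ to be pairwise separated at intrinsic scale, which is guaranteed once $\len(I_j)\to0$ and there is one replacement per curve.
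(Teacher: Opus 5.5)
Your proposal is correct and follows essentially the paper's route. It uses the exterior chord-arc constant of $\gamma$, the scale invariance of the chord-arc quotient for the fixed pattern extended by its straight collar rays, and $C^1$-closeness to that fixed model after rescaling near the gluing points. Your pairwise case split makes explicit what the paper compresses into ``taking the minimum of the three regional constants'': the bound $d_{\gamma_j}\le M\,d_\gamma$ for exterior pairs, and the triangle-inequality step for mixed pairs beyond the rescaled window.
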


\begin{proof}
There are three regions to control.
\begin{itemize}
\item \emph{Exterior.}  The original curve $\gamma$ is locally chord-arc by compactness and the $C^1$ embedding condition.  After shrinking the replacement balls, the same constants work on the exterior part of every $\gamma_j$.
\item \emph{Scaled model.}  The fixed pattern $A$, together with straight collar rays at its endpoints, is an embedded compact $C^1$ arc in a slightly larger model ball and therefore has a positive local chord-arc constant.  Scaling preserves the quotient of Euclidean and intrinsic distances.
\item \emph{Gluing neighborhoods.}  Choose each model collar to have length equal to a fixed positive fraction of the model locality radius.  After rescaling a replacement ball of radius $r_j$ by $r_j^{-1}$, the matching tangents and straight collars make the gluing neighborhoods uniformly $C^1$-close to a straight interval.  Their lower chord-arc constant therefore depends only on the fixed model and collar fraction.
\end{itemize}
Taking the minimum of these three constants gives $c$ and $\delta$ independent of $j$.
\end{proof}

\begin{lemma}[Continuity under a localized replacement]\label{lem:localized-replacement}
Let $\gamma$ be a tame $C^1$ embedded closed curve.  Let $I_j\subset\gamma$ be subarcs with $\ell_j=\len(I_j)\to0$, and suppose that $I_j$ is replaced by a $C^1$ embedded arc $A_j$ with the same endpoints and matching endpoint tangents.  Assume that the resulting closed curves $\gamma_j$ are embedded, that $\len(A_j)\to0$, and that $I_j\cup A_j$ is contained in a ball of radius $r_j\to0$.  Then, for every $p\in(-1,\infty]$,
\[
\len(\gamma_j)\to\len(\gamma)
\qquad\text{and}\qquad
\sD_p(\gamma_j)\to\sD_p(\gamma),
\]
provided the replacements are chosen from a fixed scaled $C^1$ pattern, so that the modified curves satisfy a uniform local chord-arc estimate.
\end{lemma}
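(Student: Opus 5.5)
The plan is to treat length first, then finite nonzero $p$, then $p=0$, and finally $p=\infty$, working throughout with normalized arc-length measure. Length is immediate: $\len(\gamma_j)=\len(\gamma)-\ell_j+\len(A_j)\to\len(\gamma)$. For the spread, set $L_j=\len(\gamma_j)$ and $L=\len(\gamma)$. Write $\gamma_j=E_j\cup A_j$ and $\gamma=E_j\cup I_j$, where $E_j=\gamma\setminus I_j$ is the common exterior arc. Then split
\[
\int_{\gamma_j}\int_{\gamma_j}|x-y|^p\,ds_x\,ds_y
=\int_{E_j}\int_{E_j}+2\int_{E_j}\int_{A_j}+\int_{A_j}\int_{A_j},
\]
and do the same for $\gamma$ with $I_j$ in place of $A_j$. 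The $E_j\times E_j$ term is the full integral for $\gamma$ minus the contributions involving $I_j$. So it suffices to show that every term involving $I_j$ or $A_j$ tends to $0$. After dividing by $L_j^2\to L^2$, this gives convergence of the normalized integral, and raising to the power $1/p$ is continuous because the limit is positive by Proposition~\ref{prop:Dp-bounds}.

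For $p>0$ the mixed terms are harmless. The kernel is bounded by $(\diam\gamma+2r_j)^p$, and the measures of $I_j$ and $A_j$ tend to zero, so every mixed term tends to $0$. The case $-1<p<0$ is the main obstacle, since the kernel is unbounded. There I would invoke the uniform local chord-arc estimate of Lemma~\ref{lem:scaled-pattern-chord-arc}: $|x-y|\ge c\,d_{\gamma_j}(x,y)$ whenever $d_{\gamma_j}(x,y)\le\delta$, with $c,\delta$ independent of $j$.

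I would then split the pairs $(x,y)$ according to intrinsic distance. For pairs with $d_{\gamma_j}(x,y)>\delta$, the Euclidean distance is bounded below uniformly in $j$. This needs a short compactness argument: $\gamma_j\to\gamma$ in the Hausdorff metric, $\gamma$ is embedded, and the modifications lie in balls of radius $r_j\to0$. Consequently the kernel is uniformly bounded on these pairs, and the contribution of such pairs with $x\in A_j$ is $O(\len(A_j))\to0$. For pairs with $d_{\gamma_j}\le\delta$ and $x\in A_j$, the chord-arc bound gives, uniformly in $x$,
\[
\int_{d_{\gamma_j}(x,y)\le\delta}|x-y|^p\,ds_y\le c^p\int_{-\delta}^{\delta}|t|^p\,dt=C<\infty ,
\]
so this part is at most $C\len(A_j)\to0$. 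The same bound applies to the terms involving $I_j$, using the chord-arc property of $\gamma$ itself.

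For $p=0$ the same splitting works with $|\log|x-y||$ in place of $|x-y|^p$. Away from the diagonal the kernel is bounded above by the uniform diameter bound and below by the uniform separation. Near the diagonal it is dominated by $|\log c|+|\log t|$, which is integrable, so the log-integral converges and exponentiation finishes this case. For $p=\infty$, the curves $\gamma_j$ converge to $\gamma$ in the Hausdorff metric, since they differ only inside balls of radius $r_j\to0$. Diameter is continuous under Hausdorff convergence of compact sets (Lemma~\ref{lem:hausdorff-diameter}), which gives the claim.
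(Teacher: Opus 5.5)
Your proof is correct and follows the paper's argument. It uses the same decomposition into the unchanged exterior part plus the terms involving $I_j$ or $A_j$, bounded kernels for $p>0$, the uniform chord-arc estimate for $-1<p<0$ and its logarithmic analogue for $p=0$, and Hausdorff convergence of diameters for $p=\infty$. The one difference is cosmetic: the paper estimates the $O(a_j^{p+2})$ self-interaction and the endpoint cross terms separately, whereas you bound the singular part by the uniform-in-$x$ estimate $\int_{d_{\gamma_j}(x,y)\le\delta}|x-y|^p\,ds_y\le C$ multiplied by $\len(A_j)$, which is slightly cleaner. Note also that your far-from-diagonal separation really rests on $\gamma_j=\gamma$ outside the shrinking balls together with embeddedness of $\gamma$, not on Hausdorff convergence alone.
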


\begin{proof}
The convergence of length follows from
\[
\len(\gamma_j)-\len(\gamma)=\len(A_j)-\len(I_j)\to0.
\]
For $p=\infty$, the images converge in the Hausdorff metric, so the diameters converge by Lemma~\ref{lem:hausdorff-diameter}.

It remains to consider finite $p$.  Write $E_j=I_j\cup A_j$ and let $C_j$ denote the common part of $\gamma$ and $\gamma_j$.  The contribution to the unnormalized double integrals from $C_j\times C_j$ is identical for $\gamma$ and $\gamma_j$.  Thus it is enough to show that all terms involving $I_j$ or $A_j$ tend to zero, after comparing the normalizing factors, which converge because the total lengths converge.

For $p>0$, this is immediate from the boundedness of the kernel and the fact that the arc-length measure of $I_j\cup A_j$ tends to zero.  For $-1<p<0$, the singularity requires a local estimate.  The uniform local chord-arc bound gives constants $c,\delta>0$, independent of $j$, such that on short parameter intervals
\[
|x-y|\ge c\,d_{\gamma_j}(x,y),
\]
where $d_{\gamma_j}$ denotes intrinsic distance along the modified curve.  Hence the self-interaction of the inserted arc satisfies
\[
\int_{A_j}\int_{A_j}|x-y|^p\,ds_x\,ds_y
\le
C\int_0^{a_j}\int_0^{a_j}|s-t|^p\,ds\,dt
=O(a_j^{p+2}),
\]
where $a_j=\len(A_j)\to0$.  The same estimate applies to $I_j\times I_j$.

For the cross terms, it is enough to estimate near the two endpoints; away from the endpoints the distance to the common part is bounded below and the contribution is $O(a_j+\ell_j)$.  Near an endpoint, using arc-length coordinates $s\in[0,a_j]$ on $A_j$ and $t\in[0,\delta]$ on the adjacent common arc, the chord-arc estimate gives
\[
|x(s)-y(t)|\ge c(s+t).
\]
Therefore
\[
\int_0^{a_j}\int_0^\delta |x(s)-y(t)|^p\,dt\,ds
\le
C\int_0^{a_j}\int_0^\delta (s+t)^p\,dt\,ds
=O(a_j)+O(a_j^{p+2})\to0.
\]
The corresponding estimates for $I_j$ are identical.  This proves convergence of the double integrals for $-1<p<0$.

For $p=0$, the same decomposition is applied to the logarithmic integral.  The self-interactions over $A_j\times A_j$ and $I_j\times I_j$ are bounded in absolute value by $O(a_j^2|\log a_j|)$ and $O(\ell_j^2|\log \ell_j|)$, respectively.  The endpoint cross terms are bounded by
\[
C\int_0^{a_j}\int_0^\delta |\log(s+t)|\,dt\,ds
=O(a_j|\log a_j|)+O(a_j)\to0,
\]
with the same estimate for $I_j$.  Away from the endpoints the logarithmic kernel is bounded, and the cross contribution is again $O(a_j+\ell_j)$.  Thus the logarithmic double integrals converge.  Exponentiating gives convergence of $\sD_0$.
\end{proof}

\begin{theorem}[Local knotting degeneration]\label{thm:local-knotting}
Let $p\in(-1,\infty]$.  For all knot types $K$ and $J$,
\[
\rho_p(K\mathbin{\#}J)\le\rho_p(K).
\]
In particular,
\[
\rho_p(K)\le\rho_p(U),
\]
where $U$ denotes the unknot.
More precisely, any representative of $K$ may be modified inside an arbitrarily small ball so as to represent $K\mathbin{\#}J$, while changing both length and $\sD_p$ by an arbitrarily small amount.
\end{theorem}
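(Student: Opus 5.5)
The plan is to reduce everything to Lemma~\ref{lem:localized-replacement}, using a fixed model pattern for $J$ that is inserted at shrinking scale. Fix $\eps>0$ and choose $\gamma\in\Hg(K)$ with $\len(\gamma)/\sD_p(\gamma)<\rho_p(K)+\eps$. Smoothing does not change the infimum, so we may assume $\gamma$ is smooth near some point $x_0$. By a slight modification, we may also assume that a short subarc through $x_0$ is straight; this costs an arbitrarily small change in the ratio, by the same localized-replacement continuity. Next fix, once and for all, a model $C^1$ embedded arc $A$ in the closed unit ball $B$. We choose $A$ so that its endpoints are $\pm e_1$, so that it coincides with the straight segment $[-1,1]e_1$ near $\partial B$ (these are the straight collars), and so that the pair $(B,A)$ is a knotted ball--arc pair whose closure by a boundary arc represents $J$. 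Such a pattern exists because any knot type has a smooth long-knot representative that is straight outside a compact set.

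For each small $r_j\to0$, let $I_j$ be the straight subarc of $\gamma$ of length $2r_j$ centered at $x_0$, and choose $r_j$ small enough that the ball $B_j=B(x_0,r_j)$ meets $\gamma$ exactly in $I_j$. This is possible because $\gamma$ is embedded and straight near $x_0$. Replace $I_j$ by $A_j:=x_0+r_j R(A)$, where $R$ is a rotation carrying $e_1$ to the tangent direction of $\gamma$ at $x_0$. The resulting curve $\gamma_j$ is $C^1$, since the endpoint tangents match along the straight collars, and it is embedded, since $A_j\subset B_j$ and $B_j\cap\gamma=I_j$. Its knot type is $K\mathbin{\#}J$: a ball meeting a knot in an unknotted arc, with that arc replaced by a knotted arc of type $J$, realizes the connected sum by definition. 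This holds because $(B_j,I_j)$ is a trivial ball--arc pair, as $I_j$ is a straight diameter.

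We then verify the hypotheses of Lemma~\ref{lem:localized-replacement}. We have $\len(A_j)=r_j\len(A)\to0$, $\len(I_j)\to0$, and everything lies in $B_j$ with radius $r_j\to0$. The replacements are scaled copies of one fixed pattern with straight collars, so Lemma~\ref{lem:scaled-pattern-chord-arc} supplies the uniform local chord-arc estimate. The lemma then gives $\len(\gamma_j)\to\len(\gamma)$ and $\sD_p(\gamma_j)\to\sD_p(\gamma)$ for every $p\in(-1,\infty]$. Since $\sD_p(\gamma)>0$ by Proposition~\ref{prop:Dp-bounds}, the ratios also converge. Hence $\rho_p(K\mathbin{\#}J)\le\limsup_j\len(\gamma_j)/\sD_p(\gamma_j)<\rho_p(K)+\eps$, and letting $\eps\to0$ gives the inequality. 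Taking $K=U$ and using $U\mathbin{\#}J=J$ gives $\rho_p(J)\le\rho_p(U)$ for every $J$.

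The main obstacle is the preliminary straightening step together with the uniformity in Lemma~\ref{lem:scaled-pattern-chord-arc}. The simplest route to the straight subarc is to replace a tiny arc near $x_0$ by a $C^1$ arc that is straight in its middle, using the same continuity lemma with a non-knotted pattern. Alternatively, one could insert the pattern after a $C^1$-small local diffeomorphism flattening $\gamma$ near $x_0$, which changes distances by factors tending to $1$. In both cases the delicate point is keeping the chord-arc constant uniform at the gluing region, and this is exactly what the collar construction of Lemma~\ref{lem:scaled-pattern-chord-arc} is designed to handle.
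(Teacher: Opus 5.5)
Your proposal is correct and follows essentially the paper's proof: you insert one fixed knotted-arc pattern with straight collars, scaled into a shrinking ball, and invoke Lemma~\ref{lem:scaled-pattern-chord-arc} for the uniform chord-arc bound and Lemma~\ref{lem:localized-replacement} for convergence of length and $\sD_p$. Your preliminary straightening of a short subarc is a minor refinement: it makes the endpoint gluing exact, where the paper appeals to a ``harmless $C^1$ smoothing near the endpoints,'' and it makes triviality of the removed ball--arc pair immediate.
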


\begin{proof}
Let $\gamma\in\Hg(K)$ and fix a small subarc $I\subset\gamma$ contained in a ball $B$ whose intersection with $\gamma$ is exactly $I$.  Replace $I$ by a properly embedded knotted arc in $B$ with the same endpoints and matching tangent directions, chosen so that closing this arc by the complementary trivial boundary arc realizes $J$.  Scaling the pattern inside $B$ and taking $B$ sufficiently small, the replacement may be arranged so that the additional length is arbitrarily small.  The resulting curve, after a harmless $C^1$ smoothing near the endpoints, represents $K\mathbin{\#}J$.

The replacement can be taken from a fixed $C^1$ knotted-arc pattern with straight collars near its endpoints and then scaled into $B$.  Hence the lengths of both the removed and inserted arcs tend to zero.  By Lemma~\ref{lem:scaled-pattern-chord-arc}, the modified curves satisfy the uniform local chord-arc estimate required in Lemma~\ref{lem:localized-replacement}.  Therefore $\sD_p$ converges to $\sD_p(\gamma)$ for every $p\in(-1,\infty]$.

Thus for every $\eps>0$ there is a representative $\gamma_\eps$ of $K\mathbin{\#}J$ with
\[
\frac{\len(\gamma_\eps)}{\sD_p(\gamma_\eps)}
<
\frac{\len(\gamma)}{\sD_p(\gamma)}+\eps.
\]
Taking the infimum over $\gamma\in\Hg(K)$ gives $\rho_p(K\mathbin{\#}J)\le\rho_p(K)$.  Setting $K=U$ and using $U\mathbin{\#}J=J$ gives the stated special case.
\end{proof}

\begin{remark}
The theorem shows that the unconstrained $p$-density cannot be expected to give lower bounds for knot complexity.  Any complexity that can be localized at arbitrarily small scale is invisible to the infimum.
\end{remark}

\subsection{The mean-chord range \texorpdfstring{$-1<p\le2$}{-1<p<=2}}

We next record the sharp mean-chord input used in the finite range up to $p=2$.  The circle extremality of the mean chord at $p=1$ goes back to L\"uk\H{o} \cite{Luko}; the exponent range and strict equality statements used here follow from Exner--Harrell--Loss.  Let $C_L$ denote a round circle of length $L$.

\begin{theorem}[Mean-chord inequality]\label{thm:mean-chord}
Let $\gamma$ be a nonconstant rectifiable closed curve in $\RR^3$ of length $L$, understood through an arc-length parametrization, and let $C_L$ be a round circle of the same length.  For $p\in(0,2]$,
\[
\frac{1}{L^2}\int_\gamma\int_\gamma |x-y|^p\,ds_x\,ds_y
\le
\frac{1}{L^2}\int_{C_L}\int_{C_L}|x-y|^p\,ds_x\,ds_y,
\]
whereas for $p\in(-1,0)$ the reverse inequality holds for the $p$-th moments:
\[
\frac{1}{L^2}\int_\gamma\int_\gamma |x-y|^p\,ds_x\,ds_y
\ge
\frac{1}{L^2}\int_{C_L}\int_{C_L}|x-y|^p\,ds_x\,ds_y.
\]
Equivalently, for every $p\in(-1,0)\cup(0,2]$,
\[
\sD_p(\gamma)\le \sD_p(C_L).
\]
Consequently,
\[
\sD_0(\gamma)\le \sD_0(C_L).
\]
In every case in which the left-hand side is finite and positive, equality holds if and only if the arc-length parametrized image of $\gamma$ is a round circle.
\end{theorem}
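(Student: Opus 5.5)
The plan is to derive this from the Fourier (Exner--Harrell--Loss) approach to mean-chord inequalities, in the form attributed to Lükő and Exner--Harrell--Loss. Parametrize $\gamma$ by arc length on $\RR/L\mathbb Z$, rescale so $L=2\pi$, and write $\gamma(s+u)-\gamma(s)$ for the chord at parameter separation $u$. For fixed $u$, Parseval gives
\[
\frac{1}{2\pi}\int_0^{2\pi}|\gamma(s+u)-\gamma(s)|^2\,ds=\sum_{k\ne0}|\hat\gamma_k|^2\,4\sin^2(ku/2)\le 4\sin^2(u/2)\sum_{k\neq 0}k^2|\hat\gamma_k|^2=4\sin^2(u/2),
\]
using $|\sin(ku/2)|\le |k|\,|\sin(u/2)|$ and $\sum k^2|\hat\gamma_k|^2=\frac1{2\pi}\int|\gamma'|^2=1$. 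Equality for all $u$ forces $\hat\gamma_k=0$ for $|k|\ge2$, i.e. a round circle. Since the circle chord at separation $u$ is exactly $2|\sin(u/2)|$, this is the key comparison: the $L^2$-average (in $s$) of the squared chord at each fixed separation is dominated by the circle's.

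For $0<p\le2$, I apply Jensen (concavity of $t\mapsto t^{p/2}$) in $s$ for each fixed $u$:
\[
\frac1{2\pi}\int|\gamma(s+u)-\gamma(s)|^p\,ds\le\Bigl(\frac1{2\pi}\int|\gamma(s+u)-\gamma(s)|^2ds\Bigr)^{p/2}\le (2|\sin(u/2)|)^p,
\]
and then integrate in $u$. For $-1<p<0$, $t\mapsto t^{p/2}$ is convex, so Jensen reverses, giving the lower bound $\frac1{2\pi}\int|\cdot|^p\,ds\ge(2|\sin(u/2)|)^p$. Raising to the power $1/p<0$ flips this back into $\sD_p(\gamma)\le\sD_p(C_L)$. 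Integrability of the circle side uses $p>-1$, and if the left side is $+\infty$ the inequality holds trivially. For $p=0$ the same argument runs with the concave function $\log$: $\frac1{2\pi}\int\log|\cdot|\,ds\le\frac12\log(\text{mean square})\le\log(2|\sin(u/2)|)$. Alternatively, one passes to the limit $p\to0$ via Proposition~\ref{prop:pzero-limit}, but the direct Jensen route also gives equality information.

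Equality is handled by tracing both inequalities. If the final inequality is an equality with a finite, positive left side, then for a.e. $u$ both Jensen and the Parseval bound are equalities. Equality in Parseval for a set of $u$ of positive measure already forces $\sum_{|k|\ge2}(k^2\sin^2(u/2)-\sin^2(ku/2))|\hat\gamma_k|^2=0$ with positive weights for generic $u$, hence $\hat\gamma_k=0$ for $|k|\ge2$. Then $\gamma(s)=c+a\cos s+b\sin s$ with $|\gamma'|\equiv1$, which forces $a\perp b$ and $|a|=|b|=1$: a round circle. At $p=2$ Jensen is trivial but the Parseval step alone suffices. Conversely, circles give equality.

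The main obstacle is bookkeeping rather than substance. It consists of justifying the per-$u$ Fourier identity for merely Lipschitz $\gamma$ (fine, since $\gamma'\in L^2$), and handling the extended-value negative-moment case, where I must make sure strict inequality on a positive-measure set of $u$ survives integration when integrals might be infinite. I would also keep careful track of the sign flips under $1/p$.
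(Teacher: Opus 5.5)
Your proposal is correct and follows essentially the paper's route. For $p=0$ the paper's proof is exactly your argument: Jensen with $\log$ in $s$, the fixed-separation $p=2$ bound, Fubini, and a.e.-in-$u$ equality forcing the $p=2$ equality case. For $p\neq0$ the paper simply cites Exner--Harrell--Loss, whose argument is the same fixed-separation Fourier bound plus Jensen that you write out. The one difference is that you settle the nonsmooth equality case directly from the Fourier coefficients at a single separation $u\in(0,2\pi)$, using the strict bound $|\sin(ku/2)|<|k|\,|\sin(u/2)|$ for $|k|\ge2$. The paper instead delegates that point to the EHL addendum.
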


\begin{proof}
For $p\in(-1,0)\cup(0,2]$, the displayed moment inequalities are the integrated $L^p$-mean chord inequalities of Exner--Harrell--Loss \cite[Proposition~2.1 and Theorem~2.2]{EHL}.  Their Fourier argument is formulated for unit-speed loops and applies to the Lipschitz arc-length parametrization of a rectifiable curve.  The one-page addendum \cite{EHLAddendum} corrects the final nonsmooth equality argument in the proof of their Theorem~2.2.  The resulting strict equality statement shows that equality can occur only for a planar round circle.  For $p<0$, taking the $1/p$-th power reverses the moment inequality, so in both signs of $p$ one obtains $\sD_p(\gamma)\le\sD_p(C_L)$ with the asserted equality condition.  If a negative moment is infinite, then $\sD_p(\gamma)=0$ and the inequality is automatic.

The logarithmic case requires a separate strictness argument.  If the logarithmic double integral is $-\infty$, then $\sD_0(\gamma)=0$ and the inequality is immediate.  Otherwise, let
\[
c\colon\RR/L\mathbb Z\longrightarrow\RR^3
\]
be an arc-length parametrization.  For $0<u<L$, write
\[
d_u(s):=|c(s+u)-c(s)|.
\]
Jensen's inequality and the $p=2$ mean-chord inequality give
\[
\frac1L\int_0^L\log d_u(s)\,ds
\le
\frac12\log\left(
\frac1L\int_0^Ld_u(s)^2\,ds
\right)
\le
\log\left(
\frac{L}{\pi}\sin\frac{\pi u}{L}
\right)
\]
for $0<u<L$, with the sine interpreted symmetrically about $L/2$.  Integrating in $u$ and using Fubini's theorem yields
\[
\log\sD_0(\gamma)
\le
\log\sD_0(C_L).
\]
If equality holds, then the second inequality is an equality for almost every $u$.  The equality condition in the $p=2$ mean-chord theorem forces $\gamma$ to be a round circle.  Conversely, a round circle gives equality throughout.
\end{proof}

\begin{lemma}[Round-circle values]\label{lem:round-circle-values}
For a round circle $C_L$ of length $L$, one has, for $p\ne0$,
\[
\sD_p(C_L)
=
\frac{L}{\pi}
\left(
\frac{1}{\pi}\int_0^\pi \sin^p\theta\,d\theta
\right)^{1/p}.
\]
Moreover,
\[
\sD_0(C_L)=\frac{L}{2\pi}.
\]
\end{lemma}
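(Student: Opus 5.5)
We compute directly on the round circle of length $L$, radius $R=L/(2\pi)$.  The plan is to reduce the double integral to a one-dimensional integral over the central angle, then evaluate it for $p\ne0$ and for $p=0$ separately.

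\textbf{Reduction to one variable.}  Parametrize $C_L$ by angle, so that $ds=R\,d\phi$.  The chord between angles $\phi$ and $\psi$ has length $2R|\sin((\phi-\psi)/2)|$.  By rotational invariance the inner integral does not depend on the outer point, so
\[
\frac{1}{L^2}\int_{C_L}\int_{C_L}|x-y|^p\,ds_x\,ds_y
=\frac{1}{2\pi}\int_0^{2\pi}\bigl(2R\sin(u/2)\bigr)^p\,du .
\]
Substituting $\theta=u/2$ turns this into
\[
(2R)^p\,\frac{1}{\pi}\int_0^\pi\sin^p\theta\,d\theta .
\]
The integral is finite for $p>-1$, since $\sin^p\theta\sim\theta^p$ near the endpoints.

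\textbf{The case $p\ne0$.}  Take the $1/p$-th power and use $2R=L/\pi$.  This gives exactly the stated formula for $\sD_p(C_L)$.

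\textbf{The case $p=0$.}  The same reduction gives
\[
\log\sD_0(C_L)=\log(2R)+\frac{1}{\pi}\int_0^\pi\log\sin\theta\,d\theta .
\]
Use the classical value $\int_0^\pi\log\sin\theta\,d\theta=-\pi\log2$.  Then
\[
\log\sD_0(C_L)=\log(2R)-\log 2=\log R,
\]
so $\sD_0(C_L)=R=L/(2\pi)$.

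\textbf{Check via the limit.}  As a consistency check, Proposition~\ref{prop:pzero-limit} says $\sD_0(C_L)$ is the limit of the $p\ne0$ formula as $p\to0$.  Differentiating $\log\bigl(\pi^{-1}\int_0^\pi\sin^p\theta\,d\theta\bigr)$ at $p=0$ recovers the same log-sine integral.

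The only nonroutine input is the evaluation $\int_0^\pi\log\sin\theta\,d\theta=-\pi\log2$.  I would prove it by the standard trick: write $I=\int_0^{\pi/2}\log\sin\theta\,d\theta=\int_0^{\pi/2}\log\cos\theta\,d\theta$, and use $\sin2\theta=2\sin\theta\cos\theta$.  This gives $2I=\tfrac{\pi}{2}\log\tfrac12+I$, hence $I=-\tfrac{\pi}{2}\log2$, and the integral over $[0,\pi]$ is $2I=-\pi\log2$.
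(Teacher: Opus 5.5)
Your proof is correct and follows essentially the same route as the paper. Rotational invariance reduces the double integral to $(L/\pi)^p\,\frac{1}{\pi}\int_0^\pi\sin^p\theta\,d\theta$; you parametrize by central angle, whereas the paper uses arc-length separation $u$ and substitutes $\theta=\pi u/L$. The case $p=0$ then follows from $\frac{1}{\pi}\int_0^\pi\log\sin\theta\,d\theta=-\log 2$, which you also prove by the standard doubling argument where the paper simply cites it.
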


\begin{proof}
Let $\widetilde C_L\colon\RR/L\mathbb Z\to\RR^3$ be an arc-length parametrization of the circle.  If the circular arc-length distance between two parameters is $u\in[0,L/2]$, then
\[
 |\widetilde C_L(s+u)-\widetilde C_L(s)|=
 \frac{L}{\pi}\sin\frac{\pi u}{L}.
\]
Because the integrand depends only on the circular difference of the two parameters, the normalized double integral reduces to
\[
\begin{aligned}
\sD_p(C_L)^p
&=
\frac{1}{L^2}\int_0^L\int_0^L
 |\widetilde C_L(s)-\widetilde C_L(t)|^p\,ds\,dt \\
&=
\frac{2}{L}\int_0^{L/2}
 \left(\frac{L}{\pi}\sin\frac{\pi u}{L}\right)^p du.
\end{aligned}
\]
Indeed, for each fixed $s$, the two points at circular distance $u\in(0,L/2)$ from $s$ give the factor $2$, while the endpoints $u=0$ and $u=L/2$ have measure zero.  With the change of variables $\theta=\pi u/L$, this becomes
\[
\sD_p(C_L)^p
=
\left(\frac{L}{\pi}\right)^p
\frac{2}{\pi}\int_0^{\pi/2}\sin^p\theta\,d\theta
=
\left(\frac{L}{\pi}\right)^p
\frac{1}{\pi}\int_0^\pi\sin^p\theta\,d\theta.
\]
Taking the $1/p$-th power gives the stated formula.  For $-1<p<0$, the integral $\int_0^\pi\sin^p\theta\,d\theta$ is finite because $\sin\theta$ is comparable to the distance from $\theta$ to $\{0,\pi\}$ and $p>-1$.  In this negative range the map $t\mapsto t^{1/p}$ is decreasing; thus the formula is compatible with the reversal of the moment inequality used in Theorem~\ref{thm:mean-chord}.  For $p=0$, either take the limit $p\to0$, or compute directly using
\[
\frac{1}{\pi}\int_0^\pi \log(\sin\theta)\,d\theta=-\log2.
\]
This gives $\sD_0(C_L)=L/(2\pi)$.
\end{proof}

For later use, define the \emph{circle density constant} by
\begin{equation}\label{eq:circle-density-constant}
c_p:=
\begin{cases}
\displaystyle
\pi\left(
\dfrac{\pi}{\int_0^\pi\sin^p\theta\,d\theta}
\right)^{1/p},&p\ne0,\\[1.2ex]
2\pi,&p=0.
\end{cases}
\end{equation}
Thus $c_p=\len(C_L)/\sD_p(C_L)$ and, in particular, $c_2=\sqrt2\pi$.

\begin{theorem}[Complete degeneration in the mean-chord range]\label{thm:mean-chord-degeneration}
Let \(K\) be a knot type.  For every \(p\in(-1,2]\), the unconstrained \(p\)-density is independent of \(K\).  More precisely, for \(p\ne0\),
\[
\rho_p(K)
=
\pi\left(
\frac{\pi}{\int_0^\pi \sin^p\theta\,d\theta}
\right)^{1/p},
\]
and
\[
\rho_0(K)=2\pi.
\]
Equivalently, $\rho_p(K)=c_p$ throughout $-1<p\le2$.
\end{theorem}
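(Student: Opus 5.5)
The plan is to prove the two inequalities $\rho_p(K)\ge c_p$ and $\rho_p(K)\le c_p$ separately, for a fixed $p\in(-1,2]$.

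\emph{Lower bound.} Let $\gamma\in\Hg(K)$ be arbitrary and set $L=\len(\gamma)$. Being a tame $C^1$ embedding, $\gamma$ is a nonconstant rectifiable closed curve. By Lemma~\ref{lem:log-well-defined} and Proposition~\ref{prop:Dp-bounds}, $\sD_p(\gamma)$ is finite and positive, including the cases $p\le0$. Theorem~\ref{thm:mean-chord} then gives $\sD_p(\gamma)\le\sD_p(C_L)$. This holds for $p\in(-1,0)\cup(0,2]$ and, through the logarithmic statement, also for $p=0$. Since $C_L$ has the same length $L$, we get
\[
\frac{\len(\gamma)}{\sD_p(\gamma)}\ge\frac{\len(C_L)}{\sD_p(C_L)}=c_p,
\]
where the equality is Lemma~\ref{lem:round-circle-values} together with the definition \eqref{eq:circle-density-constant}. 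For $p<0$ it is worth checking the sign convention once more: the moment inequality is reversed, and the map $t\mapsto t^{1/p}$ is decreasing, so the two reversals cancel. Theorem~\ref{thm:mean-chord} already states the result in the $\sD_p$ form, so no further work is needed. Taking the infimum over $\gamma$ yields $\rho_p(K)\ge c_p$.

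\emph{Upper bound.} The round circle $C_L$ is a smooth embedded representative of the unknot, so $\rho_p(U)\le c_p$. Theorem~\ref{thm:local-knotting} gives $\rho_p(K)\le\rho_p(U)$, hence $\rho_p(K)\le c_p$. Combined with the lower bound, this gives $\rho_p(K)=c_p$ for every $K$.

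The explicit formula is just the value of $c_p$ read off from Lemma~\ref{lem:round-circle-values}. For $p\ne0$,
\[
\frac{L}{\sD_p(C_L)}=\pi\Bigl(\tfrac1\pi\int_0^\pi\sin^p\theta\,d\theta\Bigr)^{-1/p},
\]
which is the displayed expression. For $p=0$, $\sD_0(C_L)=L/(2\pi)$ gives $\rho_0(K)=2\pi$.

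There is no serious obstacle in this argument; all the analytic work sits in the cited results. The only points needing care are the applicability of Theorem~\ref{thm:mean-chord} to $C^1$ representatives in the singular range $-1<p\le0$, where finiteness and positivity come from the chord-arc estimate, and the bookkeeping of the inequality direction for negative $p$.
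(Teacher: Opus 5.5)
Your proposal is correct and follows essentially the same route as the paper: the lower bound $\rho_p(K)\ge c_p$ comes from the mean-chord inequality (Theorem~\ref{thm:mean-chord}) combined with the round-circle values of Lemma~\ref{lem:round-circle-values}, including the logarithmic case at $p=0$. The upper bound comes from the round circle as an unknot representative together with the local-knotting inequality $\rho_p(K)\le\rho_p(U)$ of Theorem~\ref{thm:local-knotting}.
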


\begin{proof}
Let \(\gamma\in\Hg(K)\) have length \(L\).  By Theorem~\ref{thm:mean-chord} and Lemma~\ref{lem:round-circle-values}, for \(p\in(-1,0)\cup(0,2]\),
\[
\sD_p(\gamma)
\le
\frac{L}{\pi}
\left(
\frac{1}{\pi}\int_0^\pi \sin^p\theta\,d\theta
\right)^{1/p}.
\]
Hence
\[
\frac{\len(\gamma)}{\sD_p(\gamma)}
\ge
\pi\left(
\frac{\pi}{\int_0^\pi \sin^p\theta\,d\theta}
\right)^{1/p}.
\]
For \(p=0\), Theorem~\ref{thm:mean-chord} and Lemma~\ref{lem:round-circle-values} give
\[
\sD_0(\gamma)\le \frac{L}{2\pi},
\]
and hence \(\len(\gamma)/\sD_0(\gamma)\ge2\pi\).  Taking the infimum over all representatives of \(K\) gives the corresponding lower bounds for \(\rho_p(K)\).

Conversely, equality is attained by a round circle in the unknot class for the lower bounds just obtained.  Thus \(\rho_p(U)\) is equal to the displayed constant.  The local-knotting degeneration theorem gives \(\rho_p(K)\le\rho_p(U)\) for every knot type \(K\).  The lower and upper bounds coincide, proving the theorem.
\end{proof}

\begin{corollary}[Attainment in the mean-chord range]\label{cor:mean-chord-attainment}
Let $-1<p\le2$.  The infimum defining $\rho_p(K)$ is attained if and only if $K=U$.  In the unknot class, every minimizer is a round circle up to similarities and reparametrization.
\end{corollary}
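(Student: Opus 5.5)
The argument has two directions: attainment in the unknot class, and non-attainment in every other class. Both come from combining Theorem~\ref{thm:mean-chord-degeneration} with the equality clause of Theorem~\ref{thm:mean-chord}.

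First, fix $p\in(-1,2]$ and suppose that some $\gamma\in\Hg(K)$ attains the infimum, so that $\len(\gamma)/\sD_p(\gamma)=\rho_p(K)$. By Theorem~\ref{thm:mean-chord-degeneration}, $\rho_p(K)=c_p$, and by Lemma~\ref{lem:round-circle-values}, $c_p=\len(C_L)/\sD_p(C_L)$ with $L=\len(\gamma)$. This gives $\sD_p(\gamma)=\sD_p(C_L)$.

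Next I would check that the equality clause of Theorem~\ref{thm:mean-chord} applies. For a tame $C^1$ embedding, the moment $\sD_p(\gamma)$ is finite and positive. For $p>0$ this is immediate. For $-1<p<0$ it follows from the local chord-arc estimate, as in Proposition~\ref{prop:Dp-bounds}. For $p=0$ it follows from Lemma~\ref{lem:log-well-defined}. The equality clause then says that the arc-length parametrized image of $\gamma$ is a round circle, and its proof covers $p=0$ through the Jensen argument. A round circle is unknotted, so $K=U$.

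Conversely, when $K=U$, any round circle lies in $\Hg(U)$ and realizes $c_p=\rho_p(U)$, so the infimum is attained. The argument above also shows that every minimizer in $\Hg(U)$ has image a round circle of length $L$. Such a curve differs from any other round circle by a similarity, and its parametrization is a reparametrization of arc length.

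The only delicate point is confirming that the strict equality statement of Theorem~\ref{thm:mean-chord} is available across the whole range, including $p=2$, the negative range and $p=0$, for curves in $\Hg(K)$. Theorem~\ref{thm:mean-chord} asserts this whenever the left-hand side is finite and positive, and for $C^1$ embeddings that condition has already been verified. So I expect no real obstacle beyond citing these facts. A minor bookkeeping point is that at $p=0$ the identification $c_0=2\pi$ comes from $\sD_0(C_L)=L/(2\pi)$.
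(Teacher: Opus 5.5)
Your proposal is correct and follows the paper's own proof: a round circle attains $c_p$ in the unknot class, and any representative of $K$ attaining $\rho_p(K)=c_p$ gives equality in Theorem~\ref{thm:mean-chord}, whose equality clause forces a round circle and hence $K=U$. Your extra check that $\sD_p(\gamma)$ is finite and positive for $C^1$ embeddings, which is needed for that equality clause to apply, is a detail the paper leaves implicit.
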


\begin{proof}
A round circle attains the value $c_p$ for the unknot.  Conversely, if a representative of $K$ attains $\rho_p(K)=c_p$, then equality holds in Theorem~\ref{thm:mean-chord}.  Its equality statement forces that representative to be a round circle, so $K=U$.
\end{proof}

\subsection{Radius-of-gyration interpretation at \texorpdfstring{$p=2$}{p=2}}

\begin{definition}
Let $\gamma\in\Hg(K)$ be parametrized by arc length on $[0,L]$.  Its center of mass is
\[
c(\gamma):=\frac1L\int_0^L\gamma(s)\,ds,
\]
and its radius of gyration is
\[
\Rg(\gamma):=
\left(
\frac1L\int_0^L |\gamma(s)-c(\gamma)|^2\,ds
\right)^{1/2}.
\]
\end{definition}

\begin{proposition}\label{prop:p=2}
For every rectifiable closed curve $\gamma$,
\[
\sD_2(\gamma)=\sqrt2\,\Rg(\gamma).
\]
\end{proposition}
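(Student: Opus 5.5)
The plan is to prove the identity by expanding the squared chord length, the classical computation behind the radius of gyration. First fix the setting. Let $\gamma$ be a nonconstant rectifiable closed curve, understood as in Remark~\ref{rem:rectifiable-extension}. Parametrize it by arc length on $[0,L]$ and let $\mu=L^{-1}ds$ be the normalized arc-length probability measure. Both sides of the identity only involve integrals of bounded continuous functions of the parametrization: $|x-y|^2$ and $|x-c|^2$ are bounded on a compact image. So all integrals are finite and Fubini's theorem applies with no integrability issues, unlike the singular range $p<0$.

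The key step is to write $|x-y|^2=|(x-c)-(y-c)|^2$ with $c=c(\gamma)$ the center of mass. Expanding gives
\[
|x-y|^2=|x-c|^2+|y-c|^2-2\langle x-c,\,y-c\rangle .
\]
Integrate against $\mu\times\mu$. Each of the first two terms contributes $\Rg(\gamma)^2$. The cross term factors by Fubini as
\[
-2\Big\langle \int (x-c)\,d\mu(x),\ \int (y-c)\,d\mu(y)\Big\rangle ,
\]
and each factor vanishes by the definition $c(\gamma)=\int x\,d\mu(x)$.

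Hence
\[
\sD_2(\gamma)^2=\frac{1}{L^2}\int_\gamma\int_\gamma|x-y|^2\,ds_x\,ds_y=2\Rg(\gamma)^2 .
\]
Taking nonnegative square roots gives $\sD_2(\gamma)=\sqrt2\,\Rg(\gamma)$.

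There is no real obstacle here. The only point needing a remark is that the integrals are taken with respect to the arc-length parametrization, even when the curve has repeated image points. This is automatic because both $\sD_2$ and $\Rg$ are defined through the same parametrization measure.
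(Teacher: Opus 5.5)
Your proof is correct and takes the same route as the paper. The paper simply cites the standard second-moment identity $\frac{1}{L^2}\int_\gamma\int_\gamma|x-y|^2\,ds_x\,ds_y=\frac{2}{L}\int_\gamma|x-c|^2\,ds$, and your expansion about the center of mass, with the cross term vanishing by the definition of $c(\gamma)$, is exactly the verification of that identity.
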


\begin{proof}
Let $L=\len(\gamma)$ and $c=c(\gamma)$.  The standard second-moment identity gives
\[
\frac{1}{L^2}\int_\gamma\int_\gamma |x-y|^2\,ds_x\,ds_y
=
\frac2L\int_\gamma |x-c|^2\,ds
=2\Rg(\gamma)^2.
\]
Taking square roots proves the claim.
\end{proof}

The identity shows that $\rho_2$ is the length-to-radius-of-gyration density, up to the factor $\sqrt2$.  Its value $\rho_2(K)=\sqrt2\pi$ is already contained in Theorem~\ref{thm:mean-chord-degeneration}; no separate $p=2$ argument is needed.

\subsection{The diameter endpoint}

\begin{theorem}[Complete degeneration at $p=\infty$]\label{thm:infty-degeneration}
For every knot type $K$,
\[
\rho_\infty(K)=2.
\]
Moreover, this infimum is not attained by any embedded $C^1$ simple closed curve.
\end{theorem}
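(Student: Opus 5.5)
The plan has three parts: a lower bound, an upper bound for the unknot pushed to all knot types, and non-attainment. The lower bound is already in hand. Proposition~\ref{prop:basic-properties}(2), i.e.\ Lemma~\ref{lem:length-diameter}, gives $\len(\gamma)\ge2\diam(\gamma)$ for every $\gamma\in\Hg(K)$. Hence $\rho_\infty(K)\ge2$.

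For the upper bound, Theorem~\ref{thm:local-knotting} with $p=\infty$ gives $\rho_\infty(K)\le\rho_\infty(U)$, so it suffices to show $\rho_\infty(U)\le2$. I will use a thin convex "stadium" curve in the plane. Take two parallel segments of length $1$ at distance $\eps$, joined at their ends by semicircles of diameter $\eps$. This is a $C^1$ (indeed $C^{1,1}$) embedded unknot. Its length is $2+\pi\eps$, and its diameter is at least $1$ (in fact at most $\sqrt{(1+\eps)^2+\eps^2}$). The ratio is therefore at most $2+\pi\eps$, and letting $\eps\to0$ gives $\rho_\infty(U)\le2$. This is the "almost doubled segment" from the introduction. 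Alternatively, one can tie $K$ directly into the stadium via Theorem~\ref{thm:local-knotting}, which needs only Hausdorff convergence, Lemma~\ref{lem:hausdorff-diameter}, and additivity of length.

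For non-attainment, suppose some embedded $C^1$ curve $\gamma$ satisfies $\len(\gamma)=2\diam(\gamma)$. Choose diametral points $x,y$. The two complementary subarcs each have length at least $|x-y|$, and equality in the sum forces both to have length exactly $|x-y|$. An arc whose length equals the distance between its endpoints is the straight segment $[x,y]$, by the equality case of the triangle inequality for rectifiable curves. So both arcs coincide with the same segment, and $\gamma$ is not injective, contradicting embeddedness. Since the infimum equals $2$, no representative attains it.

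The main obstacle I expect is only a point of care in the upper bound: keeping the approximating curves genuinely $C^1$ embedded while their length-to-diameter ratio tends to $2$. The explicit stadium handles this cleanly. Because the local-knotting step has already been proved in general, the rest of the argument is routine.
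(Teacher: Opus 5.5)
Your proof is correct and follows the paper's argument: the same length--diameter lower bound, the same thin stadium (almost doubled segment) for the upper bound, and the same equality-case argument forcing both diametral subarcs onto the segment $[x,y]$ for non-attainment. The only organizational difference is in the upper bound. You compute the stadium for $U$ and then invoke Theorem~\ref{thm:local-knotting} at $p=\infty$, which is legitimate and non-circular. The paper instead inserts a scaled knotted pattern directly into a long strand of the stadium and bounds length and diameter by hand; your ``alternative'' remark is exactly the paper's route.
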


\begin{proof}
By Lemma~\ref{lem:length-diameter},
\[
\frac{\len(\gamma)}{\diam(\gamma)}\ge2
\]
for every rectifiable simple closed curve, so $\rho_\infty(K)\ge2$.

For the reverse inequality, fix $R>0$ and let $S_R$ be the segment joining $(-R,0,0)$ to $(R,0,0)$.  Choose an embedded $C^1$ stadium curve contained in the $\eps$-neighborhood of $S_R$, consisting of two nearly straight strands from $(-R,0,0)$ to $(R,0,0)$ and two rounded ends.  It may be chosen so that its length is $4R+O(\eps)$ and its diameter is $2R+O(\eps)$.

Now choose a ball $B_\eps$ of radius $\eps$ centered on one of the long strands and replace the corresponding trivial subarc by a scaled copy of a fixed knotted-arc pattern representing $K$, with matching endpoint tangents.  The scaled pattern has length $O(\eps)$ and is contained in $B_\eps$.  After $C^1$ smoothing at the endpoints, the resulting curve $\gamma_{R,\eps}$ represents $K$ and satisfies
\[
\len(\gamma_{R,\eps})=4R+O(\eps).
\]
Moreover the whole image of $\gamma_{R,\eps}$ is contained in the $C\eps$-neighborhood of $S_R$ for a constant $C$ depending only on the fixed local pattern.  Hence
\[
2R\le \diam(\gamma_{R,\eps})\le 2R+2C\eps,
\]
and therefore $\diam(\gamma_{R,\eps})=2R+O(\eps)$.  The constant depends only on the fixed local pattern and its smoothing collars, not on $R$.  Letting $R\to\infty$ with $\eps/R\to0$ gives
\[
\frac{\len(\gamma_{R,\eps})}{\diam(\gamma_{R,\eps})}\to2.
\]
Thus $\rho_\infty(K)\le2$, and hence $\rho_\infty(K)=2$.

It remains to justify non-attainment.  If equality were attained by an embedded $C^1$ simple closed curve, choose a pair of diametral points $x,y$.  The two subarcs between $x$ and $y$ would each have length at least $|x-y|$.  Since the total length would be $2|x-y|$, both subarcs would have length exactly $|x-y|$.  A rectifiable arc whose length equals the Euclidean distance between its endpoints is the straight segment between those endpoints.  Hence both subarcs would be the same straight segment from $x$ to $y$, producing a doubled segment rather than an embedded simple closed curve.  Thus the infimum is approached only in the limiting doubled-segment degeneration and is not attained in the embedded $C^1$ class.
\end{proof}

\subsection{The finite high-exponent range \texorpdfstring{$p>2$}{p>2}}

The round-circle formula from the mean-chord range should not be extended to finite exponents $p>2$.  The knot-type question can nevertheless be settled without identifying the maximizing curve.  For a knot type $K$, set
\[
A_p(K):=
\sup_{\gamma\in\Hg(K)}
\frac{\sD_p(\gamma)}{\len(\gamma)},
\qquad
\rho_p(K)=\frac{1}{A_p(K)},
\]
and let
\[
A_p^{\mathrm{all}}:=
\sup_{\gamma}
\frac{\sD_p(\gamma)}{\len(\gamma)},
\]
where the latter supremum is taken over all tame $C^1$ embedded closed curves.

We use the following geometric convexification theorem of Sallee.  The formulation below is also recorded in the proof of Proposition~5.2 of Abrams--Cantarella--Fu--Ghomi--Howard \cite{ACFGH}.

\begin{theorem}[Sallee's chord-stretching theorem \cite{Sallee}]\label{thm:sallee-stretching}
Let $c\colon\RR/L\mathbb Z\to\RR^3$ be a closed unit-speed space curve of length $L$.  There is a closed convex unit-speed plane curve
\[
c^*\colon\RR/L\mathbb Z\to\RR^2
\]
of the same length such that
\[
|c(s)-c(t)|\le |c^*(s)-c^*(t)|
\qquad
\text{for all }s,t\in\RR/L\mathbb Z.
\]
\end{theorem}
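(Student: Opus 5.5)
This is Sallee's theorem \cite{Sallee}. The paper only needs to cite it, but here is how I would reprove it: first for polygons, then for general curves by a compactness argument.

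\emph{Polygonal case.} Fix $L$, a partition $0=s_0<s_1<\dots<s_{n}=L$ of $\RR/L\mathbb Z$, and the inscribed vertices $v_i=c(s_i)$. Let $\mathcal X$ be the set of closed polygons $(w_0,\dots,w_{n-1})$ in $\RR^3$, modulo rigid motions, satisfying
\[
|w_i-w_{i+1}|=s_{i+1}-s_i \quad\text{for all } i,
\qquad
|w_i-w_j|\ge|v_i-v_j| \quad\text{for all } i,j.
\]
This set is nonempty, since it contains $(v_i)$: each edge of the inscribed polygon is a chord, so $|v_i-v_{i+1}|\le s_{i+1}-s_i$. Strictly, the first step is to replace each edge by an arc of the prescribed length, so that the edge-length constraint can be met. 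The set is compact modulo rigid motions, so the strictly convex function $\sum_{i,j}|w_i-w_j|^2$ attains its maximum on it.

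The key claim, and the step I expect to be the main obstacle, is that a maximizer is a planar convex polygon. I would argue by contradiction with an unfolding move. Suppose the maximizer is not convex planar. Then there are two vertices $w_a,w_b$ such that the polygon splits into two arcs, and one arc does not lie in a half-plane bounded by the line $w_aw_b$ in the way convexity requires. Rotating that arc about the axis $w_aw_b$ (the classical Erd\H{o}s--Nagy flip idea) keeps every edge length fixed and every distance within an arc fixed. For the cross distances, I would choose the axis and the rotation, via a supporting-plane or extreme-vertex argument, so that every cross distance weakly increases and at least one increases strictly. That contradicts maximality.

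Making this rigorous requires choosing $a,b$ as a pair where the convex hull fails to contain the polygon as its boundary. The cleanest tool here is probably Cauchy's arm lemma applied to the two arcs. Since the flipped polygon still satisfies all the constraints, it lies in $\mathcal X$, so maximality is contradicted. The maximizer is therefore convex planar, and its vertices may be parametrized with unit speed on its edges.

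\emph{Passage to the limit.} Take nested partitions with mesh tending to zero, and let $c_n^*$ be the unit-speed convex plane polygons obtained in the polygonal step. After translating each to contain the origin, these are $1$-Lipschitz maps $\RR/L\mathbb Z\to\RR^2$. By Arzel\`a--Ascoli a subsequence converges uniformly to a $1$-Lipschitz limit $c^*$, and a uniform limit of convex closed curves is convex. For partition points $s,t$, the inequality $|c(s)-c(t)|\le|c_n^*(s)-c_n^*(t)|$ holds for all large $n$. It passes to the limit, and then extends to all $s,t$ by density of the partition points and continuity.

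Finally, $c^*$ has unit speed. On one hand, $\len(c^*)\le L$ because $c^*$ is $1$-Lipschitz. On the other hand, every inscribed polygon of $c^*$ dominates, chord by chord, the corresponding inscribed polygon of $c$, so $\len(c^*)\ge\len(c)=L$. A $1$-Lipschitz parametrization whose length equals that of its parameter circle has $|\dot c^*|=1$ almost everywhere, which gives unit speed.
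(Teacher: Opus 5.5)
The paper gives no proof of this statement. It is quoted from Sallee~\cite{Sallee}, in the form recorded in the proof of Proposition~5.2 of \cite{ACFGH}, and used as a black box. Your passage to the limit is sound: Arzel\`a--Ascoli, density of the partition points, and the length comparison forcing unit speed all work. The polygonal step, however, carries all of the content, and it has genuine gaps.

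First, you have not shown that $\mathcal X$ is nonempty. The tuple $(v_i)$ only satisfies $|v_i-v_{i+1}|\le s_{i+1}-s_i$, and replacing edges by arcs does not produce a vertex tuple with the prescribed edge lengths. This gap is repairable. Run the polygonal step on the inscribed polygon with its own edge lengths $|v_i-v_{i+1}|$, so that $(v_i)\in\mathcal X$ trivially. Then precompose the resulting convex polygon with the $1$-Lipschitz piecewise-linear map $s_i\mapsto\sum_{k<i}|v_k-v_{k+1}|$. This map differs from the identity by at most $L-\sum_k|v_k-v_{k+1}|\to0$.

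Second, and more seriously, the claim that a maximizer of $\sum|w_i-w_j|^2$ on $\mathcal X$ is convex and planar is only gestured at.

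\emph{Wrong move.} A rotation of one arc about the line $w_aw_b$ does not in general weakly increase every cross distance. The move that does is a reflection of one arc in a plane $H\ni w_a,w_b$ with all vertices on one side, since then $|x-y'|^2=|x-y|^2+4\operatorname{dist}(x,H)\operatorname{dist}(y,H)$.

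\emph{Missing step.} You would still need to show that every non-convex configuration admits such an $H$ with vertices strictly off $H$ on both arcs. That is a combinatorial argument about edges of the convex hull, not an application of Cauchy's arm lemma.

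\emph{Degenerate cases.} That claim is false for degenerate configurations. Take the collinear closed polygon with consecutive vertices $0,2,1,3$ on a line (edge lengths $2,1,2,3$). Every plane through two distinct vertices contains all four, so every isometry of an arc fixing its endpoints changes nothing. Yet this polygon is not even a degenerate convex curve, and it lies in $\mathcal X$ when $c$ is this polygon with partition points at its vertices. Collinear backtracking inside a single hull edge of an otherwise convex polygon causes the same problem. Such configurations are in fact not maximizers: the isosceles trapezoid with the same sides has diagonals $\sqrt7$ and a larger $\sum|w_i-w_j|^2$. But excluding them requires an additional global idea that your flip argument does not supply.
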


\begin{remark}\label{rem:sallee-all-exponents}
The pointwise chord inequality has the same effect on $\sD_p$ for every $p\in(-1,\infty]$: it increases positive moments, decreases negative moments before the negative $1/p$ power reverses the order, increases the logarithmic mean, and increases the diameter.  Thus Sallee's theorem gives a formal convexification principle throughout the full exponent range.  We use it below only for $p>2$.  In the nonpositive range, a degenerate convex output requires an additional uniform-integrability argument along the second diagonal of the limiting doubled segment, whereas the Exner--Harrell--Loss theorem already supplies the sharp constant and equality case directly.
\end{remark}

For the supremum over smooth unknots, it is harmless if the resulting convex plane curve has corners or is degenerate.  Such a curve can be approximated uniformly, with convergent lengths and arc-length parametrizations, by smooth strictly convex plane curves.  After rescaling, the approximating curves may be taken to have length exactly $L$.

\begin{theorem}[Complete degeneration in the finite high-exponent range]\label{thm:finite-high-degeneration}
Let $K$ be a knot type and let $2<p<\infty$.  Then
\[
A_p(K)=A_p(U)=A_p^{\mathrm{all}}.
\]
Equivalently,
\[
\rho_p(K)=\rho_p(U)=\frac{1}{A_p^{\mathrm{all}}}.
\]
In particular, the unconstrained $p$-density is independent of the knot type throughout the finite high-exponent range.
\end{theorem}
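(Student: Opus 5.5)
The proof will establish the chain
\[
A_p(U)\le A_p(K)\le A_p^{\mathrm{all}}\le A_p(U),
\]
which forces all three quantities to coincide.

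\emph{First inequality.} This is Theorem~\ref{thm:local-knotting} with the reciprocal taken. Indeed, $\rho_p(K)\le\rho_p(U)$ is equivalent to $A_p(U)\le A_p(K)$, because all quantities are finite and positive by Proposition~\ref{prop:Dp-bounds} and Proposition~\ref{prop:basic-properties}.

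\emph{Second inequality.} This is immediate, since $\Hg(K)$ is contained in the class of all tame $C^1$ embedded closed curves.

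\emph{Third inequality.} This is the real content, and it uses convexification. Fix a tame $C^1$ embedded closed curve $\gamma$ of length $L$ and take its unit-speed parametrization $c$. Theorem~\ref{thm:sallee-stretching} gives a convex unit-speed plane curve $c^*$ of the same length with $|c(s)-c(t)|\le|c^*(s)-c^*(t)|$ for all $s,t$. Since $p>0$, integrating the $p$-th power of this pointwise inequality over $(\RR/L\mathbb Z)^2$ gives
\[
\sD_p(\gamma)\le\sD_p(c^*),
\]
where $\sD_p(c^*)$ is understood as in Remark~\ref{rem:rectifiable-extension}. This is harmless: for $p>2$ the kernel is bounded and there are no integrability issues, even if $c^*$ is a doubly covered segment.

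It remains to approximate $c^*$ by smooth embedded unknots. The plan is to take smooth strictly convex plane curves $\eta_j$ of length exactly $L$ whose arc-length parametrizations converge uniformly to $c^*$, as asserted in the paragraph preceding the theorem. For the degenerate segment case, one can use thin stadia, rounded and rescaled. Uniform convergence of the parametrizations gives uniform convergence of $(s,t)\mapsto|\eta_j(s)-\eta_j(t)|^p$ on the compact parameter torus. Hence $\sD_p(\eta_j)\to\sD_p(c^*)$, by the elementary fact that the $L^p$-norm of a bounded kernel is continuous under uniform convergence. Each $\eta_j$ is a smooth planar simple closed curve, so it lies in $\Hg(U)$. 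Therefore
\[
\frac{\sD_p(\gamma)}{L}\le\lim_j\frac{\sD_p(\eta_j)}{\len(\eta_j)}\le A_p(U).
\]
Taking the supremum over $\gamma$ gives $A_p^{\mathrm{all}}\le A_p(U)$.

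\emph{Main obstacle.} I expect the approximation step to be the delicate point. One must justify that a possibly cornered or degenerate convex curve is a uniform limit, in arc-length parametrization, of smooth strictly convex curves of the same length. The plan is to use inner parallel smoothing, or convolution of the support function followed by a dilation to restore length $L$. Convexity ensures that lengths converge; this is Cauchy's formula, with length continuous under Hausdorff convergence of convex bodies. For the doubled segment, the approximation is done directly with stadia.

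\emph{Conclusion.} Finally, $\rho_p=1/A_p$ converts the equalities among the $A_p$ into the stated equalities for the densities.
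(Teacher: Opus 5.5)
Your proof is correct and follows the paper's argument. Sallee convexification, followed by approximation of the convex output by smooth strictly convex unknots (with $\sD_p$ continuity under uniform convergence of arc-length parametrizations, i.e.\ Lemma~\ref{lem:appendix-Dp-continuity-positive}), bounds everything by $A_p(U)$, and local knotting supplies the reverse inequality. Your chain $A_p(U)\le A_p(K)\le A_p^{\mathrm{all}}\le A_p(U)$ only streamlines the paper's presentation, which first proves $A_p(K)\le A_p(U)$ and then repeats the convexification step for arbitrary embedded curves.
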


\begin{proof}
Let $\gamma\in\Hg(K)$ have length $L$, and let $c$ be its arc-length parametrization.  Apply Theorem~\ref{thm:sallee-stretching} to obtain a convex plane curve $c^*$ with the same length and with no shorter corresponding chords.  Since $p>0$,
\[
\begin{aligned}
\sD_p(\gamma)^p
&=\frac{1}{L^2}\int_0^L\int_0^L
 |c(s)-c(t)|^p\,ds\,dt \\
&\le
\frac{1}{L^2}\int_0^L\int_0^L
 |c^*(s)-c^*(t)|^p\,ds\,dt
=\sD_p(c^*)^p.
\end{aligned}
\]
A smooth strictly convex plane curve is an unknot.  If $c^*$ is not smooth and strictly convex, approximate it by smooth strictly convex curves as described above.  Lemma~\ref{lem:appendix-Dp-continuity-positive} applies even when the limiting convex curve is degenerate and gives convergence of $\sD_p$.  Hence
\[
\frac{\sD_p(\gamma)}{L}
\le A_p(U).
\]
Taking the supremum over $\gamma\in\Hg(K)$ yields
\[
A_p(K)\le A_p(U).
\]
On the other hand, Theorem~\ref{thm:local-knotting} gives $\rho_p(K)\le\rho_p(U)$, or equivalently
\[
A_p(K)\ge A_p(U).
\]
Thus $A_p(K)=A_p(U)$.

The same chord-stretching and approximation argument applies to every tame $C^1$ embedded closed curve, irrespective of its knot type.  It follows that $A_p^{\mathrm{all}}\le A_p(U)$, while the reverse inequality is immediate from the definition.  Therefore
\[
A_p(K)=A_p(U)=A_p^{\mathrm{all}},
\]
and taking reciprocals completes the proof.
\end{proof}

\begin{corollary}[Complete unconstrained degeneration]\label{cor:complete-unconstrained-degeneration}
For every knot type $K$ and every $p\in(-1,\infty]$,
\[
\rho_p(K)=\rho_p(U).
\]
\end{corollary}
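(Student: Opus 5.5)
The corollary follows by splitting the exponent range $(-1,\infty]$ into three parts, each already handled by a preceding theorem.

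\emph{Mean-chord range.} For $-1<p\le2$, Theorem~\ref{thm:mean-chord-degeneration} gives $\rho_p(K)=c_p$ for every knot type $K$. Applying it to $K$ and to $U$ yields $\rho_p(K)=c_p=\rho_p(U)$. This includes the logarithmic value $p=0$, where $c_0=2\pi$.

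\emph{Finite high exponents.} For $2<p<\infty$, Theorem~\ref{thm:finite-high-degeneration} states directly that $\rho_p(K)=\rho_p(U)=1/A_p^{\mathrm{all}}$. Here $A_p^{\mathrm{all}}$ is positive and finite, because $2\le\rho_p\le\rho_p(U)<\infty$ by Proposition~\ref{prop:basic-properties}, so the reciprocal makes sense.

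\emph{Diameter endpoint.} For $p=\infty$, Theorem~\ref{thm:infty-degeneration} gives $\rho_\infty(K)=2$ for every $K$. In particular this holds for $U$, so $\rho_\infty(K)=\rho_\infty(U)$.

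The three ranges exhaust $(-1,\infty]$, which proves the corollary. No new estimate is required. The only point to check is that every exponent is covered, so that the separate treatment of $p=0$ inside the mean-chord theorem and of $p=\infty$ leave no gap.

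There is also a uniform alternative: Theorem~\ref{thm:local-knotting} gives $\rho_p(K)\le\rho_p(U)$ for all $p$, and each range supplies the reverse inequality. The piecewise citation above is the simplest route.
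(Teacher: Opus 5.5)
Your proposal is correct and matches the paper's proof, which combines Theorems~\ref{thm:mean-chord-degeneration}, \ref{thm:finite-high-degeneration}, and \ref{thm:infty-degeneration} over the same three exponent ranges. Your side remark on the positivity and finiteness of $A_p^{\mathrm{all}}$ is harmless but unnecessary, since Theorem~\ref{thm:finite-high-degeneration} already asserts $\rho_p(K)=\rho_p(U)$ directly.
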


\begin{proof}
Combine Theorems~\ref{thm:mean-chord-degeneration}, \ref{thm:finite-high-degeneration}, and \ref{thm:infty-degeneration}.
\end{proof}

\begin{remark}[Attainment for finite high exponents]\label{rem:high-p-attainment}
Let $2<p<\infty$.  If some $\gamma\in\Hg(K)$ attains the common value $\rho_p(K)$, then $K=U$.  Indeed, apply Sallee's theorem to an arc-length parametrization $c$ of $\gamma$.  The convex output $c^*$ has no shorter corresponding chord.  Approximation of $c^*$ by smooth strictly convex curves shows that $\sD_p(c^*)/\len(c^*)$ cannot exceed the global supremum attained by $\gamma$.  Hence equality holds in the integrated pointwise chord inequality.  Its nonnegative continuous integrand must vanish identically, so
\[
|c(s)-c(t)|=|c^*(s)-c^*(t)|
\qquad\text{for all }s,t.
\]
Equality of all pairwise distances makes $c$ congruent to the convex planar curve $c^*$; in particular, $\gamma$ is an unknot.

Conversely, if the convex maximizer furnished by \cite[Proposition~5.2]{ACFGH} is a nondegenerate admissible oval, then it represents $U$ and attains $\rho_p(U)$.  Under this condition, therefore, the infimum is attained if and only if $K=U$.  If the convex maximizer is degenerate or fails to belong to the admissible $C^1$ class, attainment even for $U$ requires a separate regularity analysis.
\end{remark}

\begin{remark}\label{rem:sharp-planar-problem}
Theorem~\ref{thm:finite-high-degeneration} settles knot-type independence but does not give a closed formula for the common value when $2<p<\infty$.  Proposition~5.2 of \cite{ACFGH} shows that the normalized $L^p$-mean chord functional has a maximizer in the compact class of convex plane curves.  Determining this maximizer is the sharp extremal problem posed in Question~5.1 of that paper.  Section~5 of \cite{ACFGH} also contains numerical experiments on the loss of circle optimality and on convergence toward a doubly covered segment.  The following elementary comparison makes one part of that discussion rigorous and identifies the number reported there as approximately $3.5721$.
\end{remark}

\begin{proposition}[Doubled-segment obstruction]\label{prop:doubled-segment-threshold}
Let
\[
I_p:=\int_0^\pi\sin^p\theta\,d\theta
\]
and let $p_*>2$ be the unique solution of
\begin{equation}\label{eq:circle-segment-threshold}
\pi\left(\frac{\pi}{I_p}\right)^{1/p}
=
2\left(\frac{(p+1)(p+2)}{2}\right)^{1/p}.
\end{equation}
Then
\[
p_*=3.5720244135\ldots.
\]
For every $p>p_*$, the round circle is not a global maximizer of $\sD_p/\len$ among closed curves.  More precisely,
\[
\rho_p(U)
\le
d_p
:=
2\left(\frac{(p+1)(p+2)}{2}\right)^{1/p}
<c_p.
\]
\end{proposition}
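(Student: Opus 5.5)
Compute the value of the ratio for a doubly covered segment, obtain it as a limit of embedded unknots, and compare with the circle constant $c_p$ from \eqref{eq:circle-density-constant}.

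\textbf{Step 1 (doubled segment).} Let $\sigma$ be the closed curve that traverses $[0,1]\subset\RR$ out and back, so $\len(\sigma)=2$. Its normalized arc-length measure projects to uniform measure on $[0,1]$. Hence
\[
\sD_p(\sigma)^p=\int_0^1\int_0^1|x-y|^p\,dx\,dy=\frac{2}{(p+1)(p+2)},
\]
and therefore
\[
\frac{\len(\sigma)}{\sD_p(\sigma)}=2\left(\frac{(p+1)(p+2)}{2}\right)^{1/p}=d_p .
\]

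\textbf{Step 2 (approximation by unknots).} Take thin embedded $C^1$ stadium curves $\gamma_\eps$ in the $\eps$-neighbourhood of the segment, as in the proof of Theorem~\ref{thm:infty-degeneration} but without the knotted insert. Their arc-length parametrizations converge uniformly to that of $\sigma$, and $\len(\gamma_\eps)\to2$. Since $p>0$, the kernel is bounded and continuous, so $\sD_p(\gamma_\eps)\to\sD_p(\sigma)$ by dominated convergence; Lemma~\ref{lem:appendix-Dp-continuity-positive} covers exactly this degenerate limit. Thus $\rho_p(U)\le d_p$. This is equivalent to applying the approximation remark preceding Theorem~\ref{thm:finite-high-degeneration} to the degenerate convex curve $\sigma$.

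\textbf{Step 3 (the threshold).} Set
\[
F(p):=\log c_p-\log d_p=\frac1p\Bigl[p\log\pi+\log\pi-\log I_p-p\log2-\log\tfrac{(p+1)(p+2)}{2}\Bigr]
\]
and study $G(p):=pF(p)$, whose zeros for $p>0$ coincide with those of $F$.

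\emph{Sign checks.} At $p=2$ we have $I_2=\pi/2$, so $c_2=\sqrt2\pi\approx4.443$ while $d_2=2\sqrt6\approx4.899$; hence $G(2)<0$. As $p\to\infty$, Laplace's method gives $I_p\sim\sqrt{2\pi/p}$, so $G(p)=p\log(\pi/2)-\tfrac32\log p+O(1)\to+\infty$. So a root exists in $(2,\infty)$.

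\emph{Uniqueness.} Write
\[
G(p)=p\log(\pi/2)+\log\pi-\log I_p-\log\tfrac{(p+1)(p+2)}{2}.
\]
Then
\[
G'(p)=\log(\pi/2)-\frac{I_p'}{I_p}-\frac1{p+1}-\frac1{p+2}.
\]
Here $-I_p'/I_p=-\mathbb E[\log\sin\theta]$ under the density proportional to $\sin^p\theta$, which is positive and decreasing in $p$ (log-convexity of $I_p$). This makes a clean monotonicity of $G'$ unclear, so the plan is instead:
\begin{itemize}
\item show $G'$ changes sign only once, or at least that $G$ is increasing wherever $G\ge0$ on $(2,\infty)$;
\item alternatively, use $I_{p+2}=\frac{p+1}{p+2}I_p$ together with log-convexity of $p\mapsto I_p$ to show $F$ crosses zero exactly once.
\end{itemize}

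\emph{Numerical value.} Evaluate \eqref{eq:circle-segment-threshold} numerically: write $I_p$ via Beta functions, $I_p=\sqrt\pi\,\Gamma(\tfrac{p+1}2)/\Gamma(\tfrac p2+1)$, and apply rigorous bisection with interval arithmetic to certify $p_*=3.5720244135\ldots$. For every $p>p_*$, the single sign change gives $d_p<c_p$, so $\rho_p(U)\le d_p<c_p$ and the circle is not a maximizer.

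\textbf{Main obstacle.} I expect the hardest part to be proving the uniqueness of the crossing, i.e.\ that $d_p<c_p$ holds for \emph{all} $p>p_*$ and not merely near $p_*$ and at infinity. I would first try to prove that $G$ is convex on $[2,\infty)$: combined with $G(2)<0$, convexity implies a unique zero after which $G$ stays positive. The term $-\log I_p$ is concave, since $\log I_p$ is convex, which works against convexity, while $-\log\frac{(p+1)(p+2)}{2}$ is convex. So I would need an explicit bound on $(\log I_p)''$, e.g.\ via the digamma expression $(\log I_p)''=\tfrac14[\psi'(\tfrac{p+1}2)-\psi'(\tfrac p2+1)]$, which is $O(p^{-2})$ and can be compared with $\frac1{(p+1)^2}+\frac1{(p+2)^2}$.
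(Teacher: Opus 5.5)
Your Steps 1 and 2 and the sign checks match the paper's argument; your $G$ is exactly the paper's $\log R(p)$. However, the proposal has a gap at the step that carries the analytic content. You never establish that $G$ has exactly one zero on $(2,\infty)$, i.e.\ that $d_p<c_p$ for \emph{every} $p>p_*$. The options you list (a single sign change of $G'$, or the recursion $I_{p+2}=\frac{p+1}{p+2}I_p$ combined with log-convexity) are stated as intentions, not carried out. The convexity plan stops exactly where an inequality is needed.

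The positivity of $-I_p'/I_p$ that you record is not enough on its own. It gives $G'(p)>\log\frac\pi2-\frac1{p+1}-\frac1{p+2}$, and this bound is negative near $p=2$ because $\frac13+\frac14>\log\frac\pi2\approx0.4516$. What is missing is a quantitative lower bound on $-(\log I_p)'=\frac12\bigl[\psi(\tfrac{p+2}2)-\psi(\tfrac{p+1}2)\bigr]$.

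The paper obtains this bound from the integral representation
\[
\psi\left(x+\tfrac12\right)-\psi(x)=\int_0^\infty\frac{e^{-xt}}{1+e^{-t/2}}\,dt>\frac1{2x}.
\]
With $x=\frac{p+1}2$ this gives
\[
G'(p)>\log\frac\pi2-\frac1{2(p+1)}-\frac1{p+2}\ge\log\frac\pi2-\frac16-\frac14>0\qquad(p\ge2).
\]
So $G$ is strictly increasing on $[2,\infty)$. Together with $G(2)=\log(\pi^2/12)<0$ and $G\to\infty$, it has a unique zero there and is positive beyond it.

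Your convexity route also closes with the same kind of estimate. Differentiating the representation in $x$ gives
\[
\psi'(x)-\psi'(x+\tfrac12)=\int_0^\infty\frac{te^{-xt}}{1+e^{-t/2}}\,dt<\frac1{x^2}.
\]
Hence $(\log I_p)''<\frac1{(p+1)^2}$, and therefore $G''(p)>\frac1{(p+2)^2}>0$. A strictly convex function with $G(2)<0$ has at most one zero in $(2,\infty)$ and is positive after it.

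Either way, once this inequality is inserted, the rest of your proposal goes through as in the paper. That includes the stadium approximation via Lemma~\ref{lem:appendix-Dp-continuity-positive} and the resulting bound $\rho_p(U)\le d_p<c_p$ for $p>p_*$.
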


\begin{proof}
Let $S_L$ be the degenerate closed curve of total length $L$ that traverses a line segment of length $L/2$ twice, once in each direction.  Normalized arc-length measure on $S_L$ pushes forward to uniform measure on that segment.  Hence, for $p>0$,
\[
\sD_p(S_L)^p
=
\frac{2}{(p+1)(p+2)}\left(\frac{L}{2}\right)^p,
\]
and therefore
\[
\frac{L}{\sD_p(S_L)}=d_p.
\]
By Lemma~\ref{lem:round-circle-values}, the circle density is $c_p$.  Thus equality of the two densities is exactly equation~\eqref{eq:circle-segment-threshold}.

To see uniqueness, set
\[
R(p):=
\left(\frac{\sD_p(S_L)}{\sD_p(C_L)}\right)^p
=
\frac{\pi^{p+1}2^{1-p}}{(p+1)(p+2)I_p}.
\]
Using
\[
I_p=\sqrt\pi\,
\frac{\Gamma((p+1)/2)}{\Gamma((p+2)/2)},
\]
and writing $\psi$ for the digamma function, differentiation gives
\[
\frac{d}{dp}\log R(p)
=
\log\frac{\pi}{2}
-\frac1{p+1}-\frac1{p+2}
+\frac12\left[
\psi\left(\frac{p+2}{2}\right)
-\psi\left(\frac{p+1}{2}\right)
\right].
\]
For $x>0$,
\[
\psi\left(x+\frac12\right)-\psi(x)
=
\int_0^\infty
\frac{e^{-xt}}{1+e^{-t/2}}\,dt
>
\frac1{2x}.
\]
With $x=(p+1)/2$, it follows for $p\ge2$ that
\[
\frac{d}{dp}\log R(p)
>
\log\frac{\pi}{2}
-\frac1{2(p+1)}-\frac1{p+2}
\ge
\log\frac{\pi}{2}-\frac16-\frac14
>0.
\]
Moreover, $R(2)=\pi^2/12<1$, while $R(p)\to\infty$ as $p\to\infty$.  Hence the solution is unique.  Numerical solution of the explicit monotone equation gives the stated value of $p_*$.

Finally, smooth convex stadium curves of length $L$ converge, in normalized arc-length parametrization, to $S_L$.  Lemma~\ref{lem:appendix-Dp-continuity-positive} gives convergence of their $\sD_p$ values.  For $p>p_*$ one has $d_p<c_p$, so a sufficiently thin smooth stadium has larger $\sD_p/\len$ than the circle.  This proves the strict obstruction and the bound on $\rho_p(U)$.
\end{proof}

\begin{remark}\label{rem:circle-breaking-bound}
The doubled-segment comparison does not determine the true circle-breaking exponent.  Numerical experiments in \cite[Section~5]{ACFGH} give the interval
\[
3.3<\alpha<3.5721
\]
for the loss of circle optimality.  The upper endpoint reported there is the rounded value of the explicit number $p_*$ in Proposition~\ref{prop:doubled-segment-threshold}.  The fixed-arc-length chord problem of \cite{EFH}, whose critical exponent at half-length separation is $5/2$, is related but distinct from the integrated double-chord functional considered here.
\end{remark}

For the following visualization, extend the notation
\[
d_p=2\left(\frac{(p+1)(p+2)}2\right)^{1/p}
\]
to $-1<p<\infty$, with its continuous value $d_0=2e^{3/2}$.  The same direct integration as in Proposition~\ref{prop:doubled-segment-threshold} identifies this with the doubled-segment density throughout that range.
The displayed formulas also give
\[
c_p\sim\frac{2}{p+1},
\qquad
d_p\sim\frac{4}{p+1}
\qquad (p\downarrow-1),
\]
so the doubled-segment density diverges twice as fast at the left endpoint.

\begin{figure}[H]
\centering
\begin{minipage}{0.49\textwidth}
\centering
\begin{tikzpicture}
\begin{axis}[
  width=0.94\linewidth,
  height=0.68\linewidth,
  xlabel={$p$},
  ylabel={density},
  xmin=-0.9, xmax=10,
  ymin=1.8, ymax=55,
  ymode=log,
  ytick={2,3.14159265,6.28318531,10,20,50},
  yticklabels={$2$,$\pi$,$2\pi$,$10$,$20$,$50$},
  grid=major,
  tick label style={font=\scriptsize},
  label style={font=\small},
  legend style={font=\scriptsize,draw=none,fill=none,at={(0.98,0.98)},anchor=north east},
]
\addplot[blue,thick,smooth] coordinates {
(-0.9,26.42091386) (-0.75,13.37020127) (-0.5,8.75375846)
(-0.25,7.12871657) (0,6.28318531) (0.25,5.75908691)
(0.5,5.39975831) (1,4.93480220) (1.5,4.64389595)
(2,4.44288294) (2.5,4.29471414) (3,4.18042322)
(3.3,4.12343096) (3.5720244135,4.07759433) (4,4.01459792)
(5,3.89918735) (6,3.81365617) (8,3.69438892) (10,3.61442997)
};
\addlegendentry{$c_p$}
\addplot[orange!85!black,thick,smooth] coordinates {
(-0.9,50.19113391) (-0.75,23.76493483) (-0.5,14.22222222)
(-0.25,10.78332588) (0,8.96337814) (0.25,7.82132149)
(0.5,7.03125000) (1,6.00000000) (1.5,5.34993740)
(2,4.89897949) (2.5,4.56594021) (3,4.30886938)
(3.3,4.18067550) (3.5720244135,4.07759433) (4,3.93597934)
(5,3.67683257) (6,3.48516225) (8,3.21870786) (10,3.04077572)
};
\addlegendentry{$d_p$}
\addplot[gray,densely dashed] coordinates {(-0.9,3.14159265) (10,3.14159265)};
\addplot[gray,densely dashed] coordinates {(-0.9,2) (10,2)};
\end{axis}
\end{tikzpicture}
\end{minipage}
\hfill
\begin{minipage}{0.49\textwidth}
\centering
\begin{tikzpicture}
\begin{axis}[
  width=0.94\linewidth,
  height=0.68\linewidth,
  xlabel={$p$},
  xmin=2, xmax=5,
  ymin=3.55, ymax=5.02,
  grid=major,
  tick label style={font=\scriptsize},
  label style={font=\small},
]
\addplot[blue,thick,smooth] coordinates {
(2,4.44288294) (2.5,4.29471414) (3,4.18042322)
(3.3,4.12343096) (3.5720244135,4.07759433)
(4,4.01459792) (5,3.89918735)
};
\addplot[orange!85!black,thick,smooth] coordinates {
(2,4.89897949) (2.5,4.56594021) (3,4.30886938)
(3.3,4.18067550) (3.5720244135,4.07759433)
(4,3.93597934) (5,3.67683257)
};
\addplot[gray,densely dashed] coordinates {(3.5720244135,3.55) (3.5720244135,5.02)};
\addplot[black,only marks,mark=*,mark size=1.6pt] coordinates {(3.5720244135,4.07759433)};
\node[anchor=south west,font=\scriptsize] at (axis cs:3.61,4.09) {$p_*$};
\end{axis}
\end{tikzpicture}
\end{minipage}
\caption{The round-circle density $c_p$ and doubled-segment density $d_p$.  The overview shows divergence as $p\downarrow-1$ and the distinct limits $\pi$ and $2$ as $p\to\infty$.  The right panel magnifies the crossing at $p_*=3.5720244135\ldots$.}
\label{fig:circle-segment-density}
\end{figure}

\section{Ropelength-windowed densities}\label{sec:rop-window}

\subsection{Thickness and ropelength}

\begin{definition}
Let $\mathcal C^{1,1}(K)$ denote the embedded $C^{1,1}$ representatives of $K$.  For $\gamma\in\mathcal C^{1,1}(K)$, let $\Thi(\gamma)$ denote its thickness, equivalently the supremal radius of an embedded normal tube about $\gamma$.  Define
\[
\Rop(\gamma):=\frac{\len(\gamma)}{\Thi(\gamma)},
\qquad
\Rop(K):=
\inf_{\gamma\in\mathcal C^{1,1}(K)}\Rop(\gamma).
\]
After scaling, this is equivalently the infimum of $\len(\gamma)$ among representatives satisfying $\Thi(\gamma)\ge1$.
\end{definition}

\begin{remark}
Global radius and normal injectivity give equivalent formulations of thickness at the regularity used here; see \cite{GonzalezMaddocks,LSDR}.  The compactness theories of \cite{GMSvdM,CKS} establish the existence of ideal representatives; see also Rawdon~\cite{Rawdon}.
\end{remark}

\begin{lemma}\label{lem:unknot-ropelength}
Every knot type satisfies
\[
\Rop(K)\ge2\pi,
\]
with equality if and only if $K=U$.  Moreover, if $\Thi(\gamma)\ge1$ and $\len(\gamma)=2\pi$, then $\gamma$ is a unit circle up to a rigid motion.
\end{lemma}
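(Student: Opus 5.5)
The plan is to prove the lower bound through curvature, reducing to a statement about total curvature. Normalize by scaling so that $\Thi(\gamma)\ge1$. It is standard (see \cite{GonzalezMaddocks,LSDR}) that a $C^{1,1}$ curve of thickness at least $1$ has curvature $\kappa\le1$ almost everywhere, because the thickness is bounded above by the local radius of curvature. Fenchel's theorem gives total curvature $\int_\gamma\kappa\,ds\ge2\pi$ for every closed curve. Hence
\[
\len(\gamma)\ge\int_\gamma\kappa\,ds\ge2\pi,
\]
so $\Rop(\gamma)\ge2\pi$ for every representative. Taking the infimum gives $\Rop(K)\ge2\pi$.

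For the rigidity statement, suppose $\Thi(\gamma)\ge1$ and $\len(\gamma)=2\pi$. Then both inequalities in the chain are equalities. Since $\kappa\le1$ a.e. and $\int\kappa\,ds=2\pi=\len(\gamma)$, we get $\kappa=1$ a.e. Equality in Fenchel's theorem forces $\gamma$ to be a planar convex curve; the equality case is valid for $C^{1,1}$ curves, since the tangent indicatrix is Lipschitz and the usual argument applies. A planar $C^{1,1}$ curve whose signed curvature equals $1$ a.e. (the sign is constant by convexity) has tangent angle $\theta(s)=s+\text{const}$, so integrating shows that $\gamma$ is a unit circle up to a rigid motion. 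The one point to double-check is that convexity prevents the signed curvature from flipping sign on a null set in a way that matters. Since $\theta$ is Lipschitz with $|\theta'|=1$ a.e. and is monotone for a convex curve, $\theta'=1$ a.e., and this settles it.

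For the equality characterization, the unit circle has thickness $1$ and length $2\pi$, so $\Rop(U)\le2\pi$, and therefore $\Rop(U)=2\pi$. Conversely, suppose $\Rop(K)=2\pi$. The existence of ideal representatives \cite{GMSvdM,CKS} gives some $\gamma\in\mathcal C^{1,1}(K)$ with $\Thi(\gamma)=1$ and $\len(\gamma)=\Rop(K)=2\pi$. The rigidity statement makes $\gamma$ a round circle, so $K=U$. As an alternative that avoids the compactness theorem, Fáry–Milnor gives total curvature at least $4\pi$ for nontrivial $K$. Then $\len\ge\int\kappa\,ds\ge4\pi$, so $\Rop(K)\ge4\pi>2\pi$ directly; I would present this route since it is more elementary.

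The main obstacle is justifying the pointwise bound $\kappa\le1/\Thi$ a.e. in the $C^{1,1}$ setting. I would cite it from the global-radius characterization of thickness in \cite{GonzalezMaddocks}. There, the thickness is the infimum of the radii of circles through three points of the curve, and in the limit as the three points coalesce, these radii converge to the local radius of curvature at points where $\gamma''$ exists.
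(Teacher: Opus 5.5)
Your proof is correct. For the lower bound and the rigidity clause you follow the paper's argument: $\Thi\ge1$ gives $|\kappa|\le1$ a.e., Fenchel gives $2\pi\le\int|\kappa|\,ds\le\len(\gamma)$, and equality forces a planar convex curve with $|\kappa|=1$ a.e. Your monotone Lipschitz tangent-angle argument spells out the last step, which the paper states in one line.

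The genuine difference is in the direction $\Rop(K)=2\pi\Rightarrow K=U$. The paper argues exactly as in your first version. By existence of ideal representatives, the value $2\pi$ is attained by a thickness-one curve of length $2\pi$, and the rigidity clause then makes that curve a circle.

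Your preferred route uses the F\'ary--Milnor theorem. It holds for $C^{1,1}$ curves, where Milnor's polygonal total curvature equals $\int|\kappa|\,ds$. It gives $\len(\gamma)\ge\int|\kappa|\,ds\ge4\pi$ for every thickness-one representative of a nontrivial $K$, hence $\Rop(K)\ge4\pi$. This buys two things:
\begin{itemize}
\item it avoids the compactness and existence theory of \cite{GMSvdM,CKS};
\item it yields a quantitative gap rather than a bare strict inequality. The paper later invokes the much stronger quadrisecant bound of \cite{DDS} for a related purpose.
\end{itemize}

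The paper's route needs no topological input beyond Fenchel's theorem and the rigidity statement it must prove anyway. The existence of ideal representatives is already used throughout Section~\ref{sec:rop-window}, for example for nonemptiness of the $\lambda=1$ window and in the existence theorem, so it costs nothing extra there. In either version, the rigidity argument is still needed for the ``Moreover'' clause, and you supply it.
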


\begin{proof}
The thickness bound implies $|\kappa|\le1$ almost everywhere.  Fenchel's theorem for $C^{1,1}$ closed curves therefore gives
\[
2\pi
\le
\operatorname{TC}(\gamma)
=
\int_\gamma|\kappa|\,ds
\le
\len(\gamma).
\]
A unit circle has thickness one and length $2\pi$, proving $\Rop(U)=2\pi$.  If equality holds throughout, the equality case of Fenchel's theorem makes $\gamma$ a convex planar curve, and $|\kappa|=1$ almost everywhere.  Hence it is a unit circle.  Since ropelength minimizers exist, equality $\Rop(K)=2\pi$ for a knot type is realized by such a normalized curve and therefore forces $K=U$.
\end{proof}

\subsection{Definition of the windowed invariant}

\begin{definition}\label{def:rop-windowed-density}
Let $K$ be a knot type, $p\in(-1,\infty]$, and $\lambda\ge1$.  Define the \emph{ropelength-windowed $p$-density} by
\[
\rho^{\operatorname{rop}}_{p,\lambda}(K):=
\inf\left\{
\frac{\len(\gamma)}{\sD_p(\gamma)}
\ \middle|\
\gamma\in\mathcal C^{1,1}(K),\
\Rop(\gamma)\le \lambda\Rop(K)
\right\}.
\]
\end{definition}

\begin{remark}
This formulation is scale-invariant.  Scaling each admissible curve to thickness one gives the equivalent normalized definition
\[
\rho^{\operatorname{rop}}_{p,\lambda}(K)
=
\inf\left\{
\frac{\len(\gamma)}{\sD_p(\gamma)}
\ \middle|\
\begin{gathered}
\gamma\in\mathcal C^{1,1}(K),\quad \Thi(\gamma)=1,\\
\Rop(K)\le\len(\gamma)\le\lambda\Rop(K)
\end{gathered}
\right\}.
\]
Indeed, after normalization one has $\len(\gamma)=\Rop(\gamma)$, and the lower length bound follows from the definition of $\Rop(K)$.  The window is nonempty because ropelength minimizers exist.  The case $\lambda=1$ consists exactly of normalized ropelength minimizers, while larger $\lambda$ allow controlled relaxation.
\end{remark}

\begin{proposition}\label{prop:rop-window-basic}
Let $K$ be a knot type and $p\in(-1,\infty]$.
\begin{enumerate}
\item[(1)] For every $\lambda\ge1$, $\rho^{\operatorname{rop}}_{p,\lambda}(K)$ is well defined and satisfies
\[
\rho^{\operatorname{rop}}_{p,\lambda}(K)\ge\rho_p(K)\ge2.
\]
\item[(2)] The function $\lambda\mapsto\rho^{\operatorname{rop}}_{p,\lambda}(K)$ is non-increasing on $[1,\infty)$.
\item[(3)] If $-1<p<q\le\infty$, then for every $\lambda\ge1$,
\[
\rho^{\operatorname{rop}}_{p,\lambda}(K)
\ge
\rho^{\operatorname{rop}}_{q,\lambda}(K).
\]
\end{enumerate}
\end{proposition}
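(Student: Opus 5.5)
The plan is to derive all three items directly from the definitions and the earlier elementary results, with one small regularity remark as the only point needing care.

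For (1), the admissible set must be nonempty and the ratio finite and positive. Nonemptiness follows from the existence of ropelength minimizers, cited in the remark after Definition~\ref{def:rop-windowed-density}. A minimizer $\gamma_0$ has $\Rop(\gamma_0)=\Rop(K)\le\lambda\Rop(K)$ because $\lambda\ge1$. Every admissible $\gamma$ is an embedded $C^{1,1}$ curve, hence a tame $C^1$ embedding representing $K$, so $\mathcal C^{1,1}(K)\subset\Hg(K)$. Proposition~\ref{prop:Dp-bounds} then gives $0<\sD_p(\gamma)\le\diam(\gamma)<\infty$, so each ratio $\len(\gamma)/\sD_p(\gamma)$ is a well-defined positive real number.

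The inequality $\rho^{\operatorname{rop}}_{p,\lambda}(K)\ge\rho_p(K)$ holds because the windowed infimum is taken over a subset of $\Hg(K)$. The bound $\rho_p(K)\ge2$ is Proposition~\ref{prop:basic-properties}(2).

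For (2), take $1\le\lambda\le\lambda'$. The constraint $\Rop(\gamma)\le\lambda\Rop(K)$ implies $\Rop(\gamma)\le\lambda'\Rop(K)$, so the admissible class grows with $\lambda$. An infimum over a larger set is no larger, so $\lambda\mapsto\rho^{\operatorname{rop}}_{p,\lambda}(K)$ is non-increasing.

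For (3), fix $\lambda\ge1$ and $-1<p<q\le\infty$. The admissible class does not depend on the exponent. For each admissible $\gamma$, Proposition~\ref{prop:p-monotonicity} applies, since $\gamma$ is a tame $C^1$ embedding, and gives $\sD_p(\gamma)\le\sD_q(\gamma)$. Hence $\len(\gamma)/\sD_p(\gamma)\ge\len(\gamma)/\sD_q(\gamma)$, and taking the infimum over the common class yields the claim. This is the same argument as Corollary~\ref{cor:density-p-monotonicity}.

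The only step needing any care is the inclusion $\mathcal C^{1,1}(K)\subset\Hg(K)$, which makes the earlier finiteness and monotonicity results applicable. It is immediate, since $C^{1,1}$ implies $C^1$ and every $C^1$ embedding is tame. No compactness is needed for this proposition; compactness is reserved for the existence theorem.
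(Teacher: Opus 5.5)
Your proposal is correct and follows the paper's proof essentially step for step. Item (1) comes from nonemptiness of the window via existence of ropelength minimizers and the inclusion $\mathcal C^{1,1}(K)\subset\Hg(K)$; item (2) from nesting of the admissible classes in $\lambda$; and item (3) from $p$-independence of the admissible class together with Proposition~\ref{prop:p-monotonicity}, where your explicit check of the inclusion simply spells out what the paper leaves implicit.
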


\begin{proof}
The admissible class is nonempty and all ratios are finite and positive.  It is a subclass of the unconstrained class, which proves the first inequality in (1); the second is Proposition~\ref{prop:basic-properties}.  If $\lambda_1\le\lambda_2$, the admissible class for $\lambda_1$ is contained in that for $\lambda_2$, proving (2).  Finally, the admissible class does not depend on $p$, so (3) follows from Proposition~\ref{prop:p-monotonicity} by taking infima.
\end{proof}

\subsection{Existence of minimizers}

\begin{theorem}[Existence of ropelength-windowed minimizers]\label{thm:rop-window-existence}
Let $K$ be a knot type, let $p\in(-1,\infty]$, and let $\lambda\ge1$.  Then the infimum defining $\rho^{\operatorname{rop}}_{p,\lambda}(K)$ is attained.
\end{theorem}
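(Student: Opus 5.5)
We use the direct method in the normalized formulation of the window. By scale invariance (Proposition~\ref{prop:scale-invariance-Dp}), we take a minimizing sequence $\gamma_j\in\mathcal C^{1,1}(K)$ with $\Thi(\gamma_j)=1$ and $\Rop(K)\le\len(\gamma_j)\le\lambda\Rop(K)$. We then translate each curve so that it passes through the origin and parametrize it proportionally to arc length on the fixed circle $\RR/\mathbb Z$. Thickness one gives a uniform Lipschitz bound $|\gamma_j''|\le L_j^2$ on the tangents, with $L_j$ bounded. Arzel\`a--Ascoli therefore yields a subsequence converging in $C^1$, weakly-$*$ in $W^{2,\infty}$, to a limit $\gamma$. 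We may also assume $L_j\to L\in[\Rop(K),\lambda\Rop(K)]$, and $C^1$ convergence gives $\len(\gamma)=L$.

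Next we verify admissibility of $\gamma$, using the compactness theory of thick curves \cite{GMSvdM,CKS}. Thickness is upper semicontinuous under $C^1$ convergence (for instance via the global-radius formulation \cite{GonzalezMaddocks}), so $\Thi(\gamma)\ge1$. In particular $\gamma$ is an embedded $C^{1,1}$ curve with length $L>0$. Because the uniform tubes of radius one persist, for large $j$ the curves $\gamma_j$ are isotopic to $\gamma$ (a $C^1$-close curve lies in the normal tube as a section), so $\gamma$ represents $K$. Then $\Rop(\gamma)\le L\le\lambda\Rop(K)$, and $\gamma$ is admissible.

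The key step is convergence of the ratio, namely $\sD_p(\gamma_j)\to\sD_p(\gamma)$. For $p\in(0,\infty)$ this follows from uniform convergence of the bounded kernel $|\gamma_j(s)-\gamma_j(t)|^p$, together with the convergence of the normalizations $L_j^2$. For $p=\infty$ we use Hausdorff convergence of the images and Lemma~\ref{lem:hausdorff-diameter}.

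The main obstacle is the singular range $-1<p\le0$, where we need domination near the diagonal. Thickness one supplies it uniformly: a curve of thickness at least one satisfies a uniform chord-arc bound $|\gamma_j(s)-\gamma_j(t)|\ge c\,d(s,t)$ for intrinsic distance $d(s,t)\le\delta$, with $c,\delta$ independent of $j$ (for example, from the curvature bound $\kappa\le1$ for $d\le1$, giving $|x-y|\ge\frac{2}{\pi}d$). Away from the diagonal, the global-radius bound keeps distances at least a fixed positive amount. Hence $|\gamma_j(s)-\gamma_j(t)|^p$ and $|\log|\gamma_j(s)-\gamma_j(t)||$ are dominated by an integrable multiple of $1+d(s,t)^{p}$ or $1+|\log d(s,t)|$. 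Dominated convergence (or the appendix continuity lemmas) then gives convergence of the moments and of the logarithmic integral, hence of $\sD_p$ after taking the $1/p$ power or exponentiating. Consequently $\len(\gamma)/\sD_p(\gamma)=\lim_j\len(\gamma_j)/\sD_p(\gamma_j)=\rho^{\operatorname{rop}}_{p,\lambda}(K)$, and the infimum is attained.
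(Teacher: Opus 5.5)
Your proposal is correct and follows essentially the same route as the paper: normalize to $\Thi=1$ with the length cap, extract a $C^1$-convergent subsequence using thick-curve compactness (which preserves $\Thi\ge1$, and preserves the knot type through the normal tube), then pass to the limit in $\sD_p$ using the Schur-type chord-arc bound near the diagonal and uniform separation away from it. The only difference is cosmetic: you derive the off-diagonal separation from the global-radius bound itself, whereas the paper obtains it from uniform convergence to the embedded limit (Lemma~\ref{lem:uniform-separation-away}); either argument works.
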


\begin{proof}
By scale invariance, normalize a minimizing sequence $\{\gamma_j\}$ so that
\[
\Thi(\gamma_j)=1,
\qquad
\Rop(K)\le
\len(\gamma_j)\le\lambda\Rop(K).
\]
Translate each curve so that a chosen point lies at the origin.  Since the lengths are uniformly bounded, all images lie in a fixed compact ball.  Parametrize each $\gamma_j$ by constant speed on $S^1$.  The thickness bound gives a uniform curvature bound and a uniform embedded-tube radius.  By the standard compactness theorem for thick curves with bounded length, after passing to a subsequence the curves converge in $C^1$ to an embedded $C^{1,1}$ curve $\gamma$ with
\[
\Thi(\gamma)\ge1,
\qquad
\len(\gamma)\le\lambda\Rop(K),
\]
  and the knot type is preserved; see \cite{GMSvdM,CKS,Rawdon}.  More explicitly, the closedness conclusion in the cited thick-curve compactness theorem preserves $\Thi\ge1$, while sufficiently close curves lie in a fixed normal sub-tube of the limit and are ambient isotopic to it by nearest-point projection.  The uniform bound $\len(\gamma_j)\ge2\pi$ from Fenchel's theorem prevents collapse to a point.  Thus $\gamma\in\mathcal C^{1,1}(K)$ and
\[
\Rop(\gamma)
=
\frac{\len(\gamma)}{\Thi(\gamma)}
\le
\lambda\Rop(K).
\]

The continuity of $\sD_p$ under this convergence follows from the appendix, with the exponent ranges separated as follows.  Positive moments use Lemma~\ref{lem:appendix-Dp-continuity-positive}; negative moments use Lemma~\ref{lem:appendix-Dp-continuity-negative-rigorous}; the logarithmic mean uses Lemma~\ref{lem:appendix-Dp-continuity-zero-rigorous}; and $p=\infty$ uses Lemma~\ref{lem:appendix-Dp-continuity-infty}.  To see directly that the hypotheses in the singular ranges are uniform, $\Thi(\gamma_j)\ge1$ implies $|\kappa_{\gamma_j}|\le1$ almost everywhere.  Schur comparison gives
\[
|\gamma_j(s)-\gamma_j(t)|
\ge
2\sin\!\left(\frac{d_{\gamma_j}(s,t)}2\right)
\]
whenever $d_{\gamma_j}(s,t)\le1$, and hence a uniform local chord-arc estimate; the limit tube gives uniform separation away from the circular diagonal.  The length is continuous under $C^1$ convergence.  Therefore
\[
\frac{\len(\gamma_j)}{\sD_p(\gamma_j)}
\longrightarrow
\frac{\len(\gamma)}{\sD_p(\gamma)}
\]
after passing to the minimizing subsequence, and $\gamma$ realizes the infimum.
\end{proof}

\begin{remark}
The thickness condition is precisely what fails in the local-knotting degeneration of the unconstrained problem.  A localized knot inserted in a ball of radius $r$ has thickness at most of order $r$ after smoothing.  Hence such a construction cannot remain in the normalized class $\Thi\ge1$ as $r\to0$.
\end{remark}

\subsection{Unknot detection and exhaustion}

\begin{lemma}[Restriction to positive-thickness representatives]\label{lem:smooth-restriction}
For every knot type $K$ and every $p\in(-1,\infty]$,
\[
\rho_p(K)
=
\inf_{\substack{\gamma\in\Hg(K)\\ \gamma\text{ smooth}}}
\frac{\len(\gamma)}{\sD_p(\gamma)}.
\]
Every representative in the infimum on the right has positive thickness and finite ropelength.
\end{lemma}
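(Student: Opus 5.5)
The inequality ``$\le$'' is immediate, since smooth representatives form a subclass of $\Hg(K)$. The substance is the reverse inequality, together with the regularity claim. I will handle the second assertion first. A smooth ($C^2$, in fact $C^\infty$) embedded closed curve has bounded curvature. By compactness and embeddedness it also has a positive lower bound on the distance between points at intrinsic distance at least some $\delta>0$, so its normal injectivity radius is positive. By the equivalence of thickness formulations cited from \cite{GonzalezMaddocks,LSDR}, it therefore has positive thickness. Its length is finite, hence its ropelength is finite, and it lies in $\mathcal C^{1,1}(K)$.

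For ``$\ge$'', fix $\gamma\in\Hg(K)$ and $\eps>0$. I will produce a smooth $\widetilde\gamma$ representing $K$ with $\len(\widetilde\gamma)/\sD_p(\widetilde\gamma)<\len(\gamma)/\sD_p(\gamma)+\eps$. Take an arc-length parametrization $c\colon\RR/L\mathbb Z\to\RR^3$ of class $C^1$ and mollify it: $c_h=c*\phi_h$ with a smooth periodic mollifier. Then $c_h\to c$ in $C^1$, and in particular $c_h'\to c'$ uniformly, so $|c_h'|$ is bounded away from zero for small $h$ and $c_h$ is an immersion. I also need $c_h$ to be an embedding in the same knot class. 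The local chord-arc estimate of Lemma~\ref{lem:log-well-defined}, together with $C^1$-closeness, gives $|c_h(s)-c_h(t)|\ge (c/2)\,d_L(s,t)$ for $d_L(s,t)<\delta$: the uniform derivative approximation controls short chords via $c_h(s)-c_h(t)=\int_t^s c_h'$. Off this neighborhood of the diagonal, uniform convergence and the positive minimum separation of $c$ keep the chords positive. So $c_h$ is embedded for small $h$, with uniform local chord-arc constants. For isotopy, a $C^1$-small perturbation of a $C^1$ embedding lies in a small tubular neighborhood and is isotopic to it; alternatively, one can use the standard fact that $C^1$-close embeddings are isotopic. Knot type is then preserved.

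Finally I must pass to the limit. Length converges because $c_h'\to c'$ uniformly. For $\sD_p$, I will invoke the appendix continuity lemmas, as in the proof of Theorem~\ref{thm:rop-window-existence}. Positive $p$ uses Lemma~\ref{lem:appendix-Dp-continuity-positive} and uniform convergence. Negative $p$ and $p=0$ use Lemmas~\ref{lem:appendix-Dp-continuity-negative-rigorous} and \ref{lem:appendix-Dp-continuity-zero-rigorous}, whose hypotheses are the uniform local chord-arc estimate just established plus uniform off-diagonal separation. The case $p=\infty$ uses Lemma~\ref{lem:appendix-Dp-continuity-infty} via Hausdorff convergence. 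The arc-length reparametrization of $c_h$ differs from $c_h$ by a diffeomorphism uniformly close to the identity, which does not affect these limits since $\sD_p$ depends only on the image and arc-length measure. The main obstacle is the singular range $-1<p\le 0$, where dominated convergence needs the uniform chord-arc bound near the diagonal. That is why I will derive that bound carefully from the uniform $C^1$ convergence rather than merely from embeddedness of each $c_h$.
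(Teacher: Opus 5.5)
Your proof is correct, but it takes a different route from the paper. The paper does not mollify. It approximates $\gamma$ by polygons via Corollary~\ref{cor:appendix-polygonal-full}, rounds their corners via Lemma~\ref{lem:polygonal-smoothing-ratio}, and makes a diagonal choice. Its proof is therefore a three-line reuse of machinery already built for the polygonal approximation theorem, with knot-type preservation delegated to the cited polygonal approximation results. Positive thickness is then quoted from the tubular-neighborhood theorem, which is equivalent to your curvature-plus-separation argument.

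Your one-step mollification $c_h=c*\phi_h$ is more self-contained. Uniform $C^1$ convergence gives embeddedness, convergence of length, and Hausdorff convergence. Through
\[
|c_h(s)-c_h(t)|\ge|c(s)-c(t)|-\|c_h'-c'\|_\infty\,d_L(s,t),
\]
it also gives a uniform local chord-arc constant. So the Vitali lemmas of the appendix apply directly in the singular range $-1<p\le0$, with no diagonal argument and no corner-rounding estimates.

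The price is that you must justify the isotopy yourself. Your ``small tubular neighborhood'' phrasing is not reliable for a merely $C^1$ curve. Nearest-point projection onto such a curve need not be single-valued on any neighborhood, and a curve lying in a topological tube need not be isotopic to its core. A clean justification: the linear homotopy $(1-t)c+tc_h$ passes through embeddings, by exactly your chord-arc and separation estimates (which hold uniformly in $t$); then apply isotopy extension.

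Similarly, rather than saying $\sD_p$ ``depends only on the image,'' note that the reparametrization to rescaled arc length is bi-Lipschitz with constants tending to $1$. This transfers both the uniform convergence and the chord-arc bound to the form the appendix lemmas require. The upper bound needed for $p=0$ is automatic from arc length.
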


\begin{proof}
The right-hand infimum is at least $\rho_p(K)$.  Conversely, start with any $\gamma\in\Hg(K)$.  Corollary~\ref{cor:appendix-polygonal-full} gives polygonal representatives whose lengths and $\sD_p$ values converge to those of $\gamma$.  Lemma~\ref{lem:polygonal-smoothing-ratio} then smooths each polygon with an arbitrarily small further change in the same two quantities.  A diagonal choice produces smooth representatives of $K$ whose ratios converge to that of $\gamma$.  Taking the infimum over $\gamma$ proves equality.  A smooth embedded compact curve has a positive-radius normal tube by the tubular-neighborhood theorem, hence positive thickness.
\end{proof}

\begin{theorem}[Unknot detection by ropelength windows]\label{thm:rop-window-detects-unknot}
Let $p\in(-1,2]$ and $\lambda\ge1$.  Then
\[
\rho^{\operatorname{rop}}_{p,\lambda}(K)\ge c_p,
\]
and
\[
\rho^{\operatorname{rop}}_{p,\lambda}(K)=c_p
\quad\Longleftrightarrow\quad
K=U.
\]
\end{theorem}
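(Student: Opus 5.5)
The plan has three steps: the lower bound comes from the unconstrained theory, the unknot value is realized by the round circle, and the converse uses existence of windowed minimizers together with the equality case of the mean-chord inequality.

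\emph{Lower bound.} By Proposition~\ref{prop:rop-window-basic}(1), the windowed class is a subclass of the unconstrained one, so
\[
\rho^{\operatorname{rop}}_{p,\lambda}(K)\ge\rho_p(K).
\]
Theorem~\ref{thm:mean-chord-degeneration} gives $\rho_p(K)=c_p$ for every $p\in(-1,2]$, including $c_0=2\pi$. Hence $\rho^{\operatorname{rop}}_{p,\lambda}(K)\ge c_p$ for every $K$ and every $\lambda\ge1$.

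\emph{The unknot attains $c_p$.} Lemma~\ref{lem:unknot-ropelength} gives $\Rop(U)=2\pi$, and the unit circle has $\Rop=2\pi\le\lambda\Rop(U)$. So the round circle is admissible for every $\lambda\ge1$. By Lemma~\ref{lem:round-circle-values} and \eqref{eq:circle-density-constant} its ratio equals $c_p$. Combined with the lower bound, this gives $\rho^{\operatorname{rop}}_{p,\lambda}(U)=c_p$.

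\emph{Converse.} Suppose $\rho^{\operatorname{rop}}_{p,\lambda}(K)=c_p$.
\begin{enumerate}
\item[(1)] Theorem~\ref{thm:rop-window-existence} gives a minimizer $\gamma\in\mathcal C^{1,1}(K)$ with $\len(\gamma)/\sD_p(\gamma)=c_p$.
\item[(2)] This minimizer is an embedded $C^{1,1}$ curve, hence rectifiable with finite positive $\sD_p$. Its finiteness in the negative and logarithmic ranges follows from the chord-arc estimate given by positive thickness, as in the proof of Theorem~\ref{thm:rop-window-existence}.
\item[(3)] Therefore $\sD_p(\gamma)=\sD_p(C_L)$ with $L=\len(\gamma)$, i.e.\ equality holds in Theorem~\ref{thm:mean-chord}.
\item[(4)] The equality statement there, which for $p=0$ is the separate Jensen/$p=2$ argument, forces $\gamma$ to be a round circle. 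A round circle represents the unknot, so $K=U$.
\end{enumerate}

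Everything else is bookkeeping, so the main point is the need for attainment. Without existence of minimizers, a minimizing sequence of $K$-representatives approaching a circle would give no contradiction. This is exactly how the unconstrained theory degenerates. The thickness-and-length window supplies compactness, so the infimum is a minimum and rigidity can be applied. Two details in step~(4) need checking. First, the equality case in Theorem~\ref{thm:mean-chord} must apply to a $C^{1,1}$ curve. Second, at $p=0$ there is no degenerate case: the logarithmic integral of $\gamma$ is finite by Lemma~\ref{lem:log-well-defined}.
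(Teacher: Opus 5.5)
Your proposal is correct and follows the paper's proof essentially step for step. The lower bound comes from the subclass inclusion in Proposition~\ref{prop:rop-window-basic} together with Theorem~\ref{thm:mean-chord-degeneration}, the unknot value comes from admissibility of the unit circle via Lemma~\ref{lem:unknot-ropelength}, and the converse combines the windowed minimizer of Theorem~\ref{thm:rop-window-existence} with the equality case of Theorem~\ref{thm:mean-chord}; your added check that $\sD_p$ of the minimizer is finite and positive, so that the equality clause applies, is a slightly more careful version of the same argument.
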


\begin{proof}
Proposition~\ref{prop:rop-window-basic} and Theorem~\ref{thm:mean-chord-degeneration} give
\[
\rho^{\operatorname{rop}}_{p,\lambda}(K)
\ge
\rho_p(K)
=c_p.
\]
For $K=U$, a unit circle has ropelength $2\pi=\Rop(U)$ by Lemma~\ref{lem:unknot-ropelength}, so it is admissible for every $\lambda\ge1$ and has density $c_p$.  Thus equality holds for the unknot.

Conversely, suppose equality holds for $K$.  By Theorem~\ref{thm:rop-window-existence}, there is an admissible minimizer $\gamma$ with
\[
\frac{\len(\gamma)}{\sD_p(\gamma)}=c_p.
\]
The strict equality statement in Theorem~\ref{thm:mean-chord} implies that $\gamma$ is a round circle.  Hence its knot type is $U$.
\end{proof}

\begin{corollary}\label{cor:window-endpoints}
At $\lambda=1$, the admissible unknot representatives normalized by $\Thi=1$ are precisely the unit circles.  Moreover, for every knot type $K$ and every $\lambda\ge1$,
\[
\rho^{\operatorname{rop}}_{\infty,\lambda}(K)>2.
\]
\end{corollary}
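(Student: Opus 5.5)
The corollary has two parts, and both rest on the existence result, Theorem~\ref{thm:rop-window-existence}, together with the rigidity in Lemma~\ref{lem:unknot-ropelength}.

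\emph{The case $\lambda=1$.} Since $\Rop(U)=2\pi$ by Lemma~\ref{lem:unknot-ropelength}, an admissible unknot representative normalized by $\Thi(\gamma)=1$ must satisfy $2\pi\le\len(\gamma)\le 2\pi$. Hence $\Thi(\gamma)\ge1$ and $\len(\gamma)=2\pi$. The rigidity clause of Lemma~\ref{lem:unknot-ropelength} then makes $\gamma$ a unit circle up to a rigid motion. Conversely, a unit circle has thickness one and length $2\pi=\Rop(U)$, so it is admissible.

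\emph{Strict inequality at $p=\infty$.} Fix $K$ and $\lambda\ge1$. By Theorem~\ref{thm:rop-window-existence} with $p=\infty$, the infimum is attained by some admissible $\gamma\in\mathcal C^{1,1}(K)$, so
\[
\rho^{\operatorname{rop}}_{\infty,\lambda}(K)=\frac{\len(\gamma)}{\diam(\gamma)}.
\]
Lemma~\ref{lem:length-diameter} gives the bound $\ge2$. Suppose equality held. The non-attainment argument in Theorem~\ref{thm:infty-degeneration} applies verbatim to this embedded $C^1$ curve. Take diametral points $x,y$; both complementary subarcs must have length $|x-y|$, so each is the straight segment from $x$ to $y$. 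The two subarcs then coincide, which contradicts embeddedness. Hence the inequality is strict.

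\emph{Where the care is needed.} The only delicate point is that strictness relies on attainment. Without a minimizer, a sequence could still approach $2$. Attainment is exactly what the thickness window supplies through Theorem~\ref{thm:rop-window-existence}, using $C^1$ convergence and the continuity of the diameter from Lemma~\ref{lem:appendix-Dp-continuity-infty}. So the main task is to confirm that the existence theorem covers $p=\infty$, which the statement of Theorem~\ref{thm:rop-window-existence} explicitly includes.
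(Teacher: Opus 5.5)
Your proof is correct and follows the paper's own argument. The first assertion comes from Lemma~\ref{lem:unknot-ropelength}; you make explicit the squeeze $2\pi\le\len(\gamma)\le2\pi$ that the paper leaves implicit. The strict inequality comes from applying Theorem~\ref{thm:rop-window-existence} at $p=\infty$ and then the doubled-segment equality argument from the proof of Theorem~\ref{thm:infty-degeneration}.
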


\begin{proof}
The first assertion is Lemma~\ref{lem:unknot-ropelength}.  For the second, Theorem~\ref{thm:rop-window-existence} supplies a minimizing embedded curve.  Equality in the length--diameter inequality would force that curve to be a doubled segment, contradicting embeddedness; see the final paragraph of the proof of Theorem~\ref{thm:infty-degeneration}.
\end{proof}

\begin{theorem}[Exhaustion by ropelength windows]\label{thm:lambda-limit}
For every knot type $K$ and every $p\in(-1,\infty]$,
\[
\lim_{\lambda\to\infty}
\rho^{\operatorname{rop}}_{p,\lambda}(K)
=
\rho_p(K).
\]
\end{theorem}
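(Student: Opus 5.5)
By Proposition~\ref{prop:rop-window-basic}(2), the function $\lambda\mapsto\rho^{\operatorname{rop}}_{p,\lambda}(K)$ is non-increasing. By part~(1) of the same proposition it is bounded below by $\rho_p(K)$. Hence the limit as $\lambda\to\infty$ exists and satisfies
\[
\lim_{\lambda\to\infty}\rho^{\operatorname{rop}}_{p,\lambda}(K)\ge\rho_p(K).
\]
It remains to prove the reverse inequality. I will show that for every $\eps>0$ there is some $\lambda$ with $\rho^{\operatorname{rop}}_{p,\lambda}(K)<\rho_p(K)+\eps$.

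Fix $\eps>0$. By Lemma~\ref{lem:smooth-restriction}, there is a smooth representative $\gamma\in\Hg(K)$ with
\[
\frac{\len(\gamma)}{\sD_p(\gamma)}<\rho_p(K)+\eps.
\]
The same lemma says that $\gamma$ has positive thickness, so $\Rop(\gamma)<\infty$. A smooth embedded curve is in particular $C^{1,1}$, so $\gamma\in\mathcal C^{1,1}(K)$. Since $\Rop(K)\ge2\pi>0$ by Lemma~\ref{lem:unknot-ropelength}, we may choose
\[
\lambda_\eps:=\max\{1,\Rop(\gamma)/\Rop(K)\}.
\]
Then $\gamma$ is admissible in the window for every $\lambda\ge\lambda_\eps$. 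It follows that $\rho^{\operatorname{rop}}_{p,\lambda}(K)\le\len(\gamma)/\sD_p(\gamma)<\rho_p(K)+\eps$ for all $\lambda\ge\lambda_\eps$. Letting $\eps\to0$ gives the reverse inequality. The argument is uniform in $p\in(-1,\infty]$: the windowed infimum and the unconstrained infimum use the same ratio, and only the admissible class changes.

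The main obstacle is the one already packaged in Lemma~\ref{lem:smooth-restriction}. The unconstrained class $\Hg(K)$ consists of $C^1$ curves, which may have infinite ropelength, so a near-minimizer there need not be admissible for any finite $\lambda$. The lemma handles this by approximating through polygons (Corollary~\ref{cor:appendix-polygonal-full}) followed by smoothing (Lemma~\ref{lem:polygonal-smoothing-ratio}), which keeps the ratio under control. The remaining points are routine. Smooth curves belong to $\mathcal C^{1,1}(K)$. Thickness is positive because the tubular-neighborhood theorem gives an embedded normal tube, and global-radius thickness agrees with normal-injectivity thickness at this regularity. Both facts are stated in the lemma and the remark preceding it, so I would simply cite them.
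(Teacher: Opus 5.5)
Your proposal is correct and follows the paper's proof essentially verbatim: it uses monotonicity and the lower bound from Proposition~\ref{prop:rop-window-basic}, then Lemma~\ref{lem:smooth-restriction} to produce a smooth near-minimizer of finite ropelength that is admissible once $\lambda\ge\Rop(\gamma)/\Rop(K)$. Your added remarks are harmless clarifications: the $\max\{1,\cdot\}$ is automatic since $\Rop(\gamma)\ge\Rop(K)$ for $\gamma\in\mathcal C^{1,1}(K)$, and a smooth curve is indeed $C^{1,1}$.
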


\begin{proof}
By Proposition~\ref{prop:rop-window-basic}, the left-hand side exists and is at least $\rho_p(K)$.  Fix $\eps>0$.  Lemma~\ref{lem:smooth-restriction} gives a smooth representative $\gamma$ of $K$ such that
\[
\frac{\len(\gamma)}{\sD_p(\gamma)}
<
\rho_p(K)+\eps.
\]
Its ropelength is finite.  Therefore, for every
\[
\lambda
\ge
\frac{\Rop(\gamma)}{\Rop(K)},
\]
the curve $\gamma$ is admissible in Definition~\ref{def:rop-windowed-density}, and hence
\[
\rho^{\operatorname{rop}}_{p,\lambda}(K)
\le
\frac{\len(\gamma)}{\sD_p(\gamma)}
<
\rho_p(K)+\eps.
\]
Letting first $\lambda\to\infty$ and then $\eps\to0$ proves the result.
\end{proof}

\begin{theorem}[Continuity of the windowed spectrum]\label{thm:window-p-continuity}
For fixed $K$ and $\lambda\ge1$, the function
\[
p\longmapsto\rho^{\operatorname{rop}}_{p,\lambda}(K)
\]
is continuous on $(-1,\infty]$.
\end{theorem}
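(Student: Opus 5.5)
The plan is to work in the normalized admissible class
\[
\mathcal A_\lambda:=\{\gamma\in\mathcal C^{1,1}(K):\ \Thi(\gamma)\ge1,\ \len(\gamma)\le\lambda\Rop(K)\},
\]
taken modulo translations. By the proof of Theorem~\ref{thm:rop-window-existence}, this class is compact in the $C^1$ topology of constant-speed parametrizations. Moreover, every member satisfies $2\pi\le\len(\gamma)\le\lambda\Rop(K)$, has a uniform local chord-arc estimate (via the Schur comparison $|\gamma(s)-\gamma(t)|\ge2\sin(d/2)$ for $d\le1$), and has uniform separation away from the diagonal coming from the tube of radius one. Write $F(p,\gamma):=\len(\gamma)/\sD_p(\gamma)$. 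Then $\rho^{\operatorname{rop}}_{p,\lambda}(K)=\min_{\gamma\in\mathcal A_\lambda}F(p,\gamma)$.

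The key claim is that $F$ is jointly continuous on $(-1,\infty]\times\mathcal A_\lambda$. Once this is established, the minimum of a jointly continuous function over a compact set is continuous in the parameter, by the standard Berge-type argument. For upper semicontinuity at $p_0$, take a minimizer $\gamma_0$ for $p_0$ and note $\rho_{p}\le F(p,\gamma_0)\to F(p_0,\gamma_0)$. For lower semicontinuity, take minimizers $\gamma_p$ for $p\to p_0$, extract a $C^1$-convergent subsequence with limit $\gamma_*\in\mathcal A_\lambda$, and use joint continuity to get $\liminf\rho_p\ge F(p_0,\gamma_*)\ge\rho_{p_0}$.

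Joint continuity at finite $p_0$ goes as follows. Pass to arc-length-type parameters $(s,t)\in(S^1)^2$ with $f_\gamma(s,t)=|\gamma(s)-\gamma(t)|$. The uniform chord-arc bound gives $c\,d(s,t)\le f_\gamma\le C$ uniformly on $\mathcal A_\lambda$. Fix $p$ in a compact neighbourhood $[p_0-\delta,p_0+\delta]\subset(-1,\infty)$. The family $f_\gamma^p$ and $\log f_\gamma$ is then dominated by $C(1+d^{-(1-\delta')})$, which is integrable. Since $f_{\gamma_j}\to f_\gamma$ pointwise off the diagonal under $C^1$ convergence, dominated convergence gives continuity of $(p,\gamma)\mapsto\int f_\gamma^p$ and of the log-moment. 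This is essentially the content of the appendix lemmas, applied with the exponent also varying. Across $p_0=0$, I would use the convexity/derivative argument of Proposition~\ref{prop:pzero-limit}. The quotient $(\int f^p-1)/p$ is dominated uniformly in $\gamma$ by $C(1+d^{-p_0'}|\log d|)$, so $\log\sD_p\to\log\sD_0$ uniformly on $\mathcal A_\lambda$. Length is continuous and bounded below by $2\pi$, so $F$ is jointly continuous on $(-1,\infty)\times\mathcal A_\lambda$.

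The main obstacle is the endpoint $p=\infty$, where I need $\sD_p(\gamma)\to\diam(\gamma)$ uniformly on $\mathcal A_\lambda$. Monotonicity (Proposition~\ref{prop:p-monotonicity}) gives $\sD_p\le\diam$, so only a uniform lower bound is required. Here the curvature bound helps. If $x_0=\gamma(s_0)$ and $y_0=\gamma(t_0)$ are diametral, then $|\gamma'|$ is controlled by the length, so for $|s-s_0|,|t-t_0|\le\eta$ we have $f\ge\diam-2\lambda\Rop(K)\eta$. Normalizing $S^1$ to total measure one gives
\[
\sD_p(\gamma)\ge(\eta^2)^{1/p}\bigl(\diam(\gamma)-C\eta\bigr)
\]
with $C$ independent of $\gamma$. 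Choosing $\eta$ small and then $p$ large yields uniform convergence. Combined with continuity of $\diam$ under $C^1$ (hence Hausdorff) convergence (Lemma~\ref{lem:hausdorff-diameter}), this gives joint continuity at $p=\infty$. Since the diameter is uniformly bounded below by $\len/\pi$-type bounds ($\diam\ge\sD_2$, together with $\sD_2$ being bounded below on the compact class), the denominators stay away from zero. This completes the argument.
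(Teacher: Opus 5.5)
Your proposal is correct. For finite exponents it follows the paper's proof: the same compact class $\mathcal A_\lambda(K)$, the same uniform domination of $f_\gamma^p$ by a fixed integrable power of $d$ coming from the Schur chord--arc bound and the tube separation, and the same logarithmic domination of $(f^p-1)/p$ to pass through $p=0$.

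To turn joint continuity into continuity of the minimum, the paper uses uniform continuity on $[a,b]\times\mathcal A_\lambda(K)$ and the estimate $|\min F_p-\min F_q|\le\sup|F_p-F_q|$. You use the Berge-type upper/lower semicontinuity argument with minimizers instead; the two are interchangeable.

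The genuine difference is at $p=\infty$. The paper's argument is soft: $F_n\downarrow F_\infty$ pointwise by monotonicity in the exponent, and every $F_n$ and $F_\infty$ is continuous on the compact set $\mathcal A_\lambda(K)$. Dini's theorem then gives uniform convergence, and monotonicity in $p$ passes from integer to real exponents.

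You instead prove uniform convergence $\sD_p\to\diam$ directly from the length cap. The normalized parametrizations are uniformly $\lambda\Rop(K)$-Lipschitz, so a product of $\eta$-neighbourhoods of diametral parameters gives $\sD_p(\gamma)\ge(4\eta^2)^{1/p}\bigl(\diam(\gamma)-2\lambda\Rop(K)\eta\bigr)$. Your route is more elementary and quantitative. Choosing $\eta$ of order $1/p$ gives a rate of order $\log p/p$, uniform over the window, and it does not need compactness or continuity of $F_\infty$ for the uniformity. The paper's Dini argument is shorter but gives no rate.

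Two small remarks. First, the curvature bound plays no role in that step; only the Lipschitz bound does. Second, a lower bound of ``$\len/\pi$ type'' for the diameter is not available for long thick knots. What actually keeps the denominators away from zero is your parenthetical argument, $\diam\ge\sD_2\ge\min_{\mathcal A_\lambda(K)}\sD_2>0$.
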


\begin{proof}
By scale invariance, the infimum may be taken over curves of thickness at least one.  Translate a chosen point to the origin, use constant-speed parametrizations, and set
\[
\mathcal A_\lambda(K):=
\left\{
\gamma\in\mathcal C^{1,1}(K):
\Thi(\gamma)\ge1,
\ \len(\gamma)\le\lambda\Rop(K)
\right\}.
\]
This class is compact in the $C^1$ topology by the same thick-curve compactness theorem used in Theorem~\ref{thm:rop-window-existence}.  The bound $\Thi\ge1$ supplies the uniform circular chord-arc estimate displayed in that proof, while the length cap and the choice of a base point give a common compact range.

On every compact exponent interval $[a,b]\subset(-1,\infty)$, the map
\[
(p,\gamma)
\longmapsto
\frac{\len(\gamma)}{\sD_p(\gamma)}
\]
is jointly continuous on $[a,b]\times\mathcal A_\lambda(K)$.  For fixed positive exponent, this is Lemma~\ref{lem:appendix-Dp-continuity-positive}.  If the interval meets the negative range, the uniform chord-arc estimate used in Lemma~\ref{lem:appendix-Dp-continuity-negative-rigorous} dominates all kernels by a fixed integrable power with exponent greater than $-1$.  At $p=0$, Lemma~\ref{lem:appendix-Dp-continuity-zero-rigorous} gives the corresponding logarithmic domination by $1+|\log d_{S^1}|$, and Proposition~\ref{prop:pzero-limit} supplies the matching limit in the exponent.  These estimates are uniform over $\mathcal A_\lambda(K)$.

Joint continuity on a compact product is uniform.  Therefore
\[
\left|
\min_{\gamma\in\mathcal A_\lambda(K)}
\frac{\len(\gamma)}{\sD_p(\gamma)}
-
\min_{\gamma\in\mathcal A_\lambda(K)}
\frac{\len(\gamma)}{\sD_q(\gamma)}
\right|
\le
\sup_{\gamma\in\mathcal A_\lambda(K)}
\left|
\frac{\len(\gamma)}{\sD_p(\gamma)}
-
\frac{\len(\gamma)}{\sD_q(\gamma)}
\right|
\longrightarrow0
\]
as $q\to p$.  The minima equal the windowed densities, proving continuity.

It remains to treat $p=\infty$.  For finite $p>0$, set
\[
F_p(\gamma):=\frac{\len(\gamma)}{\sD_p(\gamma)},
\qquad
F_\infty(\gamma):=\frac{\len(\gamma)}{\diam(\gamma)}.
\]
Proposition~\ref{prop:p-monotonicity} and Lemma~\ref{lem:Lp-to-diameter} show that
\[
F_n(\gamma)\downarrow F_\infty(\gamma)
\]
pointwise on $\mathcal A_\lambda(K)$ as the integers $n\to\infty$.  Every $F_n$ is continuous by the finite-exponent argument, while $F_\infty$ is continuous by Lemma~\ref{lem:appendix-Dp-continuity-infty}.  Dini's theorem therefore gives uniform convergence on the compact set $\mathcal A_\lambda(K)$.  Hence
\[
\lim_{p\to\infty}\rho^{\operatorname{rop}}_{p,\lambda}(K)
=
\rho^{\operatorname{rop}}_{\infty,\lambda}(K);
\]
monotonicity in $p$ extends the conclusion from integer to arbitrary exponents.
\end{proof}

\begin{proposition}[Right-continuity in the window]\label{prop:lambda-right-continuity}
For fixed $K$ and $p\in(-1,\infty]$, the non-increasing function
\[
\lambda\longmapsto\rho^{\operatorname{rop}}_{p,\lambda}(K)
\]
is right-continuous on $[1,\infty)$.
\end{proposition}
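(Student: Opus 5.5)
Since $\lambda\mapsto\rho^{\operatorname{rop}}_{p,\lambda}(K)$ is non-increasing by Proposition~\ref{prop:rop-window-basic}(2), the right limit $\ell:=\lim_{\mu\downarrow\lambda}\rho^{\operatorname{rop}}_{p,\mu}(K)$ exists and satisfies $\ell\le\rho^{\operatorname{rop}}_{p,\lambda}(K)$. So only the reverse inequality $\rho^{\operatorname{rop}}_{p,\lambda}(K)\le\ell$ needs proof. I will obtain it from existence of minimizers together with compactness of the normalized class, as in Theorem~\ref{thm:rop-window-existence}.

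Take a sequence $\mu_j\downarrow\lambda$. By Theorem~\ref{thm:rop-window-existence}, for each $j$ choose a minimizer $\gamma_j$ for the window $\mu_j$, normalized so that $\Thi(\gamma_j)=1$ and $\len(\gamma_j)\le\mu_j\Rop(K)\le\mu_1\Rop(K)$. Also translate each curve so that a base point sits at the origin, and parametrize it at constant speed. Then every $\gamma_j$ lies in the compact class $\mathcal A_{\mu_1}(K)$ from the proof of Theorem~\ref{thm:window-p-continuity}. After passing to a subsequence, $\gamma_j\to\gamma$ in $C^1$. The thick-curve compactness theorem cited there gives that the limit $\gamma$ is an embedded $C^{1,1}$ representative of $K$ with $\Thi(\gamma)\ge1$. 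Length is continuous under $C^1$ convergence, so
\[
\len(\gamma)=\lim_j\len(\gamma_j)\le\lim_j\mu_j\Rop(K)=\lambda\Rop(K).
\]
Hence $\Rop(\gamma)\le\len(\gamma)\le\lambda\Rop(K)$, and $\gamma$ is admissible for the window $\lambda$.

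The remaining step is continuity of the ratio $\len/\sD_p$ along this sequence. I expect this to be the only point needing care, but it is handled exactly as in Theorem~\ref{thm:rop-window-existence}. The bound $\Thi\ge1$ gives the uniform chord-arc estimate via Schur comparison. Then the appendix lemmas apply in each exponent range: Lemma~\ref{lem:appendix-Dp-continuity-positive} for $p>0$, Lemma~\ref{lem:appendix-Dp-continuity-negative-rigorous} for $-1<p<0$, Lemma~\ref{lem:appendix-Dp-continuity-zero-rigorous} for $p=0$, and Lemma~\ref{lem:appendix-Dp-continuity-infty} for $p=\infty$. Together they give $\sD_p(\gamma_j)\to\sD_p(\gamma)$. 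Therefore
\[
\rho^{\operatorname{rop}}_{p,\lambda}(K)\le\frac{\len(\gamma)}{\sD_p(\gamma)}=\lim_j\frac{\len(\gamma_j)}{\sD_p(\gamma_j)}=\lim_j\rho^{\operatorname{rop}}_{p,\mu_j}(K)=\ell,
\]
which proves right-continuity. The argument uses the window's closed constraint $\le$ in an essential way. That is why it gives right-continuity but not left-continuity: approximating from the left would require pushing a curve at ropelength exactly $\lambda\Rop(K)$ strictly inside a smaller window, and nothing above provides that.
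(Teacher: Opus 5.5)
Your proof is correct and follows essentially the same route as the paper. You take normalized minimizers for windows $\mu_j\downarrow\lambda$, extract a $C^1$-convergent subsequence in the compact class $\mathcal A_{\mu_1}(K)$, check that the limit is admissible for the window $\lambda$, and combine continuity of $\len/\sD_p$ with monotonicity in $\lambda$.
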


\begin{proof}
Let $\lambda_j\downarrow\lambda$.  For each $j$, choose a normalized minimizer $\gamma_j$ in the window $\lambda_j$.  All $\gamma_j$ lie in the compact class $\mathcal A_{\lambda_1}(K)$.  After passage to a subsequence, they converge in $C^1$ to a curve $\gamma$ of the same knot type with
\[
\Thi(\gamma)\ge1,
\qquad
\len(\gamma)\le\lambda\Rop(K).
\]
Thus $\gamma\in\mathcal A_\lambda(K)$.  The continuity lemmas used in Theorem~\ref{thm:rop-window-existence} give
\[
\lim_{j\to\infty}
\rho^{\operatorname{rop}}_{p,\lambda_j}(K)
=
\frac{\len(\gamma)}{\sD_p(\gamma)}
\ge
\rho^{\operatorname{rop}}_{p,\lambda}(K).
\]
The reverse inequality follows from monotonicity because $\lambda_j>\lambda$.  Therefore equality holds.
\end{proof}

\begin{proposition}[A ropelength growth bound at the diameter endpoint]\label{prop:diameter-window-upper-bound}
Let $\lambda\ge1$, set
\[
L:=\Rop(K),
\qquad
T:=\lambda L,
\]
and define
\[
c_{\mathrm J}:=
\sqrt{\frac83}\left(\frac34\right)^{1/3}
=1.48367\ldots.
\]
Then
\[
2<
\rho^{\operatorname{rop}}_{\infty,\lambda}(K)
\le
\frac{T}{c_{\mathrm J}T^{1/3}-2}
=O\!\left(T^{2/3}\right).
\]
\end{proposition}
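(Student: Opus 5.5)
The lower bound $\rho^{\operatorname{rop}}_{\infty,\lambda}(K)>2$ is exactly Corollary~\ref{cor:window-endpoints}, so only the upper bound needs work. For it I will exhibit a single admissible curve: a normalized ropelength minimizer $\gamma_0$ of $K$, which exists by the compactness results cited in Section~\ref{sec:rop-window}. It satisfies $\Thi(\gamma_0)=1$ and $\len(\gamma_0)=L\le\lambda L=T$, so it lies in the window for every $\lambda\ge1$. Hence
\[
\rho^{\operatorname{rop}}_{\infty,\lambda}(K)\le \frac{L}{\diam(\gamma_0)},
\]
and the task reduces to a lower bound on the diameter of a thickness-one curve in terms of its length.

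\textbf{Volume and Jung's theorem.} Let $\gamma$ have thickness one and length $\ell$. The normal tube of radius $1$ about $\gamma$ is embedded. Since $|\kappa|\le1$ almost everywhere, the tube formula (Pappus--Weyl for $C^{1,1}$ curves of curvature at most the tube radius) gives tube volume exactly $\pi\ell$.

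Let $D=\diam(\gamma)$. By Jung's theorem in $\RR^3$, the image of $\gamma$ lies in a closed ball of radius $r\le\sqrt{3/8}\,D$. The tube then lies in the concentric ball of radius $r+1$, so
\[
\pi\ell\le\tfrac43\pi\bigl(\sqrt{3/8}\,D+1\bigr)^3 .
\]
Rearranging gives
\[
D\ge \sqrt{\tfrac83}\Bigl(\bigl(\tfrac34\bigr)^{1/3}\ell^{1/3}-1\Bigr)=c_{\mathrm J}\ell^{1/3}-a,
\qquad a:=\sqrt{8/3}<2 .
\]
The right-hand side is positive for $\ell\ge2\pi$, since $c_{\mathrm J}(2\pi)^{1/3}\approx2.74>2$. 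We also have $\ell\ge2\pi$ by Lemma~\ref{lem:unknot-ropelength}.

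\textbf{Passing from $L$ to $T$.} Define $f(x)=x/(c_{\mathrm J}x^{1/3}-a)$. Then
\[
\operatorname{sign} f'(x)=\operatorname{sign}\Bigl(\tfrac23c_{\mathrm J}x^{1/3}-a\Bigr).
\]
This is positive once $x^{1/3}>3a/(2c_{\mathrm J})\approx1.65$, that is, for $x>4.5$. In particular $f$ is increasing on $[2\pi,\infty)$. Therefore
\[
\frac{L}{\diam(\gamma_0)}\le f(L)\le f(T)\le\frac{T}{c_{\mathrm J}T^{1/3}-2},
\]
where the last step uses $a<2$ and the positivity of the denominator. The bound $O(T^{2/3})$ is immediate.

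\textbf{Main obstacle.} The delicate point is the monotonicity step. If one works directly with the constant $2$ in place of $a$, the function $x/(c_{\mathrm J}x^{1/3}-2)$ is \emph{decreasing} for $x\lesssim8.2$, which includes the range near the unknot value $2\pi$. So one cannot simply bound $f(L)$ by the value at $T$ with the weaker constant. This is why I keep the sharper constant $\sqrt{8/3}$ until the final step.

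A secondary check is the exact tube-volume identity under only $C^{1,1}$ regularity. If it causes trouble, the inequality $\text{vol}\ge\pi\ell$ suffices for the argument, and it follows from the embedded normal-disk fibration together with the area formula, using $|\kappa|\le1$.
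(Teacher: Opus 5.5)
Your proof is correct, but it takes a different route from the paper's. The paper bounds the diameter of a windowed minimizer, whose length $\ell$ lies somewhere in $[L,T]$. It applies Jung's theorem to the radius-$r$ tube itself, whose diameter is at most $D+2r$, and obtains the cruder estimate $D\ge c_{\mathrm J}\ell^{1/3}-2$. The resulting function $t\mapsto t/(c_{\mathrm J}t^{1/3}-2)$ is increasing only beyond $t_0=(3/c_{\mathrm J})^3\approx8.27$. So to pass from $\ell$ to $T$ the paper must split into cases. For nontrivial $K$ it invokes the Denne--Diao--Sullivan quadrisecant bound $\Rop(K)\ge15.66>t_0$. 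For the unknot it uses the unit circle directly, together with $f(t_0)=t_0>\pi$.

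You instead apply Jung's theorem to the curve and then enlarge the enclosing ball by the tube radius. This gives the sharper additive constant $\sqrt{8/3}$ in place of $2$. Since $\bigl(3\sqrt{8/3}/(2c_{\mathrm J})\bigr)^3=9/2<2\pi$, your $f$ is increasing on all of $[2\pi,\infty)$. A single uniform argument therefore covers every knot type, with no appeal to quadrisecant ropelength bounds.

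Using the fixed competitor $\gamma_0$ rather than the windowed minimizer is also legitimate, since any admissible curve bounds the infimum. It in fact proves more than is stated: $\rho^{\operatorname{rop}}_{\infty,\lambda}(K)\le L/(c_{\mathrm J}L^{1/3}-\sqrt{8/3})$ uniformly in $\lambda$. This is consistent with the monotonicity in $\lambda$ from Proposition~\ref{prop:rop-window-basic}(2).

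One point needs more careful phrasing. For an ideal representative the closed radius-$1$ tube generally has self-contacts. You should therefore use the open tube, whose open normal disks are pairwise disjoint, or work with $r<1$ and let $r\uparrow1$ as the paper does. This does not affect the inequality.
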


\begin{proof}
Choose a minimizing curve for $\rho^{\operatorname{rop}}_{\infty,\lambda}(K)$ and normalize it to thickness one.  Write
\[
\ell:=\len(\gamma),
\qquad
D:=\diam(\gamma).
\]
Then $L\le\ell\le T$.  For $0<r<1$, the radius-$r$ normal tube is embedded and has volume $\pi r^2\ell$.  Its diameter is at most $D+2r$.  Jung's theorem in $\RR^3$ \cite{Jung} places it in a ball of radius at most
\[
\sqrt{\frac38}\,(D+2r).
\]
Comparison with the volume of this ball gives
\[
\pi r^2\ell
\le
\frac{4\pi}{3}
\left(\sqrt{\frac38}\,(D+2r)\right)^3.
\]
Letting $r\uparrow1$ yields
\[
D\ge c_{\mathrm J}\ell^{1/3}-2.
\]
The denominator is positive because $\ell\ge L\ge2\pi$.  Hence, with
\[
f(t):=\frac{t}{c_{\mathrm J}t^{1/3}-2},
\]
the chosen curve satisfies
\[
\rho^{\operatorname{rop}}_{\infty,\lambda}(K)
=\frac{\ell}{D}
\le f(\ell).
\]
The function $f$ is increasing for
\[
t>t_0:=\left(\frac{3}{c_{\mathrm J}}\right)^3
=8.27\ldots.
\]
If $K$ is nontrivial, the quadrisecant lower bound \cite{DDS} gives $L\ge15.66>t_0$, and therefore $f(\ell)\le f(T)$.  If $K=U$, the unit circle is admissible and gives
\[
\rho^{\operatorname{rop}}_{\infty,\lambda}(U)\le\pi.
\]
Moreover, $f$ has its minimum on $((2/c_{\mathrm J})^3,\infty)$ at $t_0$, and
\[
f(t_0)=t_0>\pi.
\]
Since $T\ge2\pi$, this again gives
\[
\rho^{\operatorname{rop}}_{\infty,\lambda}(U)<f(T).
\]
The strict lower bound is Corollary~\ref{cor:window-endpoints}, and the displayed asymptotic follows from the explicit formula for $f(T)$.
\end{proof}

\begin{question}\label{q:lambda-rate}
Can the rate at which $\rho^{\operatorname{rop}}_{p,\lambda}(K)$ approaches $\rho_p(K)$ be estimated in terms of $K$, $p$, or quantitative local-knotting constructions?
\end{question}

\section{Polygonal approximation of the unconstrained densities}\label{sec:polygonal-approximation}

\begin{theorem}[Polygonal approximation theorem]\label{thm:polygonal-approximation}
Let $K$ be a knot type and let $p\in(-1,\infty]$.  Then
\[
\lim_{n\to\infty}\rho_{p,n}(K)=\rho_p(K).
\]
\end{theorem}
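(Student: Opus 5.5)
By Proposition~\ref{prop:basic-properties}(3)--(4), the sequence $\rho_{p,n}(K)$ is non-increasing for $n\ge e(K)$ and bounded below by $\rho_p(K)$. Hence the limit exists and satisfies
\[
\lim_{n\to\infty}\rho_{p,n}(K)\ge\rho_p(K).
\]
So only the reverse inequality needs proof. I will show that for every $\eps>0$ there is some $n$ with $\rho_{p,n}(K)<\rho_p(K)+\eps$; monotonicity in $n$ then gives the claim for all larger $n$.

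The first step chooses a good smooth curve. Fix $\eps>0$ and use Lemma~\ref{lem:smooth-restriction} to pick a smooth representative $\gamma$ of $K$ with
\[
\frac{\len(\gamma)}{\sD_p(\gamma)}<\rho_p(K)+\eps/2 .
\]
Since $\gamma$ is smooth and embedded, it has positive thickness. In particular it satisfies a global chord-arc estimate: there is a $c>0$ with $|\gamma(s)-\gamma(t)|\ge c\,d_L(s,t)$ for all $s,t$. It also has an embedded normal tube of some radius $r_0>0$.

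The second step inscribes polygons. Let $P_n$ be the inscribed polygon with vertices $\gamma(kL/n)$. For $n$ large, each edge lies inside the normal tube and is $C^1$-close to the corresponding arc. Nearest-point projection onto $\gamma$ restricted to $P_n$ is then a homeomorphism onto $\gamma$, and the straight-line homotopy stays inside the tube. Hence $P_n$ is embedded and ambient isotopic to $\gamma$, so $P_n\in\Poly_n(K)$. Also $\len(P_n)\to\len(\gamma)$, because inscribed polygon lengths converge to the length of a $C^1$ curve.

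The third step shows $\sD_p(P_n)\to\sD_p(\gamma)$. For $p=\infty$ this follows from Hausdorff convergence and Lemma~\ref{lem:hausdorff-diameter}. For finite $p$, I will parametrize $P_n$ on $\RR/L\mathbb Z$ by the rescaled arc length of its edges, matched to the corresponding arcs of $\gamma$. These parametrizations converge uniformly to $\gamma$ and their speeds converge uniformly to $1$, so the kernels $|x-y|^p$ or $\log|x-y|$ converge pointwise off the diagonal. The main obstacle is uniform integrability near the diagonal when $-1<p\le0$. For this I want a chord-arc estimate for $P_n$ that is uniform in $n$:
\[
|P_n(s)-P_n(t)|\ge c'\,d_L(s,t).
\]
Two ingredients give it. For points on the same or adjacent edges, the turning angle at each vertex is $O(1/n)$, so adjacent edges are nearly collinear. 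For points far apart in parameter, $P_n$ is uniformly close to $\gamma$ and $\gamma$ has its chord-arc constant $c$. With this estimate, the dominating function $C(1+d_L^{\,p})$ or $C(1+|\log d_L|)$ is integrable, and dominated convergence applies. This is precisely the situation covered by the appendix continuity lemmas (and by Corollary~\ref{cor:appendix-polygonal-full}, which I may simply invoke if it is stated in this form).

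Finally, choose $n$ so large that $\len(P_n)/\sD_p(P_n)<\rho_p(K)+\eps$. Then $\rho_{p,n}(K)<\rho_p(K)+\eps$, and by monotonicity the same bound holds for all larger $n$. Letting $\eps\to0$ completes the proof.
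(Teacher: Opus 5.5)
Your proposal is correct and follows essentially the paper's argument. Proposition~\ref{prop:basic-properties}(3)--(4) gives existence of the limit and the lower bound $\rho_p(K)$, and the upper bound comes from polygonal approximants of a near-optimal representative with convergent length and $\sD_p$, combined with monotonicity in $n$. The differences are minor: the paper applies Corollary~\ref{cor:appendix-polygonal-full} directly to any $C^1$ representative and subdivides edges, whereas you first pass to a smooth representative via Lemma~\ref{lem:smooth-restriction} (a redundant detour, since that lemma's proof already uses the same corollary) and build the inscribed $n$-gon by hand. In your hand-built chord-arc sketch, you should note that the deviation $\|P_n-\gamma\|_\infty$ must be $o(1/n)$, not merely $o(1)$, to cover parameter separations between one edge length and a fixed $\delta$; this is true for the matched linear interpolation.
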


\begin{proof}
Proposition~\ref{prop:basic-properties}(4) shows that the sequence $\{\rho_{p,n}(K)\}_{n\ge e(K)}$ is non-increasing.  It is bounded below, so the limit exists.

Fix $\eps>0$.  Choose $\gamma\in\Hg(K)$ such that
\[
\frac{\len(\gamma)}{\sD_p(\gamma)}<\rho_p(K)+\eps.
\]
By Corollary~\ref{cor:appendix-polygonal-full}, there exists a sequence of embedded polygonal knots $P_m$ representing $K$ such that
\[
\len(P_m)\to\len(\gamma),
\qquad
\sD_p(P_m)\to\sD_p(\gamma).
\]
Thus, for all sufficiently large $m$,
\[
\frac{\len(P_m)}{\sD_p(P_m)}<\rho_p(K)+2\eps.
\]
Let $n_m$ be the number of edges of $P_m$.  By subdividing edges if necessary, we may assume $n_m\to\infty$ without changing length or $\sD_p$.  Hence
\[
\rho_{p,n_m}(K)\le\frac{\len(P_m)}{\sD_p(P_m)}<\rho_p(K)+2\eps.
\]
Since $\rho_{p,n}(K)$ is non-increasing in $n$, it follows that
\[
\limsup_{n\to\infty}\rho_{p,n}(K)\le\rho_p(K)+2\eps.
\]
Letting $\eps\to0$ gives the upper inequality.  The reverse inequality follows from Proposition~\ref{prop:basic-properties}(3), which says $\rho_p(K)\le\rho_{p,n}(K)$ for every $n$.  Therefore the limit equals $\rho_p(K)$.
\end{proof}

For the universal baseline below, let $\mathfrak P_n$ denote the class of nonconstant closed polygonal curves having at most $n$ edges.  Unlike $\Poly_n(K)$ from Definition~\ref{def:polygonal-knots}, this auxiliary class need not be embedded or represent a knot type; repeated edges and zero edges are allowed, with zero edges recording curves having fewer than $n$ edges.

\begin{proposition}[Universal strict fixed-edge gap]\label{prop:universal-polygon-gap}
Let $p\in(0,2]$ and $n\ge3$.  There is a universal constant
\[
m_{p,n}:=
\min_{P\in\mathfrak P_n}
\frac{\len(P)}{\sD_p(P)}
\]
such that
\[
m_{p,n}>c_p.
\]
For every knot type $K$ with $\Poly_n(K)\ne\varnothing$,
\[
\rho_{p,n}(K)\ge m_{p,n}>c_p.
\]
Moreover, $m_{p,n}$ is non-increasing in $n$ and
\[
\lim_{n\to\infty}m_{p,n}=c_p.
\]
\end{proposition}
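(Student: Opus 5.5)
The plan is a compactness argument in the finite-dimensional space of polygon vertex data, followed by the strict equality case of Theorem~\ref{thm:mean-chord} and Corollary~\ref{cor:appendix-polygonal-full} for the limit.

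\emph{Existence of the minimum.} Parametrize $\mathfrak P_n$ by vertex tuples $(v_1,\dots,v_n)\in(\RR^3)^n$, with edges $[v_i,v_{i+1}]$ and indices mod $n$. Zero edges are allowed, and the tuple must not be constant. The ratio $\len/\sD_p$ is invariant under translations and scalings (Proposition~\ref{prop:scale-invariance-Dp}). We may therefore normalize $v_1=0$ and $\len(P)=1$. This gives a compact set of tuples, since every vertex lies in the unit ball and the length-one condition rules out constant curves. Take arc-length parametrizations on a parameter circle of length one. As the vertices vary continuously with fixed total length, the parametrization $\eta_P\colon\RR/\mathbb Z\to\RR^3$ varies uniformly continuously, even when edges shrink to zero. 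For $p>0$ the kernel $|x-y|^p$ is bounded and continuous, so $\sD_p(P)^p=\iint|\eta_P(s)-\eta_P(t)|^p\,ds\,dt$ is continuous in the vertices; this is the point of restricting to $p>0$. Moreover, $\sD_p>0$ on the normalized set, since the image is not a single point. Hence $\len/\sD_p=1/\sD_p$ is continuous on a compact set and attains its minimum $m_{p,n}$. By Proposition~\ref{prop:basic-properties} the minimum is finite and at least $2$.

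\emph{Strictness.} Let $P$ be a minimizer. Theorem~\ref{thm:mean-chord} applies to every nonconstant rectifiable closed curve, degenerate ones included, and gives $\len(P)/\sD_p(P)\ge c_p$. Equality would force the arc-length image of $P$ to be a round circle. A polygon with finitely many edges, repeated or not, has image contained in finitely many segments, so it cannot be a circle. Hence $m_{p,n}>c_p$. Every $P\in\Poly_n(K)$ belongs to $\mathfrak P_n$, so $\rho_{p,n}(K)\ge m_{p,n}$.

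\emph{Monotonicity and the limit.} Since $\mathfrak P_n\subset\mathfrak P_{n+1}$ by adding a zero edge, $m_{p,n}$ is non-increasing in $n$. For the limit, let $Q_n$ be the regular $n$-gon inscribed in a circle. These are embedded unknots, and by Corollary~\ref{cor:appendix-polygonal-full}, or directly by uniform convergence of the arc-length parametrizations together with the positive-exponent continuity Lemma~\ref{lem:appendix-Dp-continuity-positive}, we get $\len(Q_n)/\sD_p(Q_n)\to c_p$. Therefore $c_p<m_{p,n}\le\len(Q_n)/\sD_p(Q_n)\to c_p$, and the squeeze gives the limit.

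\emph{Main obstacle.} The delicate step is the continuity of $\sD_p$ on the compactified vertex space, including collapsing edges and overlapping edges. For $p>0$ I would handle it cleanly by using the uniform convergence of normalized arc-length parametrizations together with the bounded kernel. The nonpositive range is excluded precisely because overlapping edges can make negative moments blow up.
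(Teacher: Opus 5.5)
Your proof is correct and follows essentially the paper's route: compactness of the length-normalized, base-pointed configuration space (you use vertex tuples where the paper uses edge vectors in $X_n$), continuity of $\sD_p$ for $p>0$, the strict equality case of Theorem~\ref{thm:mean-chord}, the inclusion $\mathfrak P_n\subset\mathfrak P_{n+1}$, and regular $n$-gons with Lemma~\ref{lem:appendix-Dp-continuity-positive} for the limit. The differences are only cosmetic. You obtain continuity from uniform convergence of the arc-length parametrizations rather than from the paper's explicit edge-pair integral. Your side remark $m_{p,n}\ge2$ should be credited to $\sD_p\le\diam$ and $\len\ge2\diam$ for arbitrary closed curves, not to Proposition~\ref{prop:basic-properties}, which concerns knot representatives.
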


\begin{proof}
By scale invariance, normalize to length one and fix the initial vertex at the origin.  A possibly degenerate closed polygonal curve with at most $n$ edges is represented by edge vectors in the compact set
\[
X_n:=
\left\{
(v_1,\ldots,v_n)\in(\RR^3)^n:
\sum_{i=1}^n v_i=0,\quad
\sum_{i=1}^n|v_i|=1
\right\};
\]
zero vectors record unused edges.  If $x_i=\sum_{k<i}v_k$, then
\[
\sD_p(P)^p
=
\sum_{i,j=1}^n |v_i||v_j|
\int_0^1\int_0^1
\left|x_i+t v_i-x_j-s v_j\right|^p\,dt\,ds.
\]
For $p>0$, this is a continuous function of the edge vectors, including when some $v_i$ vanish.  It is positive on $X_n$ because the total length is one.  Hence $\len(P)/\sD_p(P)=1/\sD_p(P)$ has a minimum on $X_n$.

Theorem~\ref{thm:mean-chord} gives $\len(P)/\sD_p(P)\ge c_p$.  Equality would force the polygonal curve to have the image of a round circle, which no finite polygonal curve can do.  Since the minimum is attained, it follows that $m_{p,n}>c_p$.  Every member of $\Poly_n(K)$, after length normalization, lies in $X_n$, which proves the knot-independent bound.

Adding a zero edge or subdividing an edge shows $m_{p,n+1}\le m_{p,n}$.  Regular planar $n$-gons converge in arc-length parametrization and length to a round circle, so Lemma~\ref{lem:appendix-Dp-continuity-positive} gives
\[
\limsup_{n\to\infty}m_{p,n}\le c_p.
\]
The reverse inequality holds for every $n$, completing the proof.
\end{proof}

\begin{remark}
Because $\mathfrak P_n$ contains degenerate curves, the universal baseline can be determined by a lower-edge degeneration when $n$ is small.  In particular, $\mathfrak P_2\subset\mathfrak P_n$ consists of doubled segments, so, with the same definition of $m_{p,2}$,
\[
m_{p,n}\le m_{p,2}=d_p.
\]
This already rules out the regular triangle at $p=2$.  Indeed, a regular triangle $R_3$ of side length $a$ has radius of gyration $a/\sqrt6$, so Proposition~\ref{prop:p=2} gives
\[
\frac{\len(R_3)}{\sD_2(R_3)}=3\sqrt3
>2\sqrt6=d_2.
\]
Thus $m_{2,3}$ cannot be realized by $R_3$.

Although the limiting value is the degenerate unconstrained constant, the finite sequence
\[
\{\rho_{p,n}(K)\}_{n\ge e(K)}
\]
may retain knot information above the universal baseline $m_{p,n}$: a localized knot insertion consumes edges, especially near the stick number $e(K)$.  L\"uk\H{o}'s work \cite{Luko} includes related discrete mean-chord extremal results for equilateral polygons, while Proposition~\ref{prop:universal-polygon-gap} allows arbitrary edge lengths.

A polygonal analogue of $\rho^{\operatorname{rop}}_{p,\lambda}$ would instead involve polygonal thickness or another discrete tube-radius model.  Establishing convergence of that constrained polygonal theory is a separate problem, related to but distinct from the fixed-edge profile above and from the lattice models in \cite{OzawaMoveGraphs,OzawaDiscreteDensity}.

The restriction $p>0$ in Proposition~\ref{prop:universal-polygon-gap} is used in the continuity argument on the compact degenerate class $X_n$.  Extending the proposition to $-1<p<0$ would require a separate uniform-integrability estimate near overlapping edges; the chord-arc estimates in the appendix do not apply directly to these degenerations.
\end{remark}

For $n\ge3$, let $R_n$ denote a planar regular $n$-gon and set
\[
r_n(p):=\frac{\len(R_n)}{\sD_p(R_n)}.
\]

\begin{question}\label{q:polygonal-universal-minimizer}
For $p\in(0,2]$ and $n\ge3$, does
\[
m_{p,n}\overset{?}{=}\min\{d_p,r_n(p)\}
\]
hold?  In particular, whenever $r_n(p)<d_p$, is $m_{p,n}$ realized by the planar regular $n$-gon $R_n$?  More generally, does the excess
\[
\rho_{p,n}(K)-m_{p,n}
\]
distinguish knot types at some fixed edge levels?
\end{question}

\section{Questions and further directions}\label{sec:questions}

The unconstrained theory therefore collapses for every admissible exponent.  Here the full range is $p\in(-1,\infty]$.  In contrast, the ropelength-windowed theory has minimizers.  We close with a short list of directions where additional estimates would be needed before the framework yields stronger geometric and knot-theoretic information.

\subsection{The sharp finite high-exponent extremal problem}

Knot-type independence is complete for finite $p>2$, but the common value is generally not explicit.  By Theorem~\ref{thm:finite-high-degeneration}, its determination is equivalent to a purely planar convex extremal problem.

\begin{question}[Sharp planar convex extremizers]\label{q:sharp-planar-extremizer}
For each finite $p>2$, which convex plane curves maximize
\[
\frac{\sD_p(\gamma)}{\len(\gamma)}?
\]
Is the maximizer unique up to similarities and reparametrization, and what regularity does it have?  How does it depend on $p$?  In particular, determine the critical behavior at which the round circle loses global optimality and describe the convergence of maximizers toward the doubly covered segment as $p\to\infty$.
\end{question}

\subsection{Ropelength-windowed spectra}

For fixed $K$ and $\lambda$, the function $p\mapsto \rho^{\operatorname{rop}}_{p,\lambda}(K)$ is the constrained analogue of the unconstrained $p$-spectrum.  Proposition~\ref{prop:rop-window-basic} and Theorem~\ref{thm:window-p-continuity} show that it is non-increasing and continuous on $(-1,\infty]$.  In the sharp mean-chord range, Theorem~\ref{thm:rop-window-detects-unknot} gives the much stronger bound
\[
\rho^{\operatorname{rop}}_{p,\lambda}(K)\ge c_p
\ge c_2=\sqrt2\pi
\qquad (-1<p\le2),
\]
and equality in the first inequality detects the unknot.  Thus the elementary lower bound by $2$ is relevant mainly near the diameter endpoint; even there, Corollary~\ref{cor:window-endpoints} gives strict inequality for every $\lambda\ge1$.  At $\lambda=1$, the problem restricts to normalized ropelength minimizers, so $\rho^{\operatorname{rop}}_{p,1}(K)$ records geometric features of the ideal stratum rather than a localized-knotting collapse.

\begin{question}\label{q:rop-spectrum}
Beyond the sharp bound for $p\le2$, can one obtain lower bounds in terms of geometric or topological quantities such as ropelength, trunk, representativity, bridge number, or bridge distance?
\end{question}

Proposition~\ref{prop:diameter-window-upper-bound} sets the possible growth scale for such estimates at $p=\infty$.  With
\[
T=\lambda\Rop(K),
\]
it gives
\[
\rho^{\operatorname{rop}}_{\infty,\lambda}(K)
=O\!\left(T^{2/3}\right).
\]
In particular, at any fixed window no lower bound proportional to $\Rop(K)$ can hold in general.  Known lower bounds for the ropelength of nontrivial knot types, including the quadrisecant bounds of Denne--Diao--Sullivan \cite{DDS}, provide a natural starting point, but do not by themselves control the diameter of a constrained minimizing representative.

Proposition~\ref{prop:lambda-right-continuity} proves right-continuity in the window parameter.  Left-continuity is subtler because a new minimizing branch may enter exactly when its ropelength reaches the boundary of the window.

\begin{question}\label{q:lambda-left-continuity}
Is the function
\[
\lambda\longmapsto\rho^{\operatorname{rop}}_{p,\lambda}(K)
\]
left-continuous?  If jumps occur, which geometric transitions among constrained ideal representatives do their locations record?
\end{question}

The present results separate the unknot from every nontrivial knot type, but they do not yet separate two nontrivial knot types.

\begin{question}\label{q:nontrivial-separation}
Do some parameters $p$ and $\lambda$ make these windowed densities distinguish prescribed nontrivial knot types?  More strongly, does the full two-parameter profile
\[
\left\{\rho^{\operatorname{rop}}_{p,\lambda}(K)\right\}_{p,\lambda}
\]
determine any standard knot-theoretic information beyond unknot detection?
\end{question}

\subsection{Discrete constrained theories}

\begin{question}\label{q:polygonal-windowed}
Can one define polygonal ropelength-windowed densities using polygonal thickness so that the resulting discrete quantities converge to $\rho^{\operatorname{rop}}_{p,\lambda}(K)$ as the number of edges tends to infinity?
\end{question}

\subsection{Regularized inverse-power extensions}

For $p\le -1$, the unregularized pairwise-distance kernel is not integrable near the diagonal.  A continuation beyond this threshold should therefore use regularized inverse-power energies, in the spirit of O'Hara energies, M\"obius energy, and Menger-curvature knot energies \cite{Ohara,Ohara2,Ohara3,OharaBook,BrysonFHW,FHW,SvdM,SSvdM}.

\begin{question}\label{q:strongly-repulsive}
Can one construct a unified regularized theory extending the present family beyond $p=-1$, so that the positive side measures spatial spread while the strongly negative side captures self-repulsive relaxation?
\end{question}


\appendix

\section{Technical lemmas}\label{app:technical}

This appendix collects the technical facts used in the proof of the approximation theorem.  The only point requiring some care is that all local estimates must be made on the parameter circle, not merely on the square $[0,1]^2$ with the ordinary distance.  We therefore write
\[
d_{S^1}(u,v)=\min\{|u-v|,1-|u-v|\},\qquad u,v\in[0,1]/0\sim1,
\]
and use the Lebesgue measure $du$ on this model of $S^1$, normalized so that $\int_{S^1}du=1$.  For an arc-length parameter circle of total length $L$, write
\[
d_L(s,t)=\min_{k\in\mathbb Z}|s-t+kL|.
\]

\begin{lemma}\label{lem:hausdorff-diameter}
Let $\{X_m\}_{m\ge 1}$ be a sequence of nonempty compact subsets of $\RR^3$ converging to a nonempty compact set $X$ in the Hausdorff metric.
Then
\[
\diam(X_m)\to\diam(X).
\]
\end{lemma}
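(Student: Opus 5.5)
The plan is to show that the diameter is $2$-Lipschitz with respect to the Hausdorff metric $d_H$. Concretely, I would prove
\[
|\diam(X_m)-\diam(X)|\le 2\,d_H(X_m,X),
\]
from which the claimed convergence is immediate.

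Write $\eps_m:=d_H(X_m,X)$. For the upper bound on $\diam(X_m)$, I would proceed as follows.
\begin{enumerate}
\item Since $X_m$ is compact and nonempty, choose a diametral pair $x,y\in X_m$ with $|x-y|=\diam(X_m)$.
\item By the definition of the Hausdorff distance, and compactness of $X$, pick $x',y'\in X$ with $|x-x'|\le\eps_m$ and $|y-y'|\le\eps_m$.
\item The triangle inequality then gives
\[
\diam(X_m)=|x-y|\le|x'-y'|+2\eps_m\le\diam(X)+2\eps_m .
\]
\end{enumerate}
The same argument with the roles of $X_m$ and $X$ exchanged gives $\diam(X)\le\diam(X_m)+2\eps_m$. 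Combining the two inequalities yields the Lipschitz estimate, and letting $\eps_m\to0$ proves $\diam(X_m)\to\diam(X)$.

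There is no serious obstacle. The only point needing care is that the infimum defining the Hausdorff distance need not be attained by a specific radius. If one prefers to avoid compactness in step (2), it suffices to use points within $\eps_m+\delta$ instead of $\eps_m$ for arbitrary $\delta>0$, and then let $\delta\to0$.

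Finiteness of all the diameters involved follows from compactness, and nonemptiness guarantees that the diametral pairs in step (1) exist.
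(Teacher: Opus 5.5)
Your proof is correct. The paper gives no argument of its own for this lemma and simply cites the standard fact from \cite{BBI01}, and your estimate $|\diam(X_m)-\diam(X)|\le 2\,d_H(X_m,X)$, proved with diametral pairs, nearest points and the triangle inequality, is exactly the standard argument behind that citation; it is self-contained and has the small bonus of being quantitative.
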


\begin{proof}
This is standard; see, for example, \cite{BBI01}.
\end{proof}

\begin{lemma}\label{lem:appendix-lsc-length}
Let $\gamma_m\colon S^1\to\RR^3$ be rectifiable closed curves converging uniformly to a rectifiable curve $\gamma$.
Then
\[
\len(\gamma)\le \liminf_{m\to\infty}\len(\gamma_m).
\]
\end{lemma}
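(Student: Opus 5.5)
The plan is to use the definition of length as a supremum of inscribed polygonal lengths, together with the fact that for a fixed finite partition this quantity is continuous under uniform convergence. Fix $\eps>0$. Since $\gamma$ is rectifiable, $\len(\gamma)$ is the supremum, over finite partitions $0=u_0<u_1<\cdots<u_k=1$ of the parameter circle, of
\[
\Lambda(\gamma;u_0,\ldots,u_k):=\sum_{i=1}^k|\gamma(u_i)-\gamma(u_{i-1})|.
\]
First I choose such a partition with $\Lambda(\gamma;u_\bullet)>\len(\gamma)-\eps$. If $\len(\gamma)=\infty$, the same choice works with $\Lambda(\gamma;u_\bullet)$ larger than any prescribed bound. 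This case does not arise under the standing rectifiability hypothesis, but the argument covers it anyway.

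Next I use that the partition is finite. For this fixed partition, the map $\eta\mapsto\Lambda(\eta;u_\bullet)$ involves only the $k+1$ point values $\eta(u_i)$. Uniform convergence gives $\gamma_m(u_i)\to\gamma(u_i)$ for each $i$, and so by the triangle inequality
\[
|\Lambda(\gamma_m;u_\bullet)-\Lambda(\gamma;u_\bullet)|\le 2k\sup_{S^1}|\gamma_m-\gamma|\to0 .
\]
In fact only pointwise convergence at the finitely many partition points is used.

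Finally, inscribed polygon length never exceeds curve length, so $\len(\gamma_m)\ge\Lambda(\gamma_m;u_\bullet)$ for every $m$. Taking $\liminf$ gives
\[
\liminf_{m\to\infty}\len(\gamma_m)\ge\Lambda(\gamma;u_\bullet)>\len(\gamma)-\eps,
\]
and letting $\eps\to0$ completes the argument.

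There is no real obstacle. The only point needing care is that length is defined parametrization-independently as a supremum over inscribed polygons on the common parameter circle $S^1$. Uniform convergence is stated with respect to that common parametrization, so the partition points can be used simultaneously for all $\gamma_m$.
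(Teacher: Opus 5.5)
Your proof is correct. Fixing a near-optimal inscribed partition for $\gamma$, passing to the limit in the finitely many chord lengths, and using $\len(\gamma_m)\ge\Lambda(\gamma_m;u_\bullet)$ is exactly the standard lower-semicontinuity argument. The paper writes out no argument of its own and simply cites this fact from Burago--Burago--Ivanov, so your write-up supplies the proof behind that citation (and, as you note, it needs only pointwise convergence at the partition points).
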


\begin{proof}
This is the standard lower semicontinuity of length under uniform convergence; see, for example, \cite{BBI01}.
\end{proof}

\begin{lemma}\label{lem:local-chord-arc}
Let $\gamma\colon S^1\to\RR^3$ be a tame $C^1$ embedded closed curve, and let
\[
\widetilde\gamma\colon\RR/L\mathbb Z\to\RR^3
\]
be an arc-length parametrization.  Then there exist constants $c_0,\delta_0>0$ such that
\[
|\widetilde\gamma(s)-\widetilde\gamma(t)|\ge c_0\,d_L(s,t)
\]
whenever $d_L(s,t)\le\delta_0$.  Moreover,
\[
|\widetilde\gamma(s)-\widetilde\gamma(t)|\le d_L(s,t)
\]
for all $s,t$.

Sufficiently fine inscribed polygonal approximations of $\gamma$ inside shrinking tubular neighborhoods may be chosen so that, after rescaled arc-length parametrization on $S^1$, the same type of estimate
\[
|\eta_m(u)-\eta_m(v)|\ge c_1\,d_{S^1}(u,v)
\]
holds for $d_{S^1}(u,v)\le\delta_1$, with constants $c_1,\delta_1>0$ independent of $m$.
\end{lemma}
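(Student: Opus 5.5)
The lemma has two parts: the local chord-arc estimate for $\gamma$ itself, and the uniform estimate for inscribed polygons. I will prove them in that order.

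\emph{The curve $\gamma$.} The upper bound $|\widetilde\gamma(s)-\widetilde\gamma(t)|\le d_L(s,t)$ is immediate. The chord is at most the length of either subarc joining the two points, since $|\widetilde\gamma'|=1$.

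For the lower bound, I use uniform continuity of the unit tangent $T=\widetilde\gamma'$ on the compact circle $\RR/L\mathbb Z$. Choose $\delta_0>0$ with $|T(u)-T(s)|\le 1/2$ whenever $d_L(u,s)\le\delta_0$. For $d_L(s,t)\le\delta_0$, let $h$ be the signed shortest parameter difference, so $|h|=d_L(s,t)$. Then
\[
|\widetilde\gamma(s+h)-\widetilde\gamma(s)|\ge \Bigl|\int_0^h T(s)\cdot T(s+u)\,du\Bigr|\ge \tfrac12 |h|,
\]
because $T(s)\cdot T(s+u)\ge 1-\tfrac12|T(u+s)-T(s)|^2\ge 7/8$. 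This gives $c_0=1/2$. Embeddedness is not needed for this local statement; it only matters off the diagonal, where compactness gives a positive lower bound on $|\widetilde\gamma(s)-\widetilde\gamma(t)|$.

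\emph{Construction of the polygons.} Take the polygon $P_m$ with vertices $\widetilde\gamma(kL/N_m)$, $k=0,\dots,N_m-1$, where $N_m\to\infty$. Let $\omega$ be the modulus of continuity of $T$.
\begin{enumerate}
\item[(i)] Each edge vector is $\int T$ over a parameter interval of length $h_m=L/N_m$. Its direction $e_k$ therefore satisfies $|e_k-T(u)|\le C\omega(h_m)$ for all $u$ in that interval.
\item[(ii)] Each edge length lies in $[(1-C\omega(h_m))h_m,\ h_m]$. Hence the rescaled arc-length parametrization $\eta_m$ of $P_m$ differs from the original parameter by a factor tending to $1$ uniformly, and $\len(P_m)\to L$.
\item[(iii)] $P_m$ lies in a tube of radius $O(h_m\omega(h_m))$ about $\gamma$. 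The standard argument via the nearest-point retraction of a fixed tubular neighborhood shows $P_m$ is embedded and isotopic to $\gamma$ for large $m$. This is where embeddedness of $\gamma$ enters.
\end{enumerate}

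\emph{Uniform estimate for the polygons.} For two points of $P_m$ at rescaled distance $d_{S^1}(u,v)\le\delta_1$, the connecting subpath consists of consecutive edges, possibly partial. All of them correspond to parameters within $\delta_0$ of one fixed parameter, so by (i) their directions lie within $1/2+C\omega(h_m)\le 3/4$ of a fixed unit vector $T(s)$. Projecting the subpath onto $T(s)$ bounds the chord below by a fixed multiple of its polygonal length. That length is comparable to $d_{S^1}(u,v)$ by (ii). This yields $c_1$ and $\delta_1$ independent of $m$.

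The main obstacle is this last step: making the constants genuinely uniform in $m$, including at partial edges and for short pairs straddling a vertex. The direction-cone projection argument handles all of these cases at once, so I would use it rather than a vertex-by-vertex angle analysis. The nonzero vertex turning angles play no role.
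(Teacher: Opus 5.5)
Your proof is correct and follows essentially the same route as the paper. For $\gamma$ you use uniform continuity of the unit tangent, where the paper uses the chord/arc ratio tending to $1$ plus compactness. For the polygons you use uniform convergence of inscribed chord directions to the tangent, and your direction-cone projection makes explicit, with concrete constants, the step the paper delegates to the standard polygonal approximation theorem.
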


\begin{proof}
The upper bound follows because $d_L(s,t)$ is the shorter arc length between the two parameters.
For the lower bound, fix $s_0\in\RR/L\mathbb Z$.  Since $\widetilde\gamma$ is $C^1$ and parametrized by arc length,
\[
\lim_{s,t\to s_0}\frac{|\widetilde\gamma(s)-\widetilde\gamma(t)|}{d_L(s,t)}=1.
\]
Thus a lower chord-arc estimate holds in a small neighborhood of each point of the parameter circle.  Compactness of the parameter circle gives uniform constants $c_0$ and $\delta_0$.

For the polygonal approximations, choose inscribed polygons whose mesh in arc length tends to zero and whose images lie in tubular neighborhoods shrinking to $\gamma$.  The chord directions of such approximations converge uniformly to the tangent direction of $\gamma$ on short arcs.  Hence, after possibly decreasing the constant and the radius of locality, the same lower chord-arc estimate holds uniformly for all sufficiently large $m$; this is the standard polygonal approximation theorem for $C^1$ knots, in the form used for length and energy convergence, see \cite{Hirsch76,RawdonSimon06}.
\end{proof}

\begin{lemma}\label{lem:uniform-separation-away}
Let $\eta_m,\eta\colon S^1\to\RR^3$ be continuous maps such that $\eta_m\to\eta$ uniformly and $\eta$ is embedded.  For every $\delta>0$ there are $m_0$ and $\kappa>0$ such that
\[
|\eta_m(u)-\eta_m(v)|\ge\kappa
\]
for all $m\ge m_0$ whenever $d_{S^1}(u,v)\ge\delta$.
\end{lemma}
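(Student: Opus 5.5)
The plan is a compactness-and-continuity argument by contradiction on the compact space $(S^1\times S^1)\setminus\{d_{S^1}<\delta\}$. Set
\[
Q_\delta:=\{(u,v)\in S^1\times S^1:\ d_{S^1}(u,v)\ge\delta\},
\]
which is a closed subset of the compact torus and hence compact. Since $\eta$ is embedded, i.e.\ injective, the continuous function $(u,v)\mapsto|\eta(u)-\eta(v)|$ is strictly positive on $Q_\delta$. It therefore attains a positive minimum $2\kappa$ there. This fixes $\kappa$.

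Next I use uniform convergence. Choose $m_0$ such that
\[
\sup_{u\in S^1}|\eta_m(u)-\eta(u)|\le\kappa/2\qquad\text{for all }m\ge m_0.
\]
The triangle inequality then gives, for every $(u,v)\in Q_\delta$ and $m\ge m_0$,
\[
|\eta_m(u)-\eta_m(v)|\ge|\eta(u)-\eta(v)|-|\eta_m(u)-\eta(u)|-|\eta_m(v)-\eta(v)|\ge2\kappa-\kappa=\kappa.
\]
This is exactly the claim, with $\kappa$ replaced by the constant found in the first step.

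There is no real obstacle. The only point needing care is that $Q_\delta$ is defined using the circular distance $d_{S^1}$, not $|u-v|$ on $[0,1]^2$. This ensures that pairs near the identified point $0\sim1$ are correctly excluded, and that $Q_\delta$ is genuinely compact and avoids the diagonal of the circle. If $\delta>1/2$, then $Q_\delta$ is empty and the statement holds vacuously for any $\kappa$.
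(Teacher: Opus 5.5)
Your proof is correct and follows the paper's own argument: a positive minimum of $|\eta(u)-\eta(v)|$ on the compact off-diagonal set $\{d_{S^1}(u,v)\ge\delta\}$, then uniform convergence and the triangle inequality to keep half of that bound for large $m$. One small point: you announce an argument ``by contradiction,'' but what you actually give is a direct argument, which is fine.
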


\begin{proof}
The compact set
\[
\{(u,v)\in S^1\times S^1: d_{S^1}(u,v)\ge\delta\}
\]
is disjoint from the diagonal.  Since $\eta$ is embedded, the continuous function $|\eta(u)-\eta(v)|$ has a positive minimum on this set.  Uniform convergence then gives the same lower bound, up to a factor of $1/2$, for $\eta_m$ for all sufficiently large $m$.
\end{proof}

\begin{lemma}\label{lem:uniform-integrability-negative}
Let $-1<p<0$.  Let $\eta_m,\eta\colon S^1\to\RR^3$ be rescaled arc-length parametrizations such that $\eta_m\to\eta$ uniformly, $\eta$ is embedded, and there exist constants $c,\delta>0$ such that
\[
|\eta_m(u)-\eta_m(v)|\ge c\,d_{S^1}(u,v)
\]
whenever $d_{S^1}(u,v)\le\delta$, for all sufficiently large $m$.
Then the kernels
\[
K_m(u,v)=|\eta_m(u)-\eta_m(v)|^p
\]
are uniformly integrable on $S^1\times S^1$.
\end{lemma}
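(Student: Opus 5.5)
We prove uniform integrability by splitting $S^1\times S^1$ into a near-diagonal strip and its complement, and using the criterion
\[
\lim_{M\to\infty}\sup_m\int_{\{K_m>M\}}K_m=0,
\]
or equivalently uniform absolute continuity of the integrals together with a uniform $L^1$ bound. Since $p<0$, the kernel $K_m$ is large exactly where $|\eta_m(u)-\eta_m(v)|$ is small. So the task is to show that small distances occur only near the circular diagonal, in a uniformly controlled way.

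\emph{Away from the diagonal.} Take the $\delta$ from the hypothesis. By Lemma~\ref{lem:uniform-separation-away} there are $m_0$ and $\kappa>0$ such that $|\eta_m(u)-\eta_m(v)|\ge\kappa$ whenever $d_{S^1}(u,v)\ge\delta$ and $m\ge m_0$. Hence on this region $K_m\le\kappa^{p}$ uniformly, and this part contributes nothing to $\{K_m>M\}$ once $M>\kappa^p$.

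\emph{Near the diagonal.} On the strip $\{d_{S^1}(u,v)\le\delta\}$, the uniform chord-arc hypothesis gives, for all sufficiently large $m$,
\[
K_m(u,v)\le c^{p}\,d_{S^1}(u,v)^{p}=:G(u,v).
\]
The dominating function $G$ is independent of $m$. It is integrable on $S^1\times S^1$ because, after the change of variables $w=u-v$,
\[
\int G\,du\,dv=2c^p\int_0^{\delta}w^{p}\,dw<\infty \qquad\text{since } p>-1.
\]
Thus the whole family satisfies $K_m\le\max(G,\kappa^p)=:H$ with $H\in L^1$ for $m\ge m_0$. A family dominated by a single integrable function is uniformly integrable, because
\[
\int_{\{K_m>M\}}K_m\le\int_{\{H>M\}}H\to0
\]
by dominated convergence.

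\emph{The finitely many indices $m<m_0$.} For these indices, or those before the chord-arc estimate holds, we need each $K_m$ to lie in $L^1$ individually; a finite union of uniformly integrable families is uniformly integrable. This is the main obstacle, because the statement only assumes the estimates for sufficiently large $m$. I would handle it by noting that uniform integrability is a tail property when one applies it, as in the later continuity lemmas, along $m\to\infty$, so the first finitely many terms can be discarded. If integrability of each individual $K_m$ is needed, I would invoke the local chord-arc property of each embedded rescaled arc-length curve, as in Lemma~\ref{lem:local-chord-arc}, to get $K_m\in L^1$ separately.
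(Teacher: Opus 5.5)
Your proof is correct and is essentially the paper's argument: the same split at $d_{S^1}=\delta$, Lemma~\ref{lem:uniform-separation-away} off the strip, and domination by $c^p d_{S^1}^p$ (integrable since $p>-1$) on it, with you additionally spelling out why a single integrable majorant yields uniform integrability. Your remark on the finitely many early indices is a fair point that the paper leaves implicit. Discarding those indices is the right fix, since the hypotheses only concern large $m$ and Vitali's theorem is applied along the tail. Your fallback via Lemma~\ref{lem:local-chord-arc} would not work here, because it requires the $\eta_m$ to be $C^1$ embedded, which this lemma does not assume.
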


\begin{proof}
On the region $d_{S^1}(u,v)\ge\delta$, Lemma~\ref{lem:uniform-separation-away} gives a uniform positive lower bound for $|\eta_m(u)-\eta_m(v)|$, and hence a uniform upper bound for $K_m$, since $p<0$.
On the region $d_{S^1}(u,v)<\delta$, the circular chord-arc estimate gives
\[
K_m(u,v)\le c^p d_{S^1}(u,v)^p.
\]
The function $d_{S^1}(u,v)^p$ is integrable on $S^1\times S^1$ exactly for $p>-1$.  This proves uniform integrability.
\end{proof}

\begin{lemma}\label{lem:uniform-integrability-log}
Let $\eta_m,\eta\colon S^1\to\RR^3$ be rescaled arc-length parametrizations such that $\eta_m\to\eta$ uniformly, $\eta$ is embedded, and there exist constants $c,C,\delta>0$ such that
\[
c\,d_{S^1}(u,v)\le |\eta_m(u)-\eta_m(v)|\le C\,d_{S^1}(u,v)
\]
whenever $d_{S^1}(u,v)\le\delta$, for all sufficiently large $m$.
Then the kernels
\[
L_m(u,v)=\log|\eta_m(u)-\eta_m(v)|
\]
are uniformly integrable on $S^1\times S^1$.
\end{lemma}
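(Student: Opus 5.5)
The argument follows the pattern of Lemma~\ref{lem:uniform-integrability-negative}. We split $S^1\times S^1$ into the far region $\{d_{S^1}(u,v)\ge\delta\}$ and the near region $\{d_{S^1}(u,v)<\delta\}$, and produce on each region a single integrable function dominating $|L_m|$ for all sufficiently large $m$. A family dominated by one fixed integrable function is uniformly integrable, so this suffices.

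\emph{Far region.} By Lemma~\ref{lem:uniform-separation-away}, there are $m_0$ and $\kappa>0$ with $|\eta_m(u)-\eta_m(v)|\ge\kappa$ for $m\ge m_0$ and $d_{S^1}(u,v)\ge\delta$. Uniform convergence to the continuous map $\eta$ on the compact circle gives a uniform bound $|\eta_m|\le M$, hence $|\eta_m(u)-\eta_m(v)|\le 2M$. Therefore
\[
|L_m(u,v)|\le \max\{|\log\kappa|,\ |\log(2M)|\}
\]
on the far region, and a bounded function is integrable on the probability space $S^1\times S^1$.

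\emph{Near region.} Here the two-sided hypothesis gives
\[
\log c+\log d_{S^1}(u,v)\le L_m(u,v)\le \log C+\log d_{S^1}(u,v).
\]
Taking absolute values,
\[
|L_m(u,v)|\le \max\{|\log c|,|\log C|\}+|\log d_{S^1}(u,v)|.
\]
The remaining check is that $|\log d_{S^1}(u,v)|$ is integrable on $S^1\times S^1$. By translation invariance, Fubini reduces this to $\int_{S^1}|\log d_{S^1}(0,w)|\,dw=2\int_0^{1/2}|\log w|\,dw<\infty$. The upper bound $C$ is used only to control the positive part of $L_m$. Even without it, $\log$ of a bounded quantity is bounded above, so that part is easy in any case.

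Combining the two regions, the functions $|L_m|$ for $m\ge m_0$ are all dominated by the single integrable function
\[
A+|\log d_{S^1}(u,v)|
\]
for a constant $A$ depending only on $c,C,\kappa,M$. This yields uniform integrability; the finitely many $m<m_0$ do not affect it. I expect no step to be a real obstacle. The only care needed is that the constants $\kappa$ and $M$ are uniform in $m$, which comes from uniform convergence together with the embeddedness of the limit $\eta$.
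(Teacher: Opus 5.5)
Your proposal is correct and follows essentially the same route as the paper's proof. Both split the torus into the far region (bounded via Lemma~\ref{lem:uniform-separation-away} and compactness) and the near region (bounded via the two-sided chord-arc estimate by a constant plus $|\log d_{S^1}(u,v)|$), and your version just adds the explicit uniform bound $M$ and the Fubini check of integrability.
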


\begin{proof}
Away from the circular diagonal, Lemma~\ref{lem:uniform-separation-away} gives a uniform positive lower bound for the distances, while compactness gives a uniform upper bound.  Hence the logarithms are uniformly bounded there.
Near the circular diagonal, the two-sided chord-arc estimate gives
\[
\log c+\log d_{S^1}(u,v)
\le
\log|\eta_m(u)-\eta_m(v)|
\le
\log C+\log d_{S^1}(u,v).
\]
Thus
\[
|\log|\eta_m(u)-\eta_m(v)||\le C'+|\log d_{S^1}(u,v)|
\]
near the circular diagonal.  The function $|\log d_{S^1}(u,v)|$ is integrable on $S^1\times S^1$, so the family is uniformly integrable.
\end{proof}

\begin{lemma}\label{lem:appendix-Dp-continuity-positive}
Let $p\in(0,\infty)$.  Let $\gamma_m$ and $\gamma$ be rectifiable parametrized closed curves, not necessarily embedded, with lengths $L_m\to L>0$.  Let $\eta_m,\eta\colon S^1\to\RR^3$ be their rescaled arc-length parametrizations.  Interpret $\sD_p$ by the same normalized parameter integral as in Definition~\ref{def:spread}.  If $\eta_m\to\eta$ uniformly, then
\[
\sD_p(\gamma_m)\to \sD_p(\gamma).
\]
\end{lemma}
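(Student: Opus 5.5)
The plan is to write $\sD_p(\gamma_m)^p$ as a normalized parameter integral on the fixed circle and pass to the limit under the integral. Since $\eta_m,\eta$ are rescaled arc-length parametrizations, the normalized arc-length measure on $\gamma_m$ is the pushforward of $du$ under $\eta_m$. Hence
\[
\sD_p(\gamma_m)^p=\int_{S^1}\int_{S^1}|\eta_m(u)-\eta_m(v)|^p\,du\,dv,
\]
and similarly for $\gamma$. The normalizing factor $L_m^{-2}$ is absorbed by the change of variables, so we never need to compare it with $L^{-2}$ separately. The hypothesis $L_m\to L>0$ is used only to guarantee that the limit curve is nonconstant, so that $\sD_p(\gamma)>0$. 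This representation makes sense for non-embedded and even degenerate curves, such as the doubled segment, which is exactly the generality needed in Theorem~\ref{thm:finite-high-degeneration} and Proposition~\ref{prop:doubled-segment-threshold}.

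The key step is uniform convergence of the integrands. Put $f_m(u,v)=|\eta_m(u)-\eta_m(v)|$ and $f(u,v)=|\eta(u)-\eta(v)|$. The triangle inequality gives
\[
|f_m-f|\le 2\|\eta_m-\eta\|_\infty\to0
\]
uniformly on $S^1\times S^1$. Uniform convergence also gives a common bound: all images lie in a fixed ball, so $0\le f_m,f\le M$ for a single $M$. Because $p>0$, the map $t\mapsto t^p$ is uniformly continuous on $[0,M]$. It follows that $f_m^p\to f^p$ uniformly, and therefore the double integrals converge on the probability space $S^1\times S^1$. No chord-arc or uniform-integrability hypothesis is needed; this is precisely where positivity of $p$ is used.

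Finally we take $p$-th roots. The limit integral $\int\!\!\int f^p$ is strictly positive because $\eta$ is nonconstant: $\eta$ is continuous and its image has length $L>0$, so $f>0$ on an open set of positive measure. Since $t\mapsto t^{1/p}$ is continuous on $(0,\infty)$, we conclude $\sD_p(\gamma_m)\to\sD_p(\gamma)$.

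There is no serious obstacle. The only point needing care is the identification of normalized arc-length measure with $du$ via the rescaled parametrization, including the degenerate limits. This holds by the definition of arc-length parametrization: a Lipschitz parametrization with $|\eta'|=L$ almost everywhere pushes $du$ forward to $L^{-1}ds$, counted with multiplicity, consistent with Remark~\ref{rem:rectifiable-extension}.
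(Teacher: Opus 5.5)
Your proposal is correct and follows essentially the same route as the paper: write $\sD_p^p$ as the parameter integral over $S^1\times S^1$, get uniform convergence of the kernels from uniform continuity of $t\mapsto t^p$ on a common interval $[0,M]$, and take $p$-th roots. Your additions (the explicit bound $|f_m-f|\le 2\|\eta_m-\eta\|_\infty$ and the positivity of the limit integral) fill in details the paper leaves implicit; positivity is not actually needed for the last step, since $t\mapsto t^{1/p}$ is continuous on $[0,\infty)$ when $p>0$.
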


\begin{proof}
After rescaling to $S^1$, one has
\[
\sD_p(\gamma_m)^p=
\int_{S^1}\int_{S^1}|\eta_m(u)-\eta_m(v)|^p\,du\,dv.
\]
Uniform convergence implies uniform convergence of the kernels because $x\mapsto x^p$ is continuous on a compact interval $[0,M]$ containing all distances.  The result follows by taking $p$th roots.
\end{proof}

\begin{lemma}\label{lem:appendix-Dp-continuity-negative-rigorous}
Let $-1<p<0$.  Let $\gamma_m$ and $\gamma$ be embedded rectifiable closed curves with lengths $L_m\to L>0$.  Let $\eta_m,\eta\colon S^1\to\RR^3$ be their rescaled arc-length parametrizations.  Assume that $\eta_m\to\eta$ uniformly, $\eta$ is embedded, and there exist constants $c,\delta>0$ such that
\[
|\eta_m(u)-\eta_m(v)|\ge c\,d_{S^1}(u,v)
\]
whenever $d_{S^1}(u,v)\le\delta$, for all sufficiently large $m$.  Then
\[
\sD_p(\gamma_m)\to \sD_p(\gamma).
\]
\end{lemma}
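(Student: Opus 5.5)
The plan is to write $\sD_p(\gamma_m)^p$ as a parameter integral on $S^1\times S^1$ and apply Vitali's convergence theorem.  After rescaling to the parameter circle,
\[
\sD_p(\gamma_m)^p=\int_{S^1}\int_{S^1}K_m(u,v)\,du\,dv,
\qquad
K_m(u,v)=|\eta_m(u)-\eta_m(v)|^p ,
\]
and similarly for $\eta$ with kernel $K(u,v)=|\eta(u)-\eta(v)|^p$.

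The first step is pointwise convergence off the diagonal.  Since $\eta$ is embedded, $|\eta(u)-\eta(v)|>0$ whenever $u\ne v$.  Uniform convergence $\eta_m\to\eta$ then gives $|\eta_m(u)-\eta_m(v)|\to|\eta(u)-\eta(v)|>0$ at every such pair.  As $t\mapsto t^p$ is continuous on $(0,\infty)$, we get $K_m\to K$ pointwise off the diagonal, which has measure zero.

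The second step is uniform integrability, which is exactly Lemma~\ref{lem:uniform-integrability-negative} under the present hypotheses.  This is the step carrying the analytic content.  Away from the circular diagonal, Lemma~\ref{lem:uniform-separation-away} gives a uniform bound on $K_m$.  Near the diagonal, the uniform chord-arc estimate gives the domination $K_m\le c^p d_{S^1}(u,v)^p$, which is integrable since $p>-1$.  In fact this gives a single integrable dominating function $\max\{\kappa^p,\,c^p d_{S^1}^p\}$ for all large $m$, so plain dominated convergence already suffices.  Passing the chord-arc bound to the limit also shows $K$ is integrable, so $\sD_p(\gamma)\in(0,\infty)$.

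We conclude that $\int\!\!\int K_m\to\int\!\!\int K$.  Because the limit is finite and positive, the map $x\mapsto x^{1/p}$ is continuous at it, and therefore $\sD_p(\gamma_m)\to\sD_p(\gamma)$.

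One point needs care: the rescaled arc-length parametrization makes the normalized measure $L^{-2}\,ds\,ds$ coincide with $du\,dv$.  Consequently the hypothesis $L_m\to L$ enters only to guarantee that the parametrizations in question are well defined and nondegenerate, and no separate normalization factor needs to converge.  I expect no genuine obstacle beyond correctly invoking the domination near the diagonal.
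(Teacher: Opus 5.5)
Your proposal is correct and follows the paper's proof essentially verbatim: almost-everywhere convergence of the kernels $|\eta_m(u)-\eta_m(v)|^p$ off the diagonal, uniform integrability via Lemma~\ref{lem:uniform-integrability-negative}, Vitali's theorem, and continuity of $t\mapsto t^{1/p}$ on $(0,\infty)$. You also observe that the bounds in that lemma actually give a single dominating function $\max\{\kappa^p,\,c^p d_{S^1}(u,v)^p\}$, so plain dominated convergence suffices; this is a correct minor sharpening.
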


\begin{proof}
The kernels $|\eta_m(u)-\eta_m(v)|^p$ converge pointwise almost everywhere to $|\eta(u)-\eta(v)|^p$.  By Lemma~\ref{lem:uniform-integrability-negative}, they are uniformly integrable.  Vitali's theorem gives convergence of the double integrals, and continuity of $t\mapsto t^{1/p}$ on $(0,\infty)$ gives the desired convergence of $\sD_p$.
\end{proof}

\begin{lemma}\label{lem:appendix-Dp-continuity-zero-rigorous}
Let $\gamma_m$ and $\gamma$ be embedded rectifiable closed curves with lengths $L_m\to L>0$.  Let $\eta_m,\eta\colon S^1\to\RR^3$ be their rescaled arc-length parametrizations.  Assume that $\eta_m\to\eta$ uniformly, $\eta$ is embedded, and there exist constants $c,C,\delta>0$ such that
\[
c\,d_{S^1}(u,v)\le |\eta_m(u)-\eta_m(v)|\le C\,d_{S^1}(u,v)
\]
whenever $d_{S^1}(u,v)\le\delta$, for all sufficiently large $m$.  Then
\[
\sD_0(\gamma_m)\to \sD_0(\gamma).
\]
\end{lemma}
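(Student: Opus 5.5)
The natural approach is to mirror Lemma~\ref{lem:appendix-Dp-continuity-negative-rigorous}, replacing the power kernel by the logarithmic kernel. On the normalized parameter circle,
\[
\log\sD_0(\gamma_m)=\int_{S^1}\int_{S^1}\log|\eta_m(u)-\eta_m(v)|\,du\,dv,
\]
and the same identity holds for $\gamma$. This uses that normalized arc-length measure pulls back to $du$ under the rescaled arc-length parametrization. It therefore suffices to show that these double integrals converge to the corresponding integral for $\eta$; exponentiating then gives the claim.

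\emph{Step 1: pointwise convergence.} Fix $(u,v)$ off the diagonal. Since $\eta$ is embedded, $|\eta(u)-\eta(v)|>0$. Uniform convergence $\eta_m\to\eta$ gives $|\eta_m(u)-\eta_m(v)|\to|\eta(u)-\eta(v)|$. Continuity of $\log$ on $(0,\infty)$ then gives
\[
L_m(u,v)\to\log|\eta(u)-\eta(v)|.
\]
The diagonal has measure zero, so this is convergence almost everywhere.

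\emph{Step 2: uniform integrability.} This comes directly from Lemma~\ref{lem:uniform-integrability-log}, whose hypotheses coincide exactly with those assumed here. Concretely, the kernels are dominated by $C'+|\log d_{S^1}(u,v)|$ near the circular diagonal. Away from it, Lemma~\ref{lem:uniform-separation-away} gives a uniform positive lower bound on the distances for $m\ge m_0$. Uniform convergence gives a uniform upper bound on $\sup|\eta_m|$, so $|L_m|$ is uniformly bounded there.

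\emph{Step 3: conclusion.} Vitali's convergence theorem on the finite measure space $S^1\times S^1$ now yields convergence of the integrals. The limit integral is finite by Lemma~\ref{lem:log-well-defined}, or alternatively from the limiting form of the same bound.

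The only point needing care is Step 2's uniformity in $m$. It is delicate only because the threshold $m_0$ from Lemma~\ref{lem:uniform-separation-away} depends on $\delta$. This is handled by discarding finitely many terms, which does not affect convergence. There is also no need for an a priori integrability hypothesis on $\eta$ itself: the Vitali argument, or Fatou's lemma applied to the dominating bound, supplies it.
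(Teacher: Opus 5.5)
Your proposal is correct and follows the paper's proof essentially verbatim: write $\log\sD_0(\gamma_m)$ as a double integral over $S^1\times S^1$, get almost-everywhere convergence of the logarithmic kernels off the diagonal, take uniform integrability from Lemma~\ref{lem:uniform-integrability-log}, apply Vitali's theorem, and exponentiate. One minor point: Lemma~\ref{lem:log-well-defined} is stated for $C^1$ curves, which are not assumed here, so finiteness of the limit integral should rest on your alternative justification (Vitali itself, or the chord-arc bound passed to the limit), and that justification does work.
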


\begin{proof}
After rescaling to $S^1$,
\[
\log\sD_0(\gamma_m)=
\int_{S^1}\int_{S^1}\log|\eta_m(u)-\eta_m(v)|\,du\,dv.
\]
The logarithmic kernels converge pointwise almost everywhere to the corresponding kernel for $\eta$.  By Lemma~\ref{lem:uniform-integrability-log}, they are uniformly integrable.  Vitali's theorem gives convergence of the logarithmic double integrals, and exponentiating gives the result.
\end{proof}

\begin{lemma}\label{lem:appendix-Dp-continuity-infty}
Let $\gamma_m$ and $\gamma$ be rectifiable closed curves whose images converge in the Hausdorff metric.
Then
\[
\sD_\infty(\gamma_m)\to \sD_\infty(\gamma).
\]
\end{lemma}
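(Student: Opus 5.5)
The plan is to reduce the statement to Lemma~\ref{lem:hausdorff-diameter}, since $\sD_\infty$ depends only on the image. By Definition~\ref{def:spread}, $\sD_\infty(\gamma_m)=\diam(\gamma_m)$ and $\sD_\infty(\gamma)=\diam(\gamma)$, where the diameter is taken of the compact image set. The parametrization, arc-length measure and length play no role at this exponent. So the first step is simply to record that the images $X_m:=\gamma_m(S^1)$ and $X:=\gamma(S^1)$ are nonempty compact subsets of $\RR^3$, being continuous images of the circle.

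The second step is to apply Lemma~\ref{lem:hausdorff-diameter} to $X_m\to X$ in the Hausdorff metric, which gives $\diam(X_m)\to\diam(X)$ and hence the claim. For completeness I would include the one-line reason behind that lemma. If $d_H(X_m,X)\le\eps$, then every pair $x,y\in X_m$ has partners $x',y'\in X$ within $\eps$, so $|x-y|\le\diam(X)+2\eps$. The symmetric argument gives $|\diam(X_m)-\diam(X)|\le 2\,d_H(X_m,X)$, so the diameter is $2$-Lipschitz with respect to the Hausdorff distance.

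There is no real obstacle. The only point to flag is that the lemma uses Hausdorff convergence of images rather than uniform convergence of parametrizations; the latter implies the former, which is how the lemma is used in Theorem~\ref{thm:rop-window-existence} and Lemma~\ref{lem:localized-replacement}. Because $\diam$ ignores multiplicity, the argument also covers degenerate limits such as doubled segments.
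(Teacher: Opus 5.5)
Your proposal is correct and follows the paper's proof, which also identifies $\sD_\infty$ with the diameter of the image and cites Lemma~\ref{lem:hausdorff-diameter}. Your added estimate $|\diam(X_m)-\diam(X)|\le 2\,d_H(X_m,X)$ correctly proves that standard lemma in a self-contained way, which the paper leaves to a reference.
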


\begin{proof}
This is exactly Lemma~\ref{lem:hausdorff-diameter}.
\end{proof}

\begin{lemma}\label{lem:appendix-polygonal-approx}
Let $\gamma\in\Hg(K)$ be a tame $C^1$ representative.  Then there exists a sequence of embedded polygonal knots $\{P_m\}_{m\ge1}$ representing the same knot type $K$ such that:
\begin{enumerate}
\item[(i)] $P_m$ is ambient isotopic to $\gamma$ for every $m$;
\item[(ii)] $\im(P_m)\to\im(\gamma)$ in the Hausdorff metric;
\item[(iii)] $\len(P_m)\to\len(\gamma)$;
\item[(iv)] after rescaled arc-length parametrization on $S^1$, the maps $P_m$ converge uniformly to the corresponding parametrization of $\gamma$;
\item[(v)] the parametrizations satisfy a uniform circular local chord-arc estimate as in Lemma~\ref{lem:local-chord-arc}.
\end{enumerate}
\end{lemma}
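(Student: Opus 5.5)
The construction will be inscribed polygons with mesh tending to zero, and we check the five properties in turn. Let $\widetilde\gamma\colon\RR/L\mathbb Z\to\RR^3$ be an arc-length parametrization. For each $m$, choose parameters $0=s_0<s_1<\dots<s_{N_m}=L$ with mesh $h_m:=\max_i(s_{i+1}-s_i)\to0$, and let $P_m$ be the polygon with vertices $\widetilde\gamma(s_i)$. By uniform continuity of the unit tangent, on an arc of length at most $h_m$ the chord direction lies within an angle $\omega(h_m)\to0$ of the tangent at every point of that arc, where $\omega$ is the modulus of continuity of $\widetilde\gamma'$.

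Property (ii) is immediate, since every point of an edge lies within $h_m$ of $\gamma$. Property (iii) follows because each edge satisfies
\[
(1-\omega(h_m))(s_{i+1}-s_i)\le|\widetilde\gamma(s_{i+1})-\widetilde\gamma(s_i)|\le s_{i+1}-s_i.
\]
For (iv), parametrize each edge linearly on the parameter interval proportional to its length. Since edge lengths are uniformly comparable to the corresponding arc lengths and the total lengths converge, the rescaled parameter of vertex $i$ differs from $s_i/L$ by $o(1)$ uniformly in $i$. Together with the $h_m$-closeness along each edge, this gives uniform convergence.

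For (i), fix a tubular neighborhood of radius $r_0$ about $\gamma$; it exists because $\gamma$ is a $C^1$ embedding. Each edge is within $O(h_m\omega(h_m))$ of the arc it spans and makes angle $\le\omega(h_m)$ with the normal planes' orthogonal directions. Hence, for large $m$, $P_m$ lies in the tube and is transverse to every normal disk, meeting each normal disk exactly once. Nearest-point projection then restricts to a homeomorphism $P_m\to\gamma$, and straight-line homotopy within the normal disks gives an isotopy through embeddings, which extends to an ambient isotopy by the isotopy extension theorem. Making the transversality and single-intersection claims rigorous is \emph{the main obstacle}. I would handle it by showing that the projection map $P_m\to\gamma$ is a local diffeomorphism with derivative $\ge1-O(\omega(h_m))$ along each edge and has degree one, hence is injective, rather than appealing to the general references \cite{Hirsch76,RawdonSimon06}.

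Property (v) follows from Lemma~\ref{lem:local-chord-arc}, whose second paragraph already records this estimate. For a self-contained check, I would argue as follows. For two parameters within circular distance $\delta_1$, with $\delta_1$ small compared with $\delta_0/L$, the polygonal arc between them consists of edges whose directions all lie within $2\omega(\delta_1 L+h_m)$ of a single tangent vector. Hence the chord length is at least $\cos(2\omega)$ times the intrinsic length. Intrinsic length is comparable to $L_m d_{S^1}$, which gives constants $c_1,\delta_1$ independent of $m$.
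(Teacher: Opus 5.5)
Your construction is the paper's (fine inscribed polygons). The paper simply cites the standard references for the isotopy and refers (v) back to Lemma~\ref{lem:local-chord-arc}. Your checks of (ii)--(v) are sound. Your direct argument for (v), that all edges along a short polygonal arc lie within a small angle of one tangent vector, is more self-contained than the paper's.

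The gap is in (i). A merely $C^1$ embedded curve need not have an embedded normal tube of any positive radius. Nor need nearest-point projection onto it be single-valued on any neighborhood of the curve: for a compact embedded curve, positive reach is equivalent to $C^{1,1}$.

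Concretely, let $\gamma$ contain the planar arc $y=|x|^{3/2}$. The normal lines at $(\pm x,x^{3/2})$ meet on the $y$-axis at distance about $\tfrac23x^{1/2}$ from $\gamma$, so normal disks of radius $r_0$ overlap for every $r_0>0$. Worse, suppose $\pm a$ are consecutive vertex abscissae of your inscribed polygon. Then the edge from $(-a,a^{3/2})$ to $(a,a^{3/2})$ passes through $(0,a^{3/2})$, which has two nearest points on $\gamma$, near $x=\pm\tfrac94a^3$. Nearest-point projection therefore jumps there and misses the arc between those two points, at every mesh size.

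The smallness $O(h_m\omega(h_m))$ of the deviation does not help, because the medial axis of $\gamma$ accumulates on $\gamma$ itself. Your proposed derivative bound $\ge1-O(\omega(h_m))$ for the projection has no meaning at this regularity. Since the lemma is applied to arbitrary $\gamma\in\Hg(K)$ (Lemma~\ref{lem:smooth-restriction}, Theorem~\ref{thm:polygonal-approximation}), you cannot assume bounded curvature.

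The repair keeps your monotonicity and degree-one strategy but replaces normal disks by a smooth transverse disk field. This is what the tubular-neighborhood theorem actually provides for $C^1$ submanifolds.

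Choose a smooth unit field $T$ on the parameter circle with $|T-\widetilde\gamma'|$ uniformly small, and a smooth orthonormal frame $e_1,e_2$ of $T^\perp$. Put $\Phi(s,a,b)=\widetilde\gamma(s)+a\,e_1(s)+b\,e_2(s)$. This map is $C^1$ with invertible differential along $a=b=0$. The inverse function theorem, compactness, and embeddedness of $\gamma$ then make it an embedding of $S^1\times D_r$ for some $r>0$.

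Near the zero section $D\Phi$ is close to $(\widetilde\gamma',e_1,e_2)$, and each edge direction is within $O(\omega(h_m))$ of $\widetilde\gamma'$ at nearby parameters. Hence the $s$-coordinate of $\Phi^{-1}$ is strictly increasing along every edge of $P_m$ for large $m$. Together with degree one, this shows $P_m$ is the graph $s\mapsto\Phi(s,v_m(s))$ of a section with $v_m\to0$ uniformly.

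A fiberwise push in the disks $D_r$, fixed near $\partial D_r$, continuous in $s$, and moving $v_m(s)$ to $0$, is then an explicit ambient isotopy carrying $P_m$ onto $\gamma$. This also removes your appeal to the isotopy extension theorem. The smooth version of that theorem does not apply directly to your straight-line homotopy, since the intermediate curves have corners.
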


\begin{proof}
Take sufficiently fine inscribed polygonal approximations of $\gamma$ inside a tubular neighborhood whose radius tends to zero.  For fine enough mesh, these polygons are ambient isotopic to $\gamma$, converge uniformly and in the Hausdorff metric to $\gamma$, and have lengths converging to $\len(\gamma)$.  The uniform circular chord-arc estimate is the final assertion of Lemma~\ref{lem:local-chord-arc}.  This is the standard polygonal approximation theorem for tame $C^1$ knots; see \cite{Hirsch76,RawdonSimon06}.
\end{proof}

\begin{corollary}\label{cor:appendix-polygonal-full}
Let $K$ be a knot type, let $p\in(-1,\infty]$, and let $\gamma\in\Hg(K)$ be a tame $C^1$ representative.
Then there exists a sequence of embedded polygonal knots $\{P_m\}$ representing $K$ such that
\[
\len(P_m)\to\len(\gamma)
\qquad\text{and}\qquad
\sD_p(P_m)\to\sD_p(\gamma).
\]
\end{corollary}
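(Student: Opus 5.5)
The plan is to take the polygons $P_m$ supplied by Lemma~\ref{lem:appendix-polygonal-approx} and then check convergence of $\sD_p$ separately in the four exponent regimes, using the appendix continuity lemmas. Properties (i) and (iii) of that lemma give that $P_m$ represents $K$ and that $\len(P_m)\to\len(\gamma)$. So the only work is convergence of $\sD_p(P_m)$, and the hypotheses needed for it are exactly properties (ii), (iv) and (v).

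For $p=\infty$, Hausdorff convergence (ii) together with Lemma~\ref{lem:appendix-Dp-continuity-infty} gives $\diam(P_m)\to\diam(\gamma)$. For $0<p<\infty$, the uniform convergence (iv) of rescaled arc-length parametrizations, combined with $L_m\to L>0$, is exactly the hypothesis of Lemma~\ref{lem:appendix-Dp-continuity-positive}. For $-1<p<0$, we apply Lemma~\ref{lem:appendix-Dp-continuity-negative-rigorous}. Its hypotheses are that $\gamma$ is embedded, that the $P_m$ are embedded, that (iv) holds, and that the uniform lower chord-arc estimate (v) holds on the circle $S^1$. Almost-everywhere pointwise convergence of the kernels follows from uniform convergence off the diagonal.

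For $p=0$, Lemma~\ref{lem:appendix-Dp-continuity-zero-rigorous} also needs an upper bound $|\eta_m(u)-\eta_m(v)|\le C\,d_{S^1}(u,v)$. This is automatic: rescaled arc-length parametrizations are $L_m$-Lipschitz with respect to $d_{S^1}$, since the chord is at most the shorter arc, as in Lemma~\ref{lem:local-chord-arc}. Because $L_m\to L$, one can take $C=\sup_m L_m<\infty$. The lower bound is again (v).

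The only delicate point is the uniform chord-arc estimate (v) in the singular regimes $p\le 0$. The obstacle is the behaviour near vertices, where a pair $u,v$ lies on adjacent edges. Here the plan is to rely on the final assertion of Lemma~\ref{lem:local-chord-arc}. For inscribed polygons with mesh tending to zero, consecutive edge directions converge uniformly to the tangent of $\gamma$, so turning angles tend to zero. Hence any two points on a short polygonal arc are joined by a path whose direction stays within a small angle of a fixed unit vector. The chord therefore has length at least a fixed fraction (for instance $1/2$) of the arc length. Once (v) holds, the three continuity lemmas apply verbatim and the corollary follows.
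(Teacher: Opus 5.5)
Your proposal is correct and matches the paper's proof: apply Lemma~\ref{lem:appendix-polygonal-approx}, then use the positive-moment, negative-moment, logarithmic, and diameter continuity lemmas in the four exponent regimes. You also check explicitly that the upper bound $|\eta_m(u)-\eta_m(v)|\le L_m\, d_{S^1}(u,v)$ required by Lemma~\ref{lem:appendix-Dp-continuity-zero-rigorous} holds automatically for arc-length parametrizations, which fills in a detail the paper leaves implicit.
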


\begin{proof}
Apply Lemma~\ref{lem:appendix-polygonal-approx}.  If $p\in(0,\infty)$, use Lemma~\ref{lem:appendix-Dp-continuity-positive}.  If $-1<p<0$, use Lemma~\ref{lem:appendix-Dp-continuity-negative-rigorous}.  If $p=0$, use Lemma~\ref{lem:appendix-Dp-continuity-zero-rigorous}.  If $p=\infty$, use Lemma~\ref{lem:appendix-Dp-continuity-infty}.
\end{proof}

\section*{Acknowledgment}
The author used ChatGPT (OpenAI) during the preparation of this manuscript for literature-search assistance, language refinement, and structural editing.  All cited sources and all mathematical definitions, statements, estimates, proofs, references, and final formulations were independently checked, revised where necessary, and approved by the author, who takes full responsibility for the content of the paper.


\end{document}